\documentclass[a4paper,oneside,11pt]{article}

\usepackage[T1]{fontenc}             
\usepackage[french,english]{babel}           %
\usepackage[utf8]{inputenc}        
\usepackage{amsmath,amssymb,dsfont}  
\usepackage{textcomp}                
\usepackage{listings,multicol}       
\usepackage{graphicx,color}          
\usepackage{wrapfig}                 
\usepackage[rightcaption]{sidecap}   
\usepackage{fullpage}                
\usepackage{a4wide}                  
\usepackage{amsthm}
\usepackage{ulem}                    
\usepackage{fancyhdr}                
\usepackage{subeqnarray}
\usepackage{graphicx}
\usepackage{caption}
\usepackage{subcaption}
\usepackage{rotating}
\usepackage{pdflscape}
\usepackage{enumerate}

\setlength{\parindent}{10pt}      


\pagestyle{fancy} 
\rhead{}          
\lhead{}		  
\lfoot{}          
\rfoot{}          

\setlength{\parindent}{0pt}


\newtheorem{thm}{Theorem}[section]
\newtheorem{prop}{Proposition}[section]
\newtheorem{lm}{Lemma}[section]
\newtheorem{coro}{Corollary}[section]

\theoremstyle{definition}

\theoremstyle{remark}



\newcommand{\pa}[1]{\ensuremath{\left( #1 \right)}}
\newcommand{\cro}[1]{\ensuremath{\left[ #1 \right]}}
\newcommand{\ac}[1]{\ensuremath{\left\{ #1 \right\}}}
\newcommand{\abs}[1]{\ensuremath{\left| #1 \right|}}



\newcommand{\argmin}{\operatornamewithlimits{argmin}}


\newcommand{\R}{\ensuremath{\mathds{R}}}

\newcommand{\X}{\ensuremath{\mathds{X}}}

\renewcommand{\L}{\ensuremath{\mathds{L}}}
\newcommand{\calX}{\ensuremath{\mathcal{X}}}

\newcommand{\hyp}{\ensuremath{\mathcal{H}}}


\newcommand{\ds}[1]{\ensuremath{\mathds{#1}}} 			
\newcommand{\mc}[1]{\ensuremath{\mathcal{#1}}}          
\newcommand{\mfk}[1]{\ensuremath{\mathfrak{#1}}}		


\newcommand{\proba}[1]{\ensuremath{\mathds{P}\!\left(#1\right)}}			
\newcommand{\esp}[1]{\ensuremath{\mathds{E}\left[  #1 \right]}} 		
\newcommand{\Estar}[1]{\ensuremath{\mathds{E}^*\!\!\left[  #1 \right]}} 
\newcommand{\esps}[2]{\ensuremath{\mathds{E}_{#1}\!\left[ #2 \right]}}	

\newcommand{\var}[1]{\ensuremath{\operatorname{Var}\!\left(  #1 \right)}} 		
\newcommand{\Vstar}[1]{\ensuremath{\operatorname{Var}^*\!\!\left(  #1 \right)}}
\newcommand{\vars}[2]{\ensuremath{\operatorname{Var}_{#1}\!\left(  #2 \right)}} 		
\newcommand{\1}[1]{\ensuremath{\mathds{1}_{ #1}}}		

\newcommand{\PP}{\ensuremath{\mathds{P}}}


\newcommand{\norm}[1]{\| #1 \|}
\newcommand{\perm}{\ensuremath{{\pi_n}}}
\newcommand{\rperm}{\ensuremath{{\Pi_n}}}

\newcommand{\cv}[1]{\underset{#1}{\longrightarrow}}

\normalem 

\newcommand{\indep}{\ensuremath{\perp\!\!\!\perp}}
\newcommand{\Sn}[1]{\ensuremath{\mathfrak{S}_{#1}}}

\definecolor{orange}{cmyk}{0,0.5,1,0.3}

\newcommand{\ca}{\ensuremath{\mfk{a}}}
\newcommand{\cc}{\ensuremath{\mfk{c}}}

\newcommand{\med}[1]{\operatorname{med}\pa{#1}}

     
\begin{document}

\title{Concentration inequalities for randomly permuted sums}
\author{M\'elisande Albert\thanks{Institut de Mathématiques de Toulouse ; UMR5219. 
Université de Toulouse ; CNRS.
INSA IMT, F-31077 Toulouse, France.}}
\date{}
\maketitle

\renewcommand{\abstractname}{\vspace{-1cm}}
\begin{abstract} 
Initially motivated by the study of the non-asymptotic properties of non-parametric tests based on permutation methods, concentration inequalities for uniformly permuted sums have been largely studied in the literature. Recently, Delyon et al. proved a new Bernstein-type concentration inequality based on martingale theory. 
This work presents a new proof of this inequality based on the fundamental inequalities for random permutations of Talagrand. The idea is to first obtain a rough inequality for the square root of the permuted sum, and then, iterate the previous analysis and plug this first inequality to obtain a general concentration of permuted sums around their median. Then, concentration inequalities around the mean are deduced. This method allows us to obtain the Bernstein-type inequality up to constants, and, in particular, to recovers the Gaussian behavior of such permuted sums under classical conditions encountered in the literature. 
Then, an application to the study of the second kind error rate of permutation tests of independence is presented. 
\end{abstract}

\bigskip

\noindent{\bf Mathematics Subject Classification:} 60E15, 60C05.

\medskip

\noindent{\bf Keywords:} Concentration inequalities, random permutations.

\section{Introduction and motivation}
\label{sct:intro}

This article presents concentration inequalities for randomly permuted sums defined by 
$Z_n=\sum_{i=1}^{n} a_{i,\rperm(i)},$ where $\ac{a_{i,j}}_{1\leq i,j\leq n}$ are real numbers, and $\rperm$ is a uniformly distributed random permutation of the set $\ac{1,\dots,n}$. Initially motivated by hypothesis testing in the non-parametric framework (see \cite{WaldWolfowitz1944} for instance), such sums have been largely studied from an asymptotic point of view in the literature. 
A first combinatorial central limit theorem is proved by Wald and Wolfowitz in \cite{WaldWolfowitz1944}, in the particular case when the real numbers $a_{i,j}$ are of a product form $b_i \times c_j$, under strong assumptions that have been released for instance by Noether \cite{Noether1949}. 
Then, Hoeffding obtains stronger results in such product case, and generalizes those results to not necessarily product type real terms $a_{i,j}$ in \cite{Hoeffding1951}. More precisely, he considers 
\begin{equation}
\label{eq:def_dij}
d_{i,j} = a_{i,j} - \frac{1}{n} \sum_{k=1}^n a_{k,j} -\frac{1}{n} \sum_{l=1}^n a_{i,l} + \frac{1}{n^2} \sum_{k,l=1}^n a_{k,l}. 
\end{equation}
In particular, $\var{Z_n} = \frac{1}{n-1}\sum_{i=1}^n d_{i,j}^2$. 
Then he proves (see \cite[Theorem 3]{Hoeffding1951}) that, if 
\begin{equation}
\label{eq:cond_Hoeffding-mieux}
\lim_{n\to+\infty} \frac{\frac{1}{n}\sum_{1\leq i,j \leq n} d_{i,j}^r}{\pa{\frac{1}{n}\sum_{i,j=1}^n d_{i,j}^2}^{r/2}} = 0, \quad \mbox{for some }r>2, 
\end{equation}
then the distribution of $Z_n = \sum_{i=1}^n a_{i,\rperm(i)}$ is asymptotically normal, that is, for all $x$ in $\R$, 
$$ \lim_{n\to +\infty} \proba{Z_n - \esp{Z_n} \leq x \sqrt{\var{Z_n}}} = \frac{1}{\sqrt{2\pi}} \int_{-\infty}^x e^{-\frac{y^2}{2}} dy.$$
He also considers a stronger (in the sense that it implies \eqref{eq:cond_Hoeffding-mieux}), but simpler condition in \cite[Theorem 3]{Hoeffding1951}, precisely 
\begin{equation}
\label{eq:cond_Hoeffding-pareil}
\frac{\max_{1\leq i,j\leq n} \ac{\abs{d_{i,j}}}}{\sqrt{\frac{1}{n} \sum_{i,j=1}^n d_{i,j}^2}}\cv{n\to+\infty} 0, 
\end{equation}
under which such an asymptotic Gaussian limit holds. 
Similar results have been obtained later, for instance by Motoo \cite{Motoo1956}, under the following Lindeberg-type condition that is for all $\varepsilon >0$, 
\begin{equation}
\label{eq:cond_Lindeberg}
\lim_{n\to+\infty} \sum_{1\leq i,j\leq n} \pa{\frac{d_{i,j}}{d}}^2 \1{\abs{\frac{d_{i,j}}{d}}>\varepsilon} = 0,
\end{equation} 
where $d^2 = n^{-1} \sum_{1\leq i,j\leq n} d_{i,j}^2$. 
In particular, he proves in \cite{Motoo1956} that such Lindeberg-type condition is weaker than Hoeffding's ones in the sense that \eqref{eq:cond_Lindeberg} is implied by \eqref{eq:cond_Hoeffding-mieux} (and thus by \eqref{eq:cond_Hoeffding-pareil}). A few years later, Hájek \cite{Hajek1961} proves in the product case, that the condition \eqref{eq:cond_Lindeberg} is in fact necessary. 
A simpler proof of the sufficiency of the Lindeberg-type condition is given by Schneller \cite{Schneller1988} based on Stein's method. 

Afterwards, the next step was to study the convergence of the conditional distribution when the terms $a_{i,j}$ in the general case, or $b_i\times c_j$ in the product case, are random. Notably, Dwass studies in \cite{Dwass1955} the limit of the randomly permuted sum in the product case, where only the $c_j$'s are random, and proves that the conditional distribution given the $c_j$'s converges almost surely (a.s.) to a Gaussian distribution. Then, Shapiro and Hubert \cite{ShapiroHubert1979} generalized this study to weighted $U$-statistics of the form $\sum_{i\neq j}b_{i,j} h(X_i,X_j)$ where the $X_i$'s are independent and identically distributed (i.i.d.) random variables. In a first time, they show some a.s. asymptotic normality of this statistic. In a second time, they complete Jogdeo's \cite{Jogdeo1968} work in the deterministic case, proving asymptotic normality of permuted statistics based on the previous weighted $U$-statistic. More precisely, they consider the rank statistic $\sum_{i\neq j}b_{i,j} h(X_{R_i},X_{R_j})$, where $R_i$ is the rank of $V_i$ in a sample $V_1,\dots,V_n$ of i.i.d. random variables with a continuous distribution function. In particular, notice that considering such rank statistics is equivalent to considering uniformly permuted statistics.  
In \cite{AlbertBouretFromontReynaud2015}, the previous combinatorial central limit theorems is generalized to permuted sums of non-i.i.d. random variables $\sum_{i=1}^n Y_{i,\rperm(i)}$, for particular forms of random variables $Y_{i,j}$.
The main difference with the previous results comes from the fact that the random variables $Y_{i,j}$ are not necessarily exchangeable. 

Hence, the asymptotic behavior of permuted sums have been vastly investigated in the literature, allowing to deduce good properties for permutation tests based on such statistics, like the asymptotic size, or the power (see for instance \cite{Romano1989} or \cite{AlbertBouretFromontReynaud2015}). Yet, such results are purely asymptotic, while, in many application fields, such as neurosciences for instance as described in \cite{AlbertBouretFromontReynaud2015}, few exploitable data are available. Hence, such asymptotic results may not be sufficient. This is why a non-asymptotic approach is preferred here, leading to concentration inequalities.

\paragraph{}
Concentration inequalities have been vastly investigated in the literature, and the interested reader can refer to the books of Ledoux \cite{Ledoux2005}, Massart \cite{Massart2007}, or the more recent one of Boucheron, Lugosi, and Massart \cite{BoucheronLugosiMassart2013} for some overall reviews. Yet in many cases, they provide precise tail bounds for well-behaved functions or sums of independent random variables. For instance, let us recall the classical Bernstein inequality stated for instance in \cite[Proposition 2.9 and Corollary 2.10]{Massart2007}.
\begin{thm}[Bernstein's inequality, Massart 2007] 
\label{thm:Bernstein_ineq_Massart07}
Let $X_1,\dots,X_n$ be independent real valued random variables. Assume that there exists some positive numbers $v$ and $c$ such that 
$$
\sum_{i=1}^n \esp{X_i^2}\leq v,
$$ 
and for all integers $k\geq 3$, 
$$
\sum_{i=1}^n \esp{{(X_i)}_+^k} \leq \frac{k!}{2} vc^{k-2},
$$
where $(\cdot)_+=\max\{\cdot,0\}$ denotes the positive part.  

Let $S =\sum_{i=1}^n (X_i-\esp{X_i})$, then for every positive $x$, 
\begin{equation}
\label{eq:ineq_Bernstein_stat}
\proba{S\geq \sqrt{2vx} + cx} \leq e^{-x}.
\end{equation}
Moreover, for any positive $x$, 
\begin{equation}
\label{eq:ineq_Bernstein_proba}
\proba{S\geq x} \leq \exp\pa{-\frac{x^2}{2(v+cx)}}.
\end{equation}
\end{thm}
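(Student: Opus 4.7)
The plan is to follow the classical Cramér--Chernoff (Laplace transform) method, which is the standard route for Bernstein-type inequalities.

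\textbf{Step 1: MGF bound per variable.} For each $i$, I would bound the log-Laplace transform of $Y_i := X_i - \esp{X_i}$ for $0<\lambda<1/c$. Writing the Taylor expansion
\[
e^{\lambda X_i} = 1 + \lambda X_i + \sum_{k\geq 2} \frac{\lambda^k X_i^k}{k!},
\]
I would split the sum at $k=2$ (kept exact via $\esp{X_i^2}$) and use the hypothesis $\sum_i \esp{(X_i)_+^k}\leq \frac{k!}{2}vc^{k-2}$ to control the remaining tail. The positive-part assumption is handled by separating $\ac{X_i>0}$ (where $X_i^k = (X_i)_+^k$ and the power series sums to a geometric tail) from $\ac{X_i\leq 0}$ (where the higher-order terms only improve the inequality for odd $k$ and are absorbed into a correction controlled by $\esp{X_i^2}$). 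The target estimate is the classical one,
\[
\log \esp{e^{\lambda Y_i}} \leq \frac{\lambda^2 \esp{X_i^2}/2}{1-c\lambda},
\]
obtained via $\log(1+u)\leq u$ applied to $\esp{e^{\lambda Y_i}}=e^{-\lambda\esp{X_i}}\esp{e^{\lambda X_i}}$.

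\textbf{Step 2: Tensorization and Markov.} By independence of the $X_i$'s,
\[
\log \esp{e^{\lambda S}} = \sum_{i=1}^n \log \esp{e^{\lambda Y_i}} \leq \frac{\lambda^2 v/2}{1-c\lambda},
\]
using the assumption $\sum_i \esp{X_i^2}\leq v$. Markov's inequality then gives, for every $\lambda\in(0,1/c)$ and every $x>0$,
\[
\proba{S\geq x} \leq \exp\!\pa{-\lambda x + \frac{\lambda^2 v/2}{1-c\lambda}}.
\]

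\textbf{Step 3: Optimization.} To derive \eqref{eq:ineq_Bernstein_proba}, I would choose $\lambda^\star = x/(v+cx)$, which lies in $(0,1/c)$, and substitute to obtain the exponent $-x^2/(2(v+cx))$. Inequality \eqref{eq:ineq_Bernstein_stat} follows from \eqref{eq:ineq_Bernstein_proba} by the standard inversion trick: setting $x = \sqrt{2vt}+ct$ and checking algebraically that $x^2/(2(v+cx)) \geq t$, which boils down to expanding $(\sqrt{2vt}+ct)^2 \geq 2t(v+c(\sqrt{2vt}+ct))$ and using $2\sqrt{2vt}\cdot ct + c^2t^2 \geq 2ct\sqrt{2vt}$ trivially.

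\textbf{Main obstacle.} The only delicate point is Step 1, because the moment hypothesis involves the positive part $(X_i)_+^k$ rather than centered or absolute moments. One must therefore carefully pass from $\esp{e^{\lambda X_i}}$ to $\esp{e^{\lambda Y_i}}$ without losing the one-sided nature of the control, while still absorbing the even-$k$ terms. This is exactly where the factor $1-c\lambda$ appears and constrains $\lambda<1/c$; the rest of the argument is routine Chernoff optimization.
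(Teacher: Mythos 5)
The paper does not actually prove this theorem: it quotes it verbatim as a classical result from Massart's book, so there is no in-paper proof to compare against. Your Cram\'er--Chernoff plan (Steps 1--2) is the right one and essentially correct, up to a small organizational issue: the per-variable bound $\log\esp{e^{\lambda Y_i}}\leq\frac{\lambda^2\esp{X_i^2}/2}{1-c\lambda}$ cannot be derived from the hypotheses, since the moment control $\sum_i\esp{(X_i)_+^k}\leq\frac{k!}{2}vc^{k-2}$ is only a \emph{collective} bound. The correct per-variable estimate is $\log\esp{e^{\lambda Y_i}}\leq\frac{\lambda^2\esp{X_i^2}}{2}+\sum_{k\geq3}\frac{\lambda^k\esp{(X_i)_+^k}}{k!}$ (using $e^{\lambda x}-1-\lambda x\leq\frac{(\lambda x)^2}{2}+\sum_{k\geq3}\frac{(\lambda x_+)^k}{k!}$ for all real $x$ and $\lambda>0$, together with $\log(1+u)\leq u$); one then sums over $i$ and only at that point applies the two collective hypotheses and the geometric series to obtain $\log\esp{e^{\lambda S}}\leq\frac{\lambda^2v/2}{1-c\lambda}$.

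The genuine gap is Step 3: your claim that \eqref{eq:ineq_Bernstein_stat} follows from \eqref{eq:ineq_Bernstein_proba} by putting $x=\sqrt{2vt}+ct$ is false, because the required inequality $x^2/(2(v+cx))\geq t$ fails. Indeed $(\sqrt{2vt}+ct)^2=2vt+2ct\sqrt{2vt}+c^2t^2$ whereas $2t\pa{v+c\pa{\sqrt{2vt}+ct}}=2vt+2ct\sqrt{2vt}+2c^2t^2$, so $x^2-2t(v+cx)=-c^2t^2<0$; your reduction to ``$2\sqrt{2vt}\,ct+c^2t^2\geq 2ct\sqrt{2vt}$'' has a sign error. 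The implication actually runs in the \emph{opposite} direction: given $y>0$, setting $x$ so that $y=\sqrt{2vx}+cx$ yields $2(v+cy)x=2vx+2cx\sqrt{2vx}+2c^2x^2\geq y^2$, hence $x\geq y^2/(2(v+cy))$ and \eqref{eq:ineq_Bernstein_stat} implies \eqref{eq:ineq_Bernstein_proba}. To prove \eqref{eq:ineq_Bernstein_stat} you must instead return to the Chernoff bound $\proba{S\geq y}\leq\exp\pa{-\sup_{0<\lambda<1/c}\pa{\lambda y-\frac{\lambda^2v/2}{1-c\lambda}}}$ and evaluate the Legendre transform: one finds $\psi^*(y)=\frac{v}{c^2}h_1\!\pa{\frac{cy}{v}}$ with $h_1(u)=1+u-\sqrt{1+2u}$, whose inverse $h_1^{-1}(t)=t+\sqrt{2t}$ gives precisely $\proba{S\geq\sqrt{2vt}+ct}\leq e^{-t}$ (this is the route of Proposition 2.9 and Corollary 2.10 in Massart's book). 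So you should derive \eqref{eq:ineq_Bernstein_stat} first and \eqref{eq:ineq_Bernstein_proba} as a consequence, not the other way around.
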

Notice that both forms of Bernstein's inequality appear in the literature. Yet, due to its form, \eqref{eq:ineq_Bernstein_stat} is rather preferred in statistics, even though \eqref{eq:ineq_Bernstein_proba} is more classical.

The work in this article is based on the pioneering work of Talagrand (see \cite{Talagrand1995} for a review) who investigates the concentration of measure phenomenon for product measures. Of main interest here, he proved the following inequality for random permutations in \cite[Theorem~5.1]{Talagrand1995}.
\begin{thm}[Talagrand, 1995]
\label{thm:Talagrand}
Denote by $\Sn{n}$ the set of all permutations of $\ac{1,\dots,n}$. Define for any subset $A\subset \Sn{n}$, and permutation $\perm\in\Sn{n}$, 
$$U_A(\perm) = \ac{s \in \{0,1\}^n \ ;\ \exists \tau \in A \mbox{ such that } \forall 1\leq i\leq n,\ s_i=0 \implies \tau(i)=\perm(i)}.$$
Then, consider $V_A(\perm)=\operatorname{ConvexHull}\pa{U_A(\perm)}$, and 
$$f(A,\perm) = \min\ac{\sum_{i=1}^n v_i^2\ ;\ v=\pa{v_i}_{1\leq i\leq n}\in V_A(\perm)}.$$

Then, if $P_n$ denotes the uniform distribution on $\Sn{n}$, 
$$\int_{\Sn{n}} e^{\frac{1}{16} f(A,\perm)}dP_n(\perm) \leq \frac{1}{P_n(A)}.$$

Therefore, by Markov's inequality, for all $t>0$, 
\begin{equation}
\label{eq:Talagrand-Markov}
P_n\pa{\perm\ ;\ f(A,\perm) \geq t^2} \leq \frac{e^{-t^2/16}}{P_n(A)}.
\end{equation}
\end{thm}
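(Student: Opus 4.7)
I would argue by induction on $n$, following Talagrand's scheme for concentration of measure on the symmetric group. The base case $n=1$ is trivial since $\Sn{1}$ is a singleton and then $f(A,\perm)=0$ whenever $A\neq\emptyset$.

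For the inductive step, I would condition on the terminal value $k=\perm(n)$. For each $j\in\ac{1,\dots,n}$, let $A^{(j)}=\ac{\tau\in A \ ;\ \tau(n)=j}$ and $\alpha_j=|A^{(j)}|/|A|$; note $\sum_j\alpha_j=1$. Identifying $\ac{1,\dots,n}\setminus\ac{j}$ with $\ac{1,\dots,n-1}$ and, when $j\neq k$, applying a canonical transposition at position $n$ to realign the terminal value with $k$, each $A^{(j)}$ is mapped to a set $B^{(j)}\subset\Sn{n-1}$ of the same cardinality. The heart of the argument is a convex-combination lemma: for any probability vector $(t_j)_{j=1}^n$ and $\perm'=\perm|_{<n}$,
\begin{equation*}
f(A,\perm) \leq \sum_{j=1}^n t_j\, f\bigl(B^{(j)},\perm'\bigr) + (1-t_k)^2,
\end{equation*}
obtained by lifting each optimal $v^{(j)}\in V_{B^{(j)}}(\perm')$ to a vector in $V_{A^{(j)}}(\perm)$ by appending a last coordinate $0$ if $j=k$ and $1$ otherwise, convex-combining with weights $t_j$, and invoking convexity of $V_A(\perm)$. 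The transposition used when $j\neq k$ introduces at most one additional coordinate of disagreement, which is absorbed into the term $(1-t_k)^2$.

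Exponentiating and applying Jensen's inequality to the convex function $x\mapsto e^{x/16}$ yields
\begin{equation*}
e^{f(A,\perm)/16} \leq e^{(1-t_k)^2/16}\sum_{j=1}^n t_j\, e^{f(B^{(j)},\perm')/16}.
\end{equation*}
Integrating in $\perm'$ against $P_{n-1}$, invoking the induction hypothesis in the form $\int e^{f(B^{(j)},\perm')/16}\,dP_{n-1}\leq (n-1)!/|A^{(j)}|$, and averaging over $k=\perm(n)$ with uniform weights $1/n$ gives
\begin{equation*}
\int e^{f(A,\perm)/16}\, dP_n(\perm) \leq \frac{1}{n^2\, P_n(A)}\sum_{k=1}^n e^{(1-t_k^{(k)})^2/16}\sum_{j=1}^n\frac{t_j^{(k)}}{\alpha_j},
\end{equation*}
where each weight vector $t^{(k)}$ may be chosen in terms of $k$.

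The final step is an analytic optimization over the $(t^{(k)})$: concentrating most mass on $j=k$ keeps $\sum_j t_j^{(k)}/\alpha_j$ small while $(1-t_k^{(k)})^2$ stays close to zero, and an elementary one-variable inequality on $[0,1]$, essentially of the form $\alpha\,e^{(1-\alpha)^2/16}\leq 2-(1-\alpha)$, then bounds the right-hand side by $1/P_n(A)$, closing the induction; \eqref{eq:Talagrand-Markov} follows immediately by Markov's inequality applied to $e^{f(A,\perm)/16}$. The main obstacle will be this twofold delicate step: the combinatorial swap bookkeeping (certifying that no more than one extra coordinate of disagreement is introduced when $j\neq k$) together with the sharp one-variable calculus inequality pinning down the constant $1/16$. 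The former is the place where the symmetric-group structure enters substantively and distinguishes this argument from its product-space analogue, while the latter is the place where the exponent $1/16$ is either achieved or improved in variants of the proof.
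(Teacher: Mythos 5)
The paper does not prove this statement; it quotes it verbatim as \cite[Theorem~5.1]{Talagrand1995}, so there is no ``paper's own proof'' to compare against. Your sketch does, however, follow the broad outline of Talagrand's original argument (induction on $n$, conditioning on $\perm(n)$, a convex-combination lemma feeding into Jensen's inequality, and a final one-variable calculus step), so the framework is right.

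There is nevertheless a genuine gap in the convex-combination lemma, and it is located exactly at the spot you flag as ``the main obstacle.'' Your claim that the realignment transposition ``introduces at most one additional coordinate of disagreement, which is absorbed into the term $(1-t_k)^2$'' is not correct. Fix $\tau\in A$ with $\tau(n)=j\neq k=\perm(n)$ and let $m'=\tau^{-1}(k)<n$, so the realigned permutation is $\sigma=\tau\circ(m',n)$. Then $\tau$ and $\sigma$ differ exactly at the positions $n$ \emph{and} $m'$, and one checks that the disagreement set of $\tau$ against $\perm$ is contained in the disagreement set of $\sigma|_{<n}$ against $\perm'$ together with $\{m',n\}$ — two extra coordinates, not one, and the second extra coordinate sits in $\{1,\dots,n-1\}$, not at position $n$. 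Moreover $m'$ depends on the witness $\tau$, so across the basic vectors forming $v^{(j)}$ these extra coordinates land in different slots; they cannot be folded into the single scalar contribution $(1-t_k)^2$ coming from the appended $n$-th coordinate. Carrying the bookkeeping honestly produces an additive cost of order $4(1-t_k)^2$ rather than $(1-t_k)^2$, and this factor of $4$ is precisely why the exponent in the permutation version of the theorem is $1/16$ rather than the $1/4$ that appears in Talagrand's product-measure inequality. As written, your lemma would prove the bound with exponent $1/4$, which is not what the theorem asserts and would be a nontrivial strengthening that I do not believe holds. Relatedly, the closing one-variable inequality $\alpha\,e^{(1-\alpha)^2/16}\leq 1+\alpha$ is not sharp enough to make the average $\tfrac{1}{n^2}\sum_k(\cdot)$ fall below $1$ once the weights $t^{(k)}$ are chosen, so that step also needs to be set up more carefully; in Talagrand's proof the reduction is to a two-set inequality (a ``slice'' and a ``projection'') with the calculus inequality calibrated to the enlarged $4(1-t)^2$ cost. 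To repair the sketch, redo the lifting step to track both extra positions $n$ and $m'$, derive the lemma with cost $4(1-t_k)^2$, and then reprove the corresponding one-variable estimate for the exponent $1/16$.
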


This result on random permutations is fundamental, and is a key point to many other non-asymptotic works on random permutations. 
Among them emerges McDiarmid's article \cite{McDiarmid2002} in which he derives from Talagrand's inequality, exponential concentration inequalities around the median for randomly permuted functions of the observation under Lipschitz-type conditions and applied to randomized methods for graph coloring. 
More recently, Adamczak et al. obtained in \cite{AdamczakChafaiWolff2014} some concentration inequality under convex-Lipschitz conditions when studying the empirical spectral distribution of random matrices. 
In particular, they prove the following Theorem (precisely \cite[Theorem 3.1]{AdamczakChafaiWolff2014}). 
\begin{thm}[Adamczak, Chafai and Wolff, 2014] 
\label{thm:AdamczakChafaiWolff}
Consider $x_1,\ldots,x_n$ in $[0,1]$ and let $\varphi:[0,1]^n \to \R$ be an $L$-Lipschitz convex function. Let $\rperm$ be a random uniform permutation of the set $\ac{1,\dots,n}$ and denote $Y=\varphi\!\pa{x_{\rperm(1)},\dots, x_{\rperm(n)}}$. 
Then, there exists some positive absolute constant $c$ such that, for all $t>0$, 
$$\proba{Y-\esp{Y}\geq t} \leq 2\exp\pa{-\frac{ct^2}{L^2}}.$$
\end{thm}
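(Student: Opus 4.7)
The plan is to apply Talagrand's inequality (Theorem~\ref{thm:Talagrand}) to the sub-level set determined by a median of $Y$, exploiting the convexity of $\varphi$ to convert the geometric quantity $f(A,\pi)$ into an upper bound on $Y$, and then to transfer the resulting concentration around the median to concentration around the mean.

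Let $M$ be a median of $Y$ and set
\[
A = \ac{\pi \in \Sn{n} \ : \ \varphi(x_{\pi(1)},\dots,x_{\pi(n)}) \leq M},
\]
so that $P_n(A) \geq 1/2$. The pivotal deterministic step is to prove that for every $\pi \in \Sn{n}$,
\[
\varphi(x_{\pi(1)},\dots,x_{\pi(n)}) \leq M + L\sqrt{f(A,\pi)}.
\]
Fix such a $\pi$ and any convex combination $v = \sum_k \lambda_k s^{(k)} \in V_A(\pi)$, where each $s^{(k)} \in U_A(\pi)$ is associated to a witness $\tau_k \in A$ with $\tau_k(i)=\pi(i)$ whenever $s^{(k)}_i=0$. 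Introduce the auxiliary point $\bar z = \sum_k \lambda_k (x_{\tau_k(1)},\dots,x_{\tau_k(n)}) \in [0,1]^n$. By convexity of $\varphi$,
\[
\varphi(\bar z) \leq \sum_k \lambda_k\, \varphi(x_{\tau_k(1)},\dots,x_{\tau_k(n)}) \leq M,
\]
and, coordinate-wise, since $x_j \in [0,1]$ and $x_{\pi(i)} = x_{\tau_k(i)}$ whenever $s^{(k)}_i=0$, one gets $\abs{x_{\pi(i)} - \bar z_i} \leq \sum_k \lambda_k s^{(k)}_i = v_i$. The $L$-Lipschitz assumption then gives
\[
\varphi(x_{\pi(1)},\dots,x_{\pi(n)}) \leq \varphi(\bar z) + L \norm{x_\pi - \bar z}_2 \leq M + L\sqrt{\textstyle\sum_i v_i^2},
\]
and taking the infimum over $v \in V_A(\pi)$ yields the claim.

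Combining this with \eqref{eq:Talagrand-Markov} immediately produces the one-sided concentration around the median:
\[
\proba{Y \geq M + t} \leq P_n\pa{\pi \ ;\ f(A,\pi) \geq t^2/L^2} \leq \frac{e^{-t^2/(16 L^2)}}{P_n(A)} \leq 2 e^{-t^2/(16 L^2)}.
\]
To pass from the median to the mean, integrate the tail: $\esp{Y} - M \leq \int_0^\infty \proba{Y > M + t}\, dt \leq C_0 L$ for an absolute constant $C_0$, and in particular $M \geq \esp{Y} - C_0 L$. Substituting $M + t \geq \esp{Y} + (t - C_0 L)$ into the previous estimate and splitting at $t = 2 C_0 L$ (on the small-$t$ range, the trivial bound $\proba{\cdot} \leq 1$ can be absorbed into a larger absolute constant) yields $\proba{Y - \esp{Y} \geq t} \leq 2 e^{-c t^2 / L^2}$ with a possibly smaller absolute constant $c>0$, which is the conclusion of Theorem~\ref{thm:AdamczakChafaiWolff}.

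The main obstacle is the geometric estimate $Y(\pi) \leq M + L \sqrt{f(A,\pi)}$: one must carefully align the convex combinations defining $V_A(\pi)$ with the witness permutations $\tau_k \in A$ through the averaged point $\bar z$, and this is the only place where both the convexity of $\varphi$ and the boundedness of the $x_i$'s intervene (convexity to ensure $\varphi(\bar z) \leq M$, boundedness to convert the combinatorial vector $s^{(k)}$ into a coordinate-wise control of $\abs{x_{\pi(i)} - x_{\tau_k(i)}}$). The subsequent median-to-mean transfer is routine.
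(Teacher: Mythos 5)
The paper only cites this theorem from Adamczak--Chafa\"i--Wolff and does not reprove it; the relevant comparison is with the paper's own proof of the analogous Lemma~\ref{lm:concRac}, which uses the same Talagrand strategy but treats \emph{both} tails around the median.

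Your geometric estimate $Y(\pi) \leq M + L\sqrt{f(A,\pi)}$ for $A = \{\pi : Y(\pi)\leq M\}$ is correct, and so is the resulting one-sided bound $\proba{Y \geq M + t} \leq 2e^{-t^2/(16L^2)}$. The gap is in the median-to-mean transfer. From this upper tail you compute $\esp{Y}-M \leq \int_0^\infty \proba{Y > M+t}\,dt \leq C_0 L$, i.e.\ $M \geq \esp{Y}-C_0 L$. But to pass to the claimed bound on $\proba{Y - \esp{Y}\geq t}$, you must realize $\{Y\geq \esp{Y}+t\}$ inside $\{Y\geq M+s\}$ for some \emph{positive} $s$ comparable to $t$, which requires $\esp{Y}+t \geq M+s$, i.e.\ an upper bound $M - \esp{Y} \leq C_1 L$. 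Your inequality $M\geq \esp{Y}-C_0L$ is the opposite direction and is useless here: it tells you only that $\proba{Y\geq M+t} \leq \proba{Y\geq \esp{Y}+(t-C_0L)}$, a lower bound on the quantity you want to control. Indeed the upper tail around the median alone cannot bound $M-\esp{Y}$ from above: a random variable equal to $-N$ with probability $p<1/2$ and $0$ otherwise has median $0$, mean $-pN$, and vanishing upper tail at $0$, yet $M-\esp{Y}=pN$ is unbounded.

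The fix is to also establish the \emph{lower} tail around the median, which your own geometric estimate already yields for free: take $A_t=\{\pi : Y(\pi)\leq M-t\}$, apply the same deterministic inequality to conclude $\{Y \geq M\}\subseteq\{f(A_t,\cdot)\geq t^2/L^2\}$, and combine with $\proba{Y\geq M}\geq 1/2$ and \eqref{eq:Talagrand-Markov} to get $\proba{Y\leq M-t}\leq 2e^{-t^2/(16L^2)}$. Integrating this lower tail gives $M-\esp{Y}\leq \esp{(M-Y)_+} \leq C_1 L$, which is the inequality your transfer actually needs; the rest of your argument (splitting at a multiple of $L$ and absorbing small $t$ into the constant $c$) then goes through. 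This is also how the paper handles the two tails separately in its proof of Lemma~\ref{lm:concRac} (second and third steps), choosing different constants $C_A$ for each side.
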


Yet, the Lispchitz assumptions may be very restrictive and may not be satisfied by the functions considered in the application fields (see Section \ref{sct:motivstat} for instance).
Hence, the idea is to exploit the attractive form of a sum. 
Based on Stein's method, initially introduced to study the Gaussian behavior of sums of dependent random variables, Chatterjee studies permuted sums of non-negative numbers in \cite{Chatterjee2007}. He obtains in \cite[Proposition 1.1]{Chatterjee2007} the following first Bernstein-type concentration inequality for non-negative terms around the mean. 
\begin{thm}[Chatterjee, 2007]
\label{thm:Chatterjee2007}
Let $\ac{a_{i,j}}_{1\leq i,j\leq n}$ be a collection of numbers from $[0,1]$. 
Let $Z_n = \sum_{i=1}^n a_{i,\rperm(i)}$, where $\rperm$ is drawn from the uniform distribution over the set of all permutations of $\ac{1,\dots,n}$. Then, for any $t\geq 0$, 
\begin{equation}
\label{eq:Chatterjee2007}
\proba{\abs{Z_n-\esp{Z_n}}\geq t} \leq 2 \exp\pa{-\frac{t^2}{4\esp{Z_n} + 2t}}.
\end{equation}
\end{thm}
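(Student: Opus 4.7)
The plan is to apply Talagrand's inequality (Theorem~\ref{thm:Talagrand}) directly to the permuted sum $Z_n$, bypassing Chatterjee's Stein-type argument. Let $M$ be a median of $Z_n$ under $P_n$ and set $A=\ac{\perm\in\Sn{n}:Z_n(\perm)\leq M}$, so that $P_n(A)\geq 1/2$; I first aim to control the upper tail $\proba{Z_n\geq M+t}$.

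The core step is a pointwise lower bound on $f(A,\perm)$ in terms of the excess $Z_n(\perm)-M$ on the event $\ac{Z_n>M}$. Fix such a $\perm$, pick $s\in U_A(\perm)$, and let $\tau\in A$ be its companion permutation from the definition of $U_A(\perm)$. Since $s_i=0$ forces $\tau(i)=\perm(i)$, one has
$$\sum_{i=1}^n a_{i,\perm(i)}\, s_i \;\geq\; \sum_{i:\tau(i)\neq\perm(i)} a_{i,\perm(i)} \;\geq\; Z_n(\perm)-Z_n(\tau) \;\geq\; Z_n(\perm)-M,$$
and because the coefficients $a_{i,\perm(i)}$ do not depend on the choice of $\tau$, the bound passes by linearity to every $v\in V_A(\perm)$. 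Combined with Cauchy-Schwarz and the key observation $a_{i,j}^2\leq a_{i,j}$ for $a_{i,j}\in[0,1]$, this yields
$$Z_n(\perm)-M \;\leq\; \sqrt{\textstyle\sum_i a_{i,\perm(i)}^2}\,\sqrt{\textstyle\sum_i v_i^2} \;\leq\; \sqrt{Z_n(\perm)}\,\sqrt{\textstyle\sum_i v_i^2},$$
so $f(A,\perm)\geq (Z_n(\perm)-M)^2/Z_n(\perm)$. Since $y\mapsto y^2/(M+y)$ is increasing in $y\geq 0$, on $\ac{Z_n\geq M+t}$ we get $f(A,\perm)\geq t^2/(M+t)$, and \eqref{eq:Talagrand-Markov} then yields $\proba{Z_n\geq M+t}\leq 2\exp\!\pa{-t^2/[16(M+t)]}$.

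The lower tail follows from the same argument applied to $Z'_n=\sum_i(1-a_{i,\rperm(i)})=n-Z_n$, whose coefficients still lie in $[0,1]$ and whose median is $n-M$; the event $\ac{Z_n\leq M-t}$ coincides with $\ac{Z'_n\geq (n-M)+t}$, and the trivial bound $Z_n\geq 0$ handles the regime where the resulting denominator is unfavorable. Finally, integrating the two tail bounds shows that $\abs{\esp{Z_n}-M}$ is of order $\sqrt{\esp{Z_n}}$, which can be absorbed into the denominator and delivers an inequality of the shape \eqref{eq:Chatterjee2007}.

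The main obstacle is preserving the sharp numerical constants of Chatterjee's bound. The factor $1/16$ in Theorem~\ref{thm:Talagrand}, the slackness of Cauchy-Schwarz, and the median-to-mean transfer jointly produce a denominator of the correct order $O(\esp{Z_n}+t)$ but with constants worse than $4$ and $2$. Tightening them is precisely the motivation for the bootstrap scheme announced in the abstract, where a rough concentration of $\sqrt{Z_n}$ is first established and then iterated to tighten the denominator; I would expect this refinement, rather than the structural inequality itself, to be the delicate point.
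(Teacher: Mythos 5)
This statement is quoted from Chatterjee (2007); its original proof is a Stein-type exchangeable-pair argument, and the paper does not re-derive it. Your Talagrand route is, structurally, the one the paper uses for its own bounds, and your upper-tail step is correct --- it is essentially the first step of the proof of Lemma~\ref{lm:concRac}, specialised to $[0,1]$ coefficients via $a_{i,j}^2\leq a_{i,j}$.

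The genuine gap is the lower tail. Applying the upper-tail bound to $Z_n'=n-Z_n$ yields $\proba{Z_n\leq M-t}\leq 2\exp\pa{-t^2/\cro{16(n-M+t)}}$, whose denominator is of order $n$ rather than $\esp{Z_n}+t$ whenever $\esp{Z_n}\ll n$ (recall $\med{Z_n}\leq 2\esp{Z_n}$ by Markov, so $n-M\geq n-2\esp{Z_n}$). The constraint $Z_n\geq 0$ only makes the tail vanish for $t>M$, and for $0<t\leq M$ with $\esp{Z_n}\ll n$ neither estimate reaches the target order $\exp\pa{-ct^2/(\esp{Z_n}+t)}$, so complementation does not close the argument. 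The paper avoids this asymmetry by working with $\sqrt{Z_n}$: in the third step of the proof of Lemma~\ref{lm:concRac}, the set is kept as $A=\ac{\tau:Z_n(\tau)\leq C_A}$ but $C_A$ is re-chosen as $\pa{\sqrt{\med{Z_n}}-t\sqrt{\max_{i,j}a_{i,j}}}^2$, which forces the opposite tail to have probability at least $1/2$ and turns Talagrand's inequality into a direct upper bound on $P_n(A)=\proba{Z_n\leq C_A}$ itself. If you redo the Cauchy--Schwarz step for $\sqrt{Z_n}$ rather than $Z_n$, both tails come out symmetrically, and passing to the mean via Lemma~\ref{lm:ineqEspMedVar} and Markov is routine. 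A final correction: the iteration you invoke is not aimed at tightening Chatterjee's constants --- plugging the concentration of $\sqrt{\sum_i a_{i,\rperm(i)}^2}$ back in replaces $\esp{Z_n}$ in the denominator by $\frac{1}{n}\sum_{i,j}a_{i,j}^2$, a qualitatively sharper dependence for $[0,1]$ entries, but with strictly worse constants ($8e^{1/16}$ in front, $16$ in the exponent), so the Talagrand route was never going to reproduce the $2$ and $4$ of \eqref{eq:Chatterjee2007}.
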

Notice that because of the expectation term in the right-hand side of \eqref{eq:Chatterjee2007}, the link with Hoeffding's combinatorial central limit theorem (for instance) is not so clear. \\

In \cite[Theorem 4.3]{BercuDelyonRio2015}, this result is sharpened in the sense that this expectation term is replaced by a variance term, allowing us to provide a non-asymptotic version of such combinatorial central limit theorem. This result is moreover generalized to any real numbers (not necessarily non-negative). 
More precisely, based on martingale theory, they prove the following result. 

\begin{thm}[Bercu, Delyon and Rio, 2015]
\label{thm:BercuDelyonRio2015}
Let $\ac{a_{i,j}}_{1\leq i,j\leq n}$ be an array of real numbers from $[-m_a,m_a]$. 
Let $Z_n = \sum_{i=1}^n a_{i,\rperm(i)}$, where $\rperm$ is drawn from the uniform distribution over the set of all permutations of $\ac{1,\dots,n}$. Then, for any $t> 0$, 
\begin{equation}
\label{eq:BercuDelyonRio2015}
\proba{\abs{Z_n-\esp{Z_n}}\geq t} \leq 4 \exp\pa{-\frac{t^2}{16(\theta \frac{1}{n}\sum_{i,j=1}^n a_{i,j}^2 + m_a t/3)}},
\end{equation}
where $\displaystyle\theta=\frac{5}{2} \ln(3) - \frac{2}{3}.$
\end{thm}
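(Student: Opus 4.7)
My plan is to follow the strategy announced in the introduction: iterate Talagrand's inequality (Theorem~\ref{thm:Talagrand}) twice, using a first pass to obtain a rough sub-Gaussian control of the square root of an associated non-negative permuted sum, and a second pass that feeds this control back into a direct Talagrand argument for $Z_n$ itself.

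The backbone of each iteration is a Cauchy--Schwarz identity. For a non-negative array $(b_{i,j})$ with $b_{i,j} \in [0,B]$ and permuted sum $W_n(\pi) = \sum_i b_{i,\pi(i)}$, writing $s^{(\tau)}_i = \1{\pi(i)\neq\tau(i)}$, one has
\begin{equation*}
W_n(\pi) - W_n(\tau) = \sum_{i:\pi(i)\neq\tau(i)}(b_{i,\pi(i)}-b_{i,\tau(i)}) \leq \sqrt{\sum_i b_{i,\pi(i)}^2}\,\|s^{(\tau)}\|_2 \leq \sqrt{B\,W_n(\pi)}\,\|s^{(\tau)}\|_2.
\end{equation*}
Dividing by $\sqrt{W_n(\pi)}+\sqrt{W_n(\tau)}$ shows that $\sqrt{W_n}$ is $\sqrt{B}$-Lipschitz in the Hamming distance. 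Taking $A$ to be the median sublevel set of $\sqrt{W_n}$ and extending the Lipschitz bound to any $v \in V_A(\pi)$ by the convex-hull trick together with Cauchy--Schwarz (the key point being that the upper bound is \emph{linear} in $s^{(\tau)}$ via a $\tau$-free vector), Theorem~\ref{thm:Talagrand} combined with $P_n(A) \geq 1/2$ yields
\begin{equation*}
\proba{\sqrt{W_n(\rperm)} - \med{\sqrt{W_n}} \geq \sqrt{B}\,t} \leq 2 e^{-t^2/16}, \qquad t>0,
\end{equation*}
which, after squaring, is a rough Chatterjee-type Bernstein tail for $W_n$ with variance proxy of order $B\,\med{W_n}$.

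For the iteration, the same machinery is applied directly to $Z_n$. Using the $\tau$-independent upper bound $a_{i,\pi(i)} - a_{i,\tau(i)} \leq \alpha_i(\pi) s^{(\tau)}_i$ with $\alpha_i(\pi) = (a_{i,\pi(i)})_+ + m_a$, and $A_m = \ac{\tau : Z_n(\tau) \leq \med{Z_n}}$, the convex-hull argument gives the pointwise inequality
\begin{equation*}
(Z_n(\pi) - \med{Z_n})^2 \leq \|\alpha(\pi)\|_2^2 \cdot f(A_m,\pi).
\end{equation*}
Since $\|\alpha(\pi)\|_2^2$ is itself dominated by a permuted sum of non-negative terms, the first-pass bound confines it to the event $\ac{\|\alpha(\rperm)\|_2^2 \leq M}$ with high probability. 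Combined with Theorem~\ref{thm:Talagrand} applied to $A_m$, a union bound yields, for $u>0$,
\begin{equation*}
\proba{Z_n - \med{Z_n} \geq u} \leq \proba{\|\alpha(\rperm)\|_2^2 > M} + 2\exp\pa{-\frac{u^2}{16 M}},
\end{equation*}
and an affine optimization of $M$ as a function of $u$ produces the Bernstein-form tail with variance proxy of order $V = n^{-1}\sum_{i,j} a_{i,j}^2$ and linear term of order $m_a u$. The lower tail is symmetric (replace $a_{i,j}$ with $-a_{i,j}$); the passage from median to mean uses the standard integration bound $\abs{\med{Z_n}-\esp{Z_n}} = O(\sqrt{V}+m_a)$, absorbed into the constants to yield \eqref{eq:BercuDelyonRio2015}.

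The hardest part will be extracting the precise constants $\theta = \tfrac{5}{2}\ln 3 - \tfrac{2}{3}$ and $1/3$: these come from a tight balance between the Talagrand exponent $1/16$, the sub-Gaussian rate of the first pass, and the optimal choice of the cutoff $M$. The key conceptual difficulty is ensuring that $\|\alpha(\pi)\|_2^2$ is effectively controlled by $V$ on the typical event rather than by its crude deterministic bound of order $n m_a^2$; it is precisely this gap that forces the two-pass structure, since a single-pass Talagrand argument with the naive coefficient would only deliver a sub-Gaussian tail with variance proxy $n m_a^2$, losing the variance-like scaling essential to Bernstein.
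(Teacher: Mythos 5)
Your proposal targets a statement the paper never actually proves: Theorem~\ref{thm:BercuDelyonRio2015} is quoted verbatim from Bercu, Delyon and Rio~\cite{BercuDelyonRio2015}, where it is established by a martingale construction, and the author of the present paper says explicitly that her Talagrand-based route recovers this result only ``up to constants.'' What the paper actually proves by the iterated-Talagrand method is Theorem~\ref{thm:concQcqMoy} and Corollary~\ref{coro:concQcqMoy}, whose exponent involves $256\pa{\var{Z_n}+\max_{i,j}\abs{a_{i,j}}\,t}$ with prefactor $16e^{1/16}$, not the sharp values $\theta = \tfrac{5}{2}\ln 3-\tfrac{2}{3}$, $m_a t/3$, and prefactor $4$ of~\eqref{eq:BercuDelyonRio2015}. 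Extracting those exact constants from Talagrand's inequality (whose $e^{-t^2/16}$ tail already caps the constant budget) is not merely ``the hardest part,'' as you put it; within the paper's methodology it is simply not available.

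Beyond the constants, there is a concrete gap in your signed-sum step. You propose the $\tau$-free majorant $a_{i,\pi(i)}-a_{i,\tau(i)}\leq\alpha_i(\pi)\,s_i^{(\tau)}$ with $\alpha_i(\pi)=(a_{i,\pi(i)})_++m_a$. But then $\norm{\alpha(\pi)}_2^2=\sum_{i=1}^n\pa{(a_{i,\pi(i)})_++m_a}^2\geq n\,m_a^2$ deterministically, since each summand is at least $m_a^2$. Any cutoff $M$ defining the event $\ac{\norm{\alpha(\rperm)}_2^2\leq M}$ must therefore exceed $n m_a^2$, which generically dwarfs $V=n^{-1}\sum_{i,j}a_{i,j}^2$; the ``first-pass confinement'' cannot remove this additive floor because it is not a fluctuation. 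This is precisely the failure mode you flag at the end of your own sketch, and your choice of $\alpha$ walks straight into it. The paper avoids it by working in the non-negative case first, where the $\tau$-free majorant is just $a_{i,\pi(i)}$ (so $\norm{\alpha(\rperm)}_2^2=\sum_{i=1}^n a_{i,\rperm(i)}^2$, whose median is at most $2V$ by Markov), and then handling a general array through the split $Z_n=Z_n^+-Z_n^-$ followed by a triangle inequality and a union bound (see the proofs of Propositions~\ref{prop:concPosMed} and~\ref{prop:concPosMoy} and of Theorem~\ref{thm:concQcqMoy}). To make your plan close --- even just to reach the paper's own weaker version --- you would need to rebuild it around this positive/negative decomposition rather than the single signed-coefficient vector $\alpha(\pi)$.
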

In this work, we obtain a similar result (up to constants) but based on a completely different approach. Moreover, this approach provides a direct proof for a concentration inequality of a permuted sum around its median. 

\paragraph{}
The present work is organized as follows. In Section \ref{sct:conc_ineq} are formulated the main results. 
Section \ref{sct:sum_pos_case} is devoted to the permuted sums of non-negative numbers. Based on Talagrand's result, a first rough concentration inequality for the square root of permuted sum is obtained in Lemma \ref{lm:concRac}. Then by iterating the previous analysis and plugging this first inequality, a general concentration of permuted sums around their median is obtained in Proposition \ref{prop:concPosMed}. Finally, the concentration inequality of Proposition \ref{prop:concPosMoy} around the mean is deduced. In Section \ref{sct:sum_general_case}, the previous inequalities are generalized to general permuted sums of not necessarily non-negative terms in Theorem \ref{thm:concQcqMoy}. 
Section \ref{sct:applitestindep} presents an application to the study of non-asymptotic properties of a permutation independence test in Statistics. In particular, a sharp control of the critical value of the test is deduced from the main result. 
The proofs are detailed in Section \ref{sct:proofs}. 
Finally, Appendix~\ref{sct:cond_Chebychev_Hoeffding_gen} contains technical results for the non-asymptotic control of the second kind error rate of the permutation test introduced in Section \ref{sct:applitestindep}.

\section{Bernstein-type concentration inequalities for permuted sums}
\label{sct:conc_ineq}

Let us first introduce some general notation. 
In the sequel, denote by $\Sn{n}$ the set of permutations of $\ac{1,2,\ldots,n}$. 
For all collection of real numbers $\ac{a_{i,j}}_{1\leq i,j\leq n}$, and for each $\perm$ in $\Sn{n}$, consider the permuted sum 
$$Z_n(\perm)=\sum_{i=1}^n a_{i,\perm(i)}.$$

Let $\rperm$ be a random uniform permutation in $\Sn{n}$, and $Z_n:=Z_n(\rperm)$. 
Denote $\med{Z_n}$ its median, that is which satisfies 
$$\proba{Z_n\geq \med{Z_n}} \geq 1/2\quad \mbox{and}\quad \proba{Z_n\leq \med{Z_n}}\geq 1/2. $$

This study is divided in two steps. The first one is restrained to non-negative terms. The second one extends the previous results to general terms, based on a trick involving both non-negative and negative parts.  

\subsection{Concentration of permuted sums of non-negative numbers}
\label{sct:sum_pos_case}

In the present section, the collection of numbers $\ac{a_{i,j}}_{1\leq i,j\leq n}$ is assumed to be non-negative. 
The proof of the concentration inequality around the median in Proposition \ref{prop:concPosMed} needs a preliminary step which is presented in Lemma \ref{lm:concRac}. 
It provides concentration inequality for the square root of the sum. 
It allows us then by iterating the same argument, and plugging the obtained inequality to the square root of the sum of the squares, namely $\sqrt{\sum_{i=1}^n a_{i,\rperm(i)}^2}$, to be able to sharpen Chatterjee's concentration inequality \eqref{eq:Chatterjee2007}. 

\begin{lm}
\label{lm:concRac}
Let $\ac{a_{i,j}}_{1\leq i,j\leq n}$ be a collection of non-negative numbers, and $\rperm$ be a random uniform permutation in $\Sn{n}$. Consider $Z_n=\sum_{i=1}^n a_{i,\rperm(i)}$. 
Then, for all $t>0$, 
\begin{equation}
\label{eq:concRac+}
\proba{\sqrt{Z_n} \geq \sqrt{\med{Z_n}} + t\sqrt{\max_{1\leq i,j\leq n}\ac{a_{i,j}}}}\leq 2 e^{-t^2/16}, 
\end{equation}
and 
\begin{equation}
\label{eq:concRac-}
\proba{\sqrt{Z_n} \leq \sqrt{\med{Z_n}} - t\sqrt{\max_{1\leq i,j\leq n}\ac{a_{i,j}}}}\leq 2 e^{-t^2/16}.
\end{equation}
In particular, one obtains the following two-sided concentration for the square root of a randomly permuted sum of non-negative numbers, 
$$\proba{\abs{\sqrt{Z_n} - \sqrt{\med{Z_n}}} > t\sqrt{\max_{1\leq i,j\leq n}\ac{a_{i,j}}}}\leq 4 e^{-t^2/16}.$$
\end{lm}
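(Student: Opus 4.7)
The plan is to apply Talagrand's random‑permutation inequality (Theorem \ref{thm:Talagrand}) to the two median level sets
\[
A := \{\tau\in\Sn{n} : Z_n(\tau)\leq \med{Z_n}\}
\quad\text{and}\quad
B := \{\tau\in\Sn{n} : Z_n(\tau)\geq \med{Z_n}\},
\]
each of $P_n$-measure at least $1/2$. With $m_a := \max_{i,j} a_{i,j}$, it suffices to establish the two deterministic implications: $\sqrt{Z_n(\perm)} - \sqrt{\med{Z_n}} \geq t\sqrt{m_a}$ implies $f(A,\perm)\geq t^2$, and $\sqrt{\med{Z_n}} - \sqrt{Z_n(\perm)} \geq t\sqrt{m_a}$ implies $f(B,\perm)\geq t^2$. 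Both tails, and then the two-sided bound by union, will follow from \eqref{eq:Talagrand-Markov} combined with $1/P_n(\cdot)\leq 2$.

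For the upper tail \eqref{eq:concRac+}, I would exploit non-negativity of the $a_{i,j}$ to produce a bound which is \emph{linear} in the defect vector $s^\tau := (\mathds{1}_{\tau(i)\neq\perm(i)})_i\in U_A(\perm)$, with coefficients depending only on $\perm$: for each $\tau\in A$,
\[
Z_n(\perm)-Z_n(\tau) \;=\; \sum_{i\,:\,s^\tau_i=1}\!\bigl(a_{i,\perm(i)}-a_{i,\tau(i)}\bigr) \;\leq\; \sum_{i=1}^n s^\tau_i\, a_{i,\perm(i)}.
\]
Componentwise monotonicity ($s\geq s^\tau$ for any $s\in U_A(\perm)$ admitting the witness $\tau$) and linearity in $s$ extend this uniformly to $Z_n(\perm)-\med{Z_n}\leq \sum_i v_i\, a_{i,\perm(i)}$ for every $v\in V_A(\perm)$. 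A single Cauchy-Schwarz step, combined with $\sum_i a_{i,\perm(i)}^2\leq m_a\, Z_n(\perm)$ (which is a direct consequence of $0\leq a_{i,j}\leq m_a$), gives $Z_n(\perm)-\med{Z_n}\leq \sqrt{m_a\, Z_n(\perm)\, f(A,\perm)}$. Factoring the left-hand side as $(\sqrt{Z_n(\perm)}-\sqrt{\med{Z_n}})(\sqrt{Z_n(\perm)}+\sqrt{\med{Z_n}})$ and using $\sqrt{Z_n(\perm)}\leq \sqrt{Z_n(\perm)}+\sqrt{\med{Z_n}}$ then yields the required implication.

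The lower tail \eqref{eq:concRac-} is handled by the same template with $B$ in place of $A$, but the linearization step is where I expect the main obstacle. The naive analogue $Z_n(\tau)-Z_n(\perm)\leq \sum_i s^\tau_i\, a_{i,\tau(i)}$ has coefficients depending on $\tau$, so the convex-hull extension used above no longer applies verbatim; in fact an elementary check using transpositions shows that no vector $c(\perm)$ with $\|c(\perm)\|_2\leq \sqrt{m_a}$ can linearly dominate $\sqrt{Z_n(\tau)}-\sqrt{Z_n(\perm)}$ uniformly over $\tau\in B$. My plan is therefore first to fall back to the weaker non-linear comparison $\sqrt{Z_n(\tau)}-\sqrt{Z_n(\perm)}\leq \sqrt{m_a}\,\|s^\tau\|_2$, derived via Cauchy-Schwarz in the form $\sum_i s^\tau_i a_{i,\tau(i)}\leq \|s^\tau\|_2\sqrt{m_a\, Z_n(\tau)}$ followed by an algebraic rearrangement that factors out $\sqrt{Z_n(\tau)}$, and then to pass to $V_B(\perm)$ using the identity $\|s\|_2^2 = \|s\|_1$ on $\{0,1\}^n$ together with the linearity of $\|\cdot\|_1$ in $v$, possibly supplemented by the symmetry $a_{i,j}\leftrightarrow m_a - a_{i,j}$ (which swaps the two tails of $\sqrt{Z_n}$) in order to recover the sharp constant. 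Once $(\sqrt{\med{Z_n}}-\sqrt{Z_n(\perm)})_+\leq \sqrt{m_a\, f(B,\perm)}$ is in place, \eqref{eq:concRac-} follows from \eqref{eq:Talagrand-Markov} and a final union bound gives the two-sided estimate.
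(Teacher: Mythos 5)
Your upper-tail argument is sound and is essentially the paper's: for $A=\{\tau\,:\,Z_n(\tau)\leq\med{Z_n}\}$ the inequality $Z_n(\perm)-Z_n(\tau)\leq \sum_i s_i\,a_{i,\perm(i)}$ is linear in $s$, hence extends to the convex hull $V_A(\perm)$, and Cauchy--Schwarz plus $\sum_i a_{i,\perm(i)}^2\leq m_a Z_n(\perm)$ gives $Z_n(\perm)-\med{Z_n}\leq\sqrt{m_a Z_n(\perm) f(A,\perm)}$; solving for $\sqrt{Z_n(\perm)}$ (the paper completes the square, you factor and cancel, same thing) yields the implication and hence \eqref{eq:concRac+}.

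The lower-tail argument, however, has a real gap that your proposed fixes do not close. As you noticed, for $B=\{\tau\,:\,Z_n(\tau)\geq\med{Z_n}\}$ the bound $Z_n(\tau)-Z_n(\perm)\leq\sum_i s_i^\tau a_{i,\tau(i)}$ has coefficients depending on $\tau$, so the only inequality you can extract per element of $U_B(\perm)$ is $\bigl(\sqrt{\med{Z_n}}-\sqrt{Z_n(\perm)}\bigr)_+\leq\sqrt{m_a}\,\|s\|_2$, which is \emph{not} linear in $s$ and therefore does not pass to $V_B(\perm)$. The two proposed repairs both fail: the identity $\|s\|_2^2=\|s\|_1$ on $\{0,1\}^n$ together with linearity of $\|\cdot\|_1$ only gives a lower bound on $\|v\|_1$, but $\|v\|_2^2\leq\|v\|_1$ on $[0,1]^n$ goes the wrong way (e.g.\ $s^1=(1,0,\dots)$, $s^2=(0,1,0,\dots)$, $v=\tfrac12(s^1+s^2)$ has $\|v\|_2^2=\tfrac12<1=\min_j\|s^j\|_2^2$). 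The complementation $a_{i,j}\mapsto m_a-a_{i,j}$ swaps the two tails of $Z_n$, not of $\sqrt{Z_n}$: it sends $Z_n$ to $n m_a-Z_n$, and $\sqrt{n m_a-Z_n}$ bears no simple relation to $\sqrt{\med{Z_n}}-\sqrt{Z_n}$, nor is the new maximum $m_a-\min_{i,j}a_{i,j}$ equal to $m_a$ in general. So the desired implication $\sqrt{\med{Z_n}}-\sqrt{Z_n(\perm)}\geq t\sqrt{m_a}\implies f(B,\perm)\geq t^2$ remains unproved, and the rest of your lower-tail derivation rests on it.

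The paper avoids $B$ altogether: it reuses the \emph{lower} level set $A=\{\tau : Z_n(\tau)\leq C_A\}$, for which the linearization always works, but shifts the threshold to $C_A=\bigl(\sqrt{\med{Z_n}}-t\sqrt{m_a}\bigr)^2$. Step~1 then delivers the product bound
\[
\proba{\sqrt{Z_n}\geq\sqrt{C_A}+t\sqrt{m_a}}\cdot\proba{Z_n\in A}\leq e^{-t^2/16},
\]
and with this choice of $C_A$ the first factor equals $\proba{Z_n\geq\med{Z_n}}\geq 1/2$, so the bound is read \emph{backwards} to conclude $\proba{Z_n\in A}=\proba{\sqrt{Z_n}\leq\sqrt{\med{Z_n}}-t\sqrt{m_a}}\leq 2e^{-t^2/16}$. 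This is the idea you are missing: one and the same Step~1 inequality serves both tails depending on which of the two factors you lower bound by $1/2$.
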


The idea of the proof is the same that the one of Adamczak et al. in \cite[Theorem 3.1]{AdamczakChafaiWolff2014}, but with a sum instead of a convex Lipschitz function. 
In a similar way, it is based on Talagrand's inequality for random permutations recalled in Theorem \ref{thm:Talagrand}.

\paragraph{}
In the following are presented two concentration inequalities in the non-negative case; the first one around the median, and the second one around the mean. It is well known that both are equivalent up to constants, but here, both are detailed in order to give the order of magnitude of the constants. The transition from the median to the mean can be obtained thanks to Ledoux' trick in the proof of \cite[Proposition 1.8]{Ledoux2005} allowing to reduce exponential concentration inequalities around any constant $m$ (corresponding in our case to $\med{Z_n}$) to similar inequalities around the mean. This trick consists in using the exponentially fast decrease around $m$ to upper bound the difference between $m$ and the mean. Yet, this approach leads to drastic multiplicative constants (of the order $8e^{16\pi}$ as shown in \cite{Albert2015PhD}). Better constants can be deduced from the following lemma. 
\begin{lm}
\label{lm:ineqEspMedVar}
For any real valued random variable $X$,
$$\abs{\esp{X}-\med{X}} \leq \sqrt{\var{X}}.$$
\end{lm}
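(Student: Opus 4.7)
The plan is to chain three standard inequalities, exploiting the variational characterizations of both the mean and the median, namely that the mean minimizes $t\mapsto \esp{(X-t)^2}$ while the median minimizes $t\mapsto \esp{\abs{X-t}}$.

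First, I would write
$$\abs{\esp{X}-\med{X}} = \abs{\esp{X-\med{X}}} \leq \esp{\abs{X-\med{X}}}$$
by Jensen's inequality applied to the convex function $\abs{\cdot}$. The right-hand side is still expressed around the median, so the next step is to switch to the mean.

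Second, I would invoke the fact that the median minimizes the $L^1$ deviation, i.e.\ for all $t\in\R$, $\esp{\abs{X-\med{X}}}\leq \esp{\abs{X-t}}$. In particular, taking $t=\esp{X}$ yields
$$\esp{\abs{X-\med{X}}} \leq \esp{\abs{X-\esp{X}}}.$$
This minimization property is the only non-trivial ingredient; it can be established either by a direct sign-change argument using the defining inequalities $\proba{X\geq \med{X}}\geq 1/2$ and $\proba{X\leq \med{X}}\geq 1/2$, or by recognizing $t\mapsto \esp{\abs{X-t}}$ as convex with subdifferential containing zero exactly at medians. I expect this to be the only step worth a short justification; everything else is a one-line application of Jensen.

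Finally, I would conclude by Cauchy--Schwarz (or Jensen applied to the concave function $\sqrt{\cdot}$):
$$\esp{\abs{X-\esp{X}}} \leq \sqrt{\esp{(X-\esp{X})^2}} = \sqrt{\var{X}}.$$
Combining the three inequalities yields $\abs{\esp{X}-\med{X}}\leq \sqrt{\var{X}}$. No integrability beyond $X\in L^2$ is needed, which matches the intended use in the article, since otherwise the right-hand side is infinite and the inequality is vacuous.
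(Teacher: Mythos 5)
Your proof is correct and follows exactly the same three-step chain as the paper: Jensen's inequality to pass to $\esp{\abs{X-\med{X}}}$, the $L^1$-minimizing property of the median to replace $\med{X}$ by $\esp{X}$, and a final Cauchy--Schwarz (or Jensen) to bound the $L^1$ deviation by the standard deviation. The only cosmetic difference is that you flag the median-as-$L^1$-minimizer fact as the one step deserving justification, whereas the paper simply recalls it without proof.
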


In particular, we obtain the following result. 
\begin{prop}
\label{prop:concPosMed}
Let $\ac{a_{i,j}}_{1\leq i,j\leq n}$ be a collection of non-negative numbers and $\rperm$ be a random uniform permutation in $\Sn{n}$. Consider $Z_n=\sum_{i=1}^n a_{i,\rperm(i)}$. 
Then, for all $x>0$, 
\begin{equation}
\label{eq:concPosMed}
\proba{\abs{Z_n - \med{Z_n}} > \sqrt{\med{\sum_{i=1}^n a_{i,\rperm(i)}^2}x} + x\max_{1\leq i,j\leq n}\ac{a_{i,j}}}\leq 8 \exp\pa{\frac{-x}{16}}.
\end{equation}
\end{prop}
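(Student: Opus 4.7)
The plan is to ``iterate'' Lemma~\ref{lm:concRac}: apply it first to the non-negative array $(a_{i,j}^2)_{1\le i,j\le n}$ in order to concentrate the random variable $W_n:=\sum_{i=1}^n a_{i,\rperm(i)}^2$ around its median, and then combine this auxiliary estimate with a direct Talagrand-style treatment of $Z_n$. Setting $M:=\max_{1\le i,j\le n}a_{i,j}$ and applying Lemma~\ref{lm:concRac} to $(a_{i,j}^2)$ (for which the relevant maximum is $M^2$), with $t=\sqrt{x}$, gives the first-level estimate
\[
 P_n\Bigl(\sqrt{W_n}>\sqrt{\med{W_n}}+\sqrt{x}\,M\Bigr)\le 2e^{-x/16}.
\]

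For the upper tail I would take $A=\{\perm:Z_n(\perm)\le\med{Z_n}\}$, so that $P_n(A)\ge 1/2$. For $\perm\notin A$ and $\tau\in A$, with $s_i=\mathds{1}_{\tau(i)\ne\perm(i)}$, the non-negativity of $(a_{i,j})$ yields
\[
 Z_n(\perm)-\med{Z_n}\le Z_n(\perm)-Z_n(\tau)=\sum_i s_i\bigl(a_{i,\perm(i)}-a_{i,\tau(i)}\bigr)\le\sum_i s_i\,a_{i,\perm(i)}.
\]
The right-hand side is linear in $s$ with coefficients depending only on $\perm$, so the inequality passes to every convex combination $v=\sum_k\lambda_k s^{(k)}\in V_A(\perm)$, and Cauchy--Schwarz gives $Z_n(\perm)-\med{Z_n}\le\|v\|_2\sqrt{W_n(\perm)}$; infimizing over $v$ yields $Z_n(\perm)-\med{Z_n}\le\sqrt{f(A,\perm)\,W_n(\perm)}$. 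Theorem~\ref{thm:Talagrand} then provides $P_n(f(A,\perm)>x)\le 2e^{-x/16}$; on the intersection of this event with the concentration of $\sqrt{W_n}$,
\[
 Z_n-\med{Z_n}\le\sqrt{x}\bigl(\sqrt{\med{W_n}}+\sqrt{x}\,M\bigr)=\sqrt{x\,\med{W_n}}+xM,
\]
and a union bound controls the upper tail by $4e^{-x/16}$.

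For the lower tail the symmetric strategy uses $A'=\{Z_n\ge\med{Z_n}\}$ and starts from
\[
 \med{Z_n}-Z_n(\perm)\le\sum_i s_i\,a_{i,\tau(i)}\le\|s\|_2\sqrt{W_n(\tau)},
\]
the last step again by Cauchy--Schwarz. The main obstacle, which makes this direction more delicate than the upper tail, is that $\sqrt{W_n(\tau)}$ depends on $\tau$, so the passage to convex combinations $v\in V_{A'}(\perm)$ is no longer automatic. I would address this by intersecting $A'$ with the good event $B:=\{W_n\le(\sqrt{\med{W_n}}+\sqrt{x}\,M)^2\}$: one may assume $x>16\ln 8$ (else $8e^{-x/16}\ge 1$ and~\eqref{eq:concPosMed} is trivial), and then $P_n(A'\cap B)\ge 1/2-2e^{-x/16}>1/4$, while the right-hand side above becomes $\|s\|_2(\sqrt{\med{W_n}}+\sqrt{x}\,M)$ uniformly in $\tau\in A'\cap B$. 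Applying Theorem~\ref{thm:Talagrand} to $A'\cap B$ (at the cost of a prefactor $P_n(A'\cap B)^{-1}\le 4$) then gives the matching lower tail with probability $4e^{-x/16}$, and summing both tails produces the announced bound $8e^{-x/16}$.
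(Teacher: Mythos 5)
Your upper-tail argument is correct and is, in substance, the paper's own: apply Lemma~\ref{lm:concRac} to the squared array $(a_{i,j}^2)$ to concentrate $W_n:=\sum_{i=1}^n a_{i,\rperm(i)}^2$, then run the Talagrand contraposition for $A=\{\tau:Z_n(\tau)\le\med{Z_n}\}$, using the linearity of $s\mapsto\sum_i s_i\,a_{i,\rperm(i)}$ (with $\tau$-independent coefficients $a_{i,\rperm(i)}$) to pass to the convex hull before invoking Cauchy--Schwarz.

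The lower-tail argument has a gap that your good event $B$ does not close. You correctly note that the factor $\sqrt{W_n(\tau)}$ depends on $\tau$, and on $A'\cap B$ it can be replaced by the constant $C:=\sqrt{\med{W_n}}+\sqrt{x}\,M$. But the resulting bound $\med{Z_n}-Z_n(\perm)\le\|s\|_2\,C$ holds only for the $\{0,1\}$-valued elements $s\in U_{A'\cap B}(\perm)$, whereas $f(A'\cap B,\perm)$ is the infimum of $\|v\|_2^2$ over the \emph{convex hull} $V_{A'\cap B}(\perm)$. Convexity of $\|\cdot\|_2$ gives $\|\sum_k\lambda_k s^{(k)}\|_2\le\sum_k\lambda_k\|s^{(k)}\|_2$, which is the wrong direction: the hull may contain points of far smaller norm than any vertex (if $e_1,\dots,e_n\in U_{A'\cap B}(\perm)$, each vertex has norm at least $1$, while the barycentre has norm $1/\sqrt n$). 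So the implicit step ``$f(A'\cap B,\perm)<x \Rightarrow \med{Z_n}-Z_n(\perm)<\sqrt{x}\,C$'' is not justified. In the upper tail this problem does not occur because the bound is \emph{linear} in $s$ with coefficients $a_{i,\rperm(i)}$ that do not depend on $\tau$, so it extends to convex combinations before Cauchy--Schwarz is applied; in the lower tail the coefficients $a_{i,\tau(i)}$ depend on $\tau$, and there is no linear surrogate.

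The paper circumvents this by never comparing $\perm$ to a set of the form $\{Z_n\ge\med{Z_n}\}$. It keeps one-sided sets $A=\{\tau:Z_n(\tau)\le C_A\}$ throughout, runs the (upper-tail) preliminary analysis with a free threshold $C_A$ and a free auxiliary level $v$ to obtain
\begin{equation*}
\proba{Z_n\ge C_A+t\left(\sqrt{\med{W_n}}+vM\right)}\;\le\;\frac{e^{-t^2/16}}{\proba{Z_n\in A}}\;+\;2e^{-v^2/16},
\end{equation*}
and then, for the lower tail, chooses $C_A=\med{Z_n}-t\big(\sqrt{\med{W_n}}+vM\big)$ so that the left-hand event becomes $\{Z_n\ge\med{Z_n}\}$, of probability at least $\tfrac{1}{2}$. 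The inequality is then \emph{solved for} $\proba{Z_n\in A}$, which is precisely the lower-tail probability; the choice $v=4\sqrt{\ln 8}$ makes $\tfrac{1}{2}-2e^{-v^2/16}=\tfrac{1}{4}$ and yields $4e^{-t^2/16}$, and the mismatch between $v$ and $t$ is absorbed via the $t\vee 4\sqrt{\ln 8}$ device together with the trivial bound for small $t$. This ``inversion'' of Talagrand's inequality is the missing idea in your proposal, and it is needed exactly because the naive symmetric argument for the lower tail cannot pass from vertices to the convex hull.
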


Since in many applications, the concentration around the mean is more adapted, the following proposition shows that one may obtain a similar behavior around the mean, at the cost of higher constants.

\begin{prop}
\label{prop:concPosMoy}
Let $\ac{a_{i,j}}_{1\leq i,j\leq n}$ be a collection of non-negative numbers, and $\rperm$ be a random uniform permutation in $\Sn{n}$. Consider $Z_n=\sum_{i=1}^n a_{i,\rperm(i)}$. 

Then, for all $x>0$, 
\begin{equation}
\label{eq:concPosMoy}
\proba{\abs{Z_n-\esp{Z_n}}\geq 2\sqrt{\pa{\frac{1}{n}\sum_{i,j=1}^n a_{i,j}^2}x} + \max_{1\leq i,j\leq n} \ac{a_{i,j}} x}\leq 8e^{1/16} \exp\pa{-\frac{x}{16}}.
\end{equation}
\end{prop}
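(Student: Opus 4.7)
The plan is to derive Proposition~\ref{prop:concPosMoy} from Proposition~\ref{prop:concPosMed} through Lemma~\ref{lm:ineqEspMedVar}. Combining the triangle inequality with Lemma~\ref{lm:ineqEspMedVar} gives
$$\abs{Z_n - \esp{Z_n}} \leq \abs{Z_n - \med{Z_n}} + \sqrt{\var{Z_n}},$$
so any deviation of $Z_n$ from its mean forces a deviation from its median, inflated by at most $\sqrt{\var{Z_n}}$. Throughout, set $V := \frac{1}{n}\sum_{i,j=1}^n a_{i,j}^2$.

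Two preliminary estimates are needed. First, since $\sum_{i=1}^n a_{i,\rperm(i)}^2$ is non-negative with expectation exactly $V$, Markov's inequality yields $\med{\sum_{i=1}^n a_{i,\rperm(i)}^2} \leq 2V$, and hence $\sqrt{\med{\sum_{i=1}^n a_{i,\rperm(i)}^2}\, y} \leq \sqrt{2Vy}$ for any $y > 0$. Second, Hoeffding's variance formula $\var{Z_n} = \frac{1}{n-1}\sum_{i,j=1}^n d_{i,j}^2$ (with $d_{i,j}$ as in~\eqref{eq:def_dij}) together with the orthogonal decomposition $\sum_{i,j} a_{i,j}^2 = \sum_{i,j} d_{i,j}^2 + \sum_{i,j}(a_{i,j}-d_{i,j})^2$ gives $\sum_{i,j} d_{i,j}^2 \leq nV$, hence $\var{Z_n} \leq \frac{n}{n-1}V \leq 2V$ for $n \geq 2$.

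The main trick is a judicious parameter shift: for $x > 1$, apply Proposition~\ref{prop:concPosMed} with parameter $x-1$ in place of $x$. This produces the exponential factor $8e^{-(x-1)/16} = 8e^{1/16}e^{-x/16}$, matching the target constant exactly. On the complement of the event from Proposition~\ref{prop:concPosMed}, the two estimates above yield
$$\abs{Z_n - \esp{Z_n}} \leq \sqrt{2V(x-1)} + (x-1)\max_{1\leq i,j\leq n}\ac{a_{i,j}} + \sqrt{2V} = \sqrt{2V}\pa{1 + \sqrt{x-1}} + (x-1)\max_{1\leq i,j\leq n}\ac{a_{i,j}}.$$
The elementary identity $(x-2)^2 \geq 0$ rearranges to $\sqrt{2}\pa{1 + \sqrt{x-1}} \leq 2\sqrt{x}$, which closes the argument with the bound $2\sqrt{Vx} + x\max_{1\leq i,j\leq n}\ac{a_{i,j}}$.

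The case $x \leq 1$ is handled trivially: $8e^{1/16}e^{-x/16} \geq 8e^{1/16}e^{-1/16} = 8 \geq 1$, so the inequality holds vacuously. The only mild obstacle is pinpointing the shift $x \mapsto x-1$ that simultaneously produces the $e^{1/16}$ prefactor and permits the final algebraic inequality to close with a clean coefficient $2$; everything else reduces to Markov's inequality, the projection bound for doubly centered matrices, and straightforward triangle inequalities.
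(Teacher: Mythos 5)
Your proof is correct, and the route differs from the paper's in two meaningful places. First, for the bound $\var{Z_n}\leq 2V$ (needed in Lemma~\ref{lm:ineqEspMedVar}), you invoke Hoeffding's exact formula $\var{Z_n}=\frac{1}{n-1}\sum_{i,j}d_{i,j}^2$ together with the Pythagorean identity for the doubly-centered array $d$; the paper instead carries out a direct expansion of $\var{Z_n}$ over the indicator covariances $E_{i,j,k,l}$ and controls the cross term by Cauchy--Schwarz. Both give $\var{Z_n}\leq 2V$, but yours yields the slightly sharper $\frac{n}{n-1}V$ and is conceptually cleaner (an orthogonal projection argument rather than a bare computation). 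Second, and more substantially, the paper absorbs the additive constant $\sqrt{2V}$ from Lemma~\ref{lm:ineqEspMedVar} into the exponential by introducing the Bernstein-inversion function $h_1(u)=1+u-\sqrt{1+2u}$, exploiting its convexity via $h_1(\tfrac{t}{\ca}-\sqrt{2\cc})\geq 2h_1(\tfrac{t}{2\ca})-h_1(\sqrt{2\cc})$ and then splitting into cases $t\gtrless\sqrt{2V}$; your route is a simple parameter shift $x\mapsto x-1$ applied directly to the median inequality, which produces the prefactor $e^{1/16}$ immediately, and the elementary bound $\sqrt{2}(1+\sqrt{x-1})\leq 2\sqrt{x}$ (equivalent to $(x-2)^2\geq 0$) then closes the coefficient on the $\sqrt{Vx}$ term without any convexity machinery. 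Your approach is more elementary and, for this specific target constant, arguably easier to follow; the paper's $h_1$ approach is more flexible in principle (it delivers Corollary~\ref{coro:concPosMoy} as a byproduct via $h_1(u)\geq u^2/(2(1+u))$), so the paper reuses that machinery downstream. The only cosmetic point is the passage from a strict inequality (which Proposition~\ref{prop:concPosMed} delivers) to the non-strict one in the statement; this is handled by the usual monotone limit in $x$, and the paper is equally silent on it.
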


This concentration inequality is called a Bernstein-type inequality restricted to non-negative sums, due to its resemblance to the standard Bernstein inequality, as recalled in Theorem \ref{thm:Bernstein_ineq_Massart07}. The main difference here lies in the fact that the random variables in the sum are not independent. 
Moreover, this inequality implies a more popular form of Bernstein's inequality stated in Corollary \ref{coro:concPosMoy}. 
\begin{coro}
\label{coro:concPosMoy}
With the same notation and assumptions as in Proposition \ref{prop:concPosMoy}, for all $t>0$, 
\begin{equation}
\label{eq:concPosMoy2}
\proba{\abs{Z_n-\esp{Z_n}}\geq t}\leq 8e^{1/16} \exp\pa{\frac{-t^2}{16\pa{4\frac{1}{n}\sum_{i,j=1}^n a_{i,j}^2 + 2\max_{1\leq i,j\leq n} \ac{a_{i,j}} t}}}.
\end{equation}
\end{coro}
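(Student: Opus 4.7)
The plan is to deduce Corollary \ref{coro:concPosMoy} directly from Proposition \ref{prop:concPosMoy} by the classical change of variable that converts a Bernstein-type bound of the form $2\sqrt{vx}+Mx$ into the more standard form $t^2/(v+Mt)$. Set $v=\frac{1}{n}\sum_{i,j=1}^n a_{i,j}^2$ and $M=\max_{1\leq i,j\leq n}\ac{a_{i,j}}$. Fix $t>0$ and introduce the unique $x>0$ solving $t=2\sqrt{vx}+Mx$; Proposition \ref{prop:concPosMoy} then gives immediately
$$\proba{\abs{Z_n-\esp{Z_n}}\geq t}\leq 8e^{1/16}\exp\pa{-\tfrac{x}{16}},$$
so all that remains is to lower bound the solution $x=x(t)$.

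To carry this out, I would view $t=2\sqrt{vx}+Mx$ as a quadratic in the unknown $\sqrt{x}$, namely $M\pa{\sqrt{x}}^2+2\sqrt{v}\sqrt{x}-t=0$, and keep the positive root
$$\sqrt{x}=\frac{\sqrt{v+Mt}-\sqrt{v}}{M}.$$
Rationalizing the numerator (multiplying top and bottom by $\sqrt{v+Mt}+\sqrt{v}$) yields the closed form
$$x=\frac{t^2}{\pa{\sqrt{v+Mt}+\sqrt{v}}^2}.$$

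The final step is a one-line lower bound: from $(a+b)^2\leq 2(a^2+b^2)$ applied with $a=\sqrt{v+Mt}$ and $b=\sqrt{v}$, one gets $\pa{\sqrt{v+Mt}+\sqrt{v}}^2\leq 2\pa{(v+Mt)+v}=4v+2Mt$. Hence $x\geq t^2/(4v+2Mt)$, and substituting back,
$$\proba{\abs{Z_n-\esp{Z_n}}\geq t}\leq 8e^{1/16}\exp\pa{-\tfrac{x}{16}}\leq 8e^{1/16}\exp\pa{-\frac{t^2}{16\pa{4v+2Mt}}},$$
which is exactly \eqref{eq:concPosMoy2}. There is no real obstacle here: the proof is entirely algebraic given Proposition \ref{prop:concPosMoy}, and the constants $4$ and $2$ in the denominator come directly from the elementary inequality $(a+b)^2\leq 2(a^2+b^2)$, which is responsible for the (harmless) loss of tightness in the conversion.
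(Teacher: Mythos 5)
Your proof is correct, and it takes a genuinely different, more self-contained route than the paper. The paper does not invert the bound of Proposition \ref{prop:concPosMoy} directly; instead it reaches back into the proof of that proposition, specifically to the intermediate display \eqref{eq:toust} stated in terms of the function $h_1(u)=1+u-\sqrt{1+2u}$, and then applies the classical estimate $h_1(u)\geq u^2/\bigl(2(1+u)\bigr)$ exactly as in \cite[Corollary 2.10]{Massart2007}. You instead treat Proposition \ref{prop:concPosMoy} as a black box: you solve $t=2\sqrt{vx}+Mx$ for $x$ by the quadratic formula, rationalize to get $x=t^2/\bigl(\sqrt{v+Mt}+\sqrt{v}\bigr)^2$, and finish with $(a+b)^2\leq 2(a^2+b^2)$. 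This is more elementary (no auxiliary function, no appeal to the properties of $h_1$) and makes the provenance of the constants transparent: both the factor $4$ on $v$ and the factor $2$ on $Mt$ come from the single elementary inequality, and the two approaches land on exactly the same denominator, since $16(4v+2Mt)=32(2v+Mt)$. Incidentally, the last line of the paper's displayed chain reads $32(V+Mt)$ where, tracing the algebra from the preceding line $32(2\ca^2\cc+\ca\cc t)$, it should read $32(2V+Mt)$; your computation confirms that the stated corollary is the one actually proved.
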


\emph{Comment:}
Recall Chatterjee's result in \cite[Proposition 2.1]{Chatterjee2007}, quoted in Theorem \ref{thm:Chatterjee2007}, which can easily be rewritten with our notation, and for a collection of non-negative numbers not necessarily in $[0,1]$, by 
$$\forall t>0, \quad \proba{\abs{Z_n-\esp{Z_n}}\geq t}\leq 2\exp\pa{\frac{-t^2}{4M_a\frac{1}{n}\sum_{i,j=1}^n a_{i,j} + 2M_a t}},$$
where $M_a$ denotes the maximum $\max_{1\leq i,j\leq n} \ac{a_{i,j}}$. 
As mentioned in \cite{BercuDelyonRio2015}, the inequality in \eqref{eq:concPosMoy2} is sharper up to constants, thanks to the quadratic term since $\sum_{i,j=1}^n a_{i,j}^2 \leq M_a \sum_{i,j=1}^n a_{i,j}$ always holds.

\subsection{Concentration of permuted sums in the general case}
\label{sct:sum_general_case}

In this section, the collection of numbers $\ac{a_{i,j}}_{1\leq i,j\leq n}$ is no longer assumed to be non-negative. The following general concentration inequality for randomly permuted sums directly derives from Proposition \ref{prop:concPosMoy}. 
\begin{thm}
\label{thm:concQcqMoy}
Let $\ac{a_{i,j}}_{1\leq i,j\leq n}$ be a collection of any real numbers, and $\rperm$ be a random uniform permutation in $\Sn{n}$. Consider $Z_n=\sum_{i=1}^n a_{i,\rperm(i)}$. 
Then, for all $x>0$, 
\begin{equation}
\proba{\abs{Z_n-\esp{Z_n}}\geq 2\sqrt{2\pa{\frac{1}{n}\sum_{i,j=1}^n a_{i,j}^2}x} + 2\max_{1\leq i,j\leq n} \ac{\abs{a_{i,j}}} x}\leq 16e^{1/16} \exp\pa{-\frac{x}{16}}.
\end{equation}
\end{thm}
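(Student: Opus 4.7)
The plan is to reduce to the non-negative case treated in Proposition \ref{prop:concPosMoy} by splitting each $a_{i,j}$ into its positive and negative parts. Set $a_{i,j}^+ = \max(a_{i,j},0)$ and $a_{i,j}^- = \max(-a_{i,j},0)$, so that $a_{i,j} = a_{i,j}^+ - a_{i,j}^-$. Writing $Z_n^\pm = \sum_{i=1}^n a_{i,\rperm(i)}^\pm$, one has $Z_n = Z_n^+ - Z_n^-$, and each $Z_n^\pm$ is a sum of the form required by Proposition \ref{prop:concPosMoy}, with the \emph{same} random permutation $\rperm$.

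First I would apply Proposition \ref{prop:concPosMoy} separately to the collections $\{a_{i,j}^+\}$ and $\{a_{i,j}^-\}$, with a common parameter $x>0$. Each application yields an event of probability at most $8e^{1/16}e^{-x/16}$ on which $|Z_n^\pm - \esp{Z_n^\pm}|$ is bounded by $2\sqrt{(\frac{1}{n}\sum_{i,j}(a_{i,j}^\pm)^2) x} + \max_{i,j}\{a_{i,j}^\pm\}\,x$. A union bound combined with the triangle inequality $|Z_n - \esp{Z_n}| \leq |Z_n^+ - \esp{Z_n^+}| + |Z_n^- - \esp{Z_n^-}|$ then yields, with probability at least $1-16e^{1/16}e^{-x/16}$,
\begin{equation*}
|Z_n - \esp{Z_n}| \leq 2\sqrt{x}\left(\sqrt{\tfrac{1}{n}\textstyle\sum_{i,j}(a_{i,j}^+)^2} + \sqrt{\tfrac{1}{n}\textstyle\sum_{i,j}(a_{i,j}^-)^2}\right) + \left(\max_{i,j} a_{i,j}^+ + \max_{i,j} a_{i,j}^-\right)x.
\end{equation*}

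The last step is simplification. Since for every $(i,j)$ exactly one of $a_{i,j}^+, a_{i,j}^-$ vanishes, one has the pointwise identity $(a_{i,j}^+)^2 + (a_{i,j}^-)^2 = a_{i,j}^2$. Combined with the elementary inequality $\sqrt{u}+\sqrt{v} \leq \sqrt{2(u+v)}$, this controls the square-root terms by $\sqrt{2 \tfrac{1}{n}\sum_{i,j} a_{i,j}^2}$. For the linear term, each of $\max_{i,j} a_{i,j}^\pm$ is bounded by $\max_{i,j}|a_{i,j}|$, giving a factor $2$. Putting these together produces exactly the bound in the statement.

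There is no real obstacle here; the whole argument is essentially a clean union bound, and the main thing to watch is keeping track of the factor $\sqrt{2}$ from $\sqrt{u}+\sqrt{v}\leq\sqrt{2(u+v)}$ and the factor $2$ in front of $\max_{i,j}|a_{i,j}|\,x$, so that one recovers the stated constants $2\sqrt{2}$ and $2$ together with the probability constant $16e^{1/16}$.
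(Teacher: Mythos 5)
Your proof is correct and follows the paper's argument essentially verbatim: decompose $a_{i,j}=a_{i,j}^+-a_{i,j}^-$, apply Proposition \ref{prop:concPosMoy} to each of $Z_n^\pm$, then combine via the triangle inequality and a union bound, using $(a_{i,j}^+)^2+(a_{i,j}^-)^2=a_{i,j}^2$ with $\sqrt{u}+\sqrt{v}\le\sqrt{2(u+v)}$ and $\max a_{i,j}^\pm\le\max|a_{i,j}|$ to collect the constants. Nothing is missing.
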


Once again, the obtained inequality is a Bernstein-type inequality. 
Moreover, it is also possible to obtain a more popular form of Bernstein-type inequalities applying the same trick based on the non-negative and the negative parts from Corollary \ref{coro:concPosMoy}. 
\begin{coro}
\label{coro:concQcqMoy}
With the same notation as in Theorem \ref{thm:concQcqMoy}, for all $t>0$, 
\begin{equation*}
\proba{\abs{Z_n-\esp{Z_n}}\geq t}\leq 16e^{1/16} \exp\pa{\frac{-t^2}{256\pa{\var{Z_n} + \max_{1\leq i,j\leq n} \ac{\abs{a_{i,j}}} t}}}.
\end{equation*}
\end{coro}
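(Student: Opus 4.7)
The plan is to mimic, at the level of the $t$-parameterized form, the positive/negative splitting used to pass from Proposition \ref{prop:concPosMoy} to Theorem \ref{thm:concQcqMoy}, but to apply that splitting to the doubly-centered array $d_{i,j}$ from \eqref{eq:def_dij} rather than to $a_{i,j}$ itself. The reason is that Corollary \ref{coro:concPosMoy} is phrased in terms of $\frac{1}{n}\sum a_{i,j}^2$; a direct use would retain that quantity, whereas the statement here asks for $\var{Z_n}$. Reparametrizing via $d_{i,j}$ exchanges $\frac{1}{n}\sum a_{i,j}^2$ for $\var{Z_n}$, at the price of a multiplicative factor $4$ on $\max\abs{a_{i,j}}$ that the constant $256$ will absorb.

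I would first check that $\sum_i d_{i,\rperm(i)} = Z_n - \esp{Z_n}$: the three correction terms in \eqref{eq:def_dij} only involve row means, column means, and the overall mean, and their sums along any bijection $\rperm$ collapse to deterministic quantities recombining to $-\esp{Z_n}$. In particular, $\esp{\sum_i d_{i,\rperm(i)}} = 0$. From Hoeffding's variance formula recalled just after \eqref{eq:def_dij}, $\var{Z_n} = \frac{1}{n-1}\sum_{i,j} d_{i,j}^2$, hence $\frac{1}{n}\sum_{i,j} d_{i,j}^2 \leq \var{Z_n}$, and the triangle inequality on the four terms defining $d_{i,j}$ yields $\abs{d_{i,j}} \leq 4\max_{k,\ell}\abs{a_{k,\ell}}$.

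Next, I would split $d_{i,j} = d_{i,j}^{+} - d_{i,j}^{-}$ with both parts non-negative, and set $W^{\pm} = \sum_i d_{i,\rperm(i)}^{\pm}$. Each array $\ac{d_{i,j}^{\pm}}_{1\leq i,j\leq n}$ then fits the hypotheses of Corollary \ref{coro:concPosMoy}, and by construction $Z_n - \esp{Z_n} = (W^{+} - \esp{W^{+}}) - (W^{-} - \esp{W^{-}})$; the triangle inequality combined with a union bound gives
$$\proba{\abs{Z_n - \esp{Z_n}} \geq t} \leq \proba{\abs{W^{+} - \esp{W^{+}}} \geq t/2} + \proba{\abs{W^{-} - \esp{W^{-}}} \geq t/2}.$$

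Applying Corollary \ref{coro:concPosMoy} at level $t/2$ to each of $W^{\pm}$ and plugging in the bounds $\frac{1}{n}\sum(d_{i,j}^{\pm})^2 \leq \var{Z_n}$ and $\max d_{i,j}^{\pm} \leq 4\max\abs{a_{i,j}}$, the denominator inside the exponent becomes at most $16\pa{4\var{Z_n} + 4\max\abs{a_{i,j}} t} = 64\pa{\var{Z_n} + \max\abs{a_{i,j}} t}$; combined with the factor $1/4$ coming from $(t/2)^2$, each term is bounded by $8e^{1/16}\exp\pa{-t^2/\cro{256\pa{\var{Z_n} + \max\abs{a_{i,j}} t}}}$, and summing over the two signs doubles the prefactor to $16 e^{1/16}$, matching the claim. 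The only delicate point is the bookkeeping of the three compounding factors $2$ (union bound), $4$ (the estimate $\abs{d_{i,j}} \leq 4\max\abs{a_{i,j}}$), and $16$ (inherited from Corollary \ref{coro:concPosMoy}), whose product is precisely $256$.
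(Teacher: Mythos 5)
Your proof is correct and follows essentially the paper's route: the paper first derives the intermediate inequality \eqref{eq:concQcqMoyInter} for arbitrary real arrays via the $a_{i,j}^\pm$ split and then specializes it to Hoeffding's $d_{i,j}$, whereas you apply the $\pm$ split directly to $d_{i,j}$ --- the same algebra in a slightly different order, with the same bounds $\frac{1}{n}\sum d_{i,j}^2\leq\var{Z_n}$ and $\max\abs{d_{i,j}}\leq 4\max\abs{a_{i,j}}$. One small slip in your closing remark: the union bound at threshold $t/2$ actually contributes a factor $4$ (not $2$) to the denominator of the exponent, since the threshold enters squared, so the constant decomposes as $4\cdot 4\cdot 16=256$; the displayed computation just above it already gets this right.
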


\emph{Comments:} One recovers a Gaussian behavior of the centered permuted sum obtained by Hoeffding in \cite[Theorem 3]{Hoeffding1951} under the same assumptions.
Indeed, in the proof of Corollary \ref{coro:concQcqMoy}, one obtains the following intermediate result (see \eqref{eq:concQcqMoyHoeffding}), that is 
$$\proba{\abs{Z_n-\esp{Z_n}}\geq t}\leq 16e^{1/16} \exp\pa{\frac{-t^2}{64\pa{4\frac{1}{n}\sum_{i,j=1}^n d_{i,j}^2 + \max_{1\leq i,j\leq n} \ac{\abs{d_{i,j}}} t}}},$$
where the $d_{i,j}$'s are defined in \eqref{eq:def_dij}. 
Yet, $\var{Z_n}=\frac{1}{n-1}\sum_{i,j=1}^n d_{i,j}^2$ (see \cite[Theorem 2]{Hoeffding1951}). 
Hence, applying this inequality to $t=x\sqrt{\var{Z_n}} \geq x\sqrt{n^{-1}\sum_{i,j=1}^n d_{i,j}^2}$ for $x>0$ leads to 
\begin{equation*}
\proba{\abs{Z_n-\esp{Z_n}}\geq x\sqrt{\var{Z_n}}}\leq 16e^{1/16} \exp\pa{\frac{-x^2}{256 \pa{1 + \frac{\max_{1\leq i,j\leq n} \ac{\abs{d_{i,j}}}}{\sqrt{\frac{1}{n} \sum_{i,j=1}^n d_{i,j}^2}} x}}}, 
\end{equation*}
Hence, under Hoeffding's simpler condition \eqref{eq:cond_Hoeffding-pareil}, namely
$$\lim_{n\to+\infty} \frac{\max_{1\leq i,j \leq n} d_{i,j}^2}{\frac{1}{n}\sum_{i,j=1}^n d_{i,j}^2} = 0, $$
one recovers, 
$$\lim_{n\to+\infty} \proba{\abs{Z_n-\esp{Z_n}}\geq x\sqrt{\var{Z_n}}} \leq 16e^{1/16} e^{-x^2/256},$$
which is a Gaussian tail that is, up to constants, close in spirit to the one obtained by Hoeffding in \cite[Theorem 3]{Hoeffding1951}.

\section{Application to independence testing}
\label{sct:applitestindep}
\subsection{Statistical motivation}
\label{sct:motivstat}

Let $\calX$ represent a separable set. 
Given an i.i.d. $n$-sample $\X_n = (X_1,\dots,X_n)$, where each $X_i$ is a couple $(X_i^1,X_i^2)$ in $\calX^2$ with distribution $P$ of marginals $P^1$ and $P^2$, we aim at testing 
\begin{center}
the null hypothesis $(\hyp_0)$ "$P=P^1\otimes P^2$" against the alternative $(\hyp_1)$ "$P\neq P^1\otimes P^2$".
\end{center} 
The considered test statistic is defined by 
\begin{equation}
\label{eq:def_stat_T}
T_\delta(\X_n) = \frac{1}{n-1} \pa{\sum_{i=1}^n \varphi_\delta (X_i^1,X_i^2) -\frac{1}{n} \sum_{i,j=1}^n \varphi_\delta (X_i^1,X_j^2)}, 
\end{equation}
where $\varphi_\delta$ is a measurable real-valued function on $\calX^2$ potentially depending on some unknown parameter $\delta$. 
Denoting for any real-valued measurable function $g$ on $\calX^2$, 
\begin{equation}
\label{eq:not_esps_general_indep}
\esps{P}{g} = \int_{\calX^2} g\!\pa{x^1,x^2}dP\!\pa{x^1,x^2}\quad \mbox{and}\quad \esps{\indep}{g} = \int_{\calX^2} g\!\pa{x^1,x^2}dP^1\!\pa{x^1} dP^2\!\pa{x^2},
\end{equation}
one may notice that, $T_\delta(\X_n)$ is an unbiased estimator of 
$$
\esp{T_\delta(\X_n)}=\esps{P}{\varphi_\delta} - \esps{\indep}{\varphi_\delta}, 
$$
which is equal to $0$ under $(\hyp_0)$. 
For more details on the choice of the test statistic, the interested reader can refer to \cite{AlbertBouretFromontReynaud2015} (motivated by synchrony detection in neuroscience for instance). The particular case where $\calX=[0,1]$ and the $\varphi_\delta$ are Haar wavelets is studied in \cite[Chaper 4]{Albert2015PhD}. Notice that in this case, the Lispchitz assumptions of Adamczak et al (see Theorem \ref{thm:AdamczakChafaiWolff}) are not satisfied, since the Haar wavelet functions are not even continuous. 

\paragraph{}
The critical value of the test is obtained from the permutation approach, inspired by Hoeffding \cite{Hoeffding1952}, and Romano \cite{Romano1989}. 
Let $\rperm$ be a uniformly distributed random permutation of $\ac{1,\dots,n}$ independent of $\X_n$ and consider the permuted sample 
$$\X_n^{\rperm} = (X_1^{\rperm}, \dots X_n^{\rperm}), \quad \mbox{where}\quad X_i^{\rperm}=(X_i^1,X_{\rperm(i)}^2) \ \forall 1\leq i\leq n,$$ 
obtained from permuting only the second coordinates.
Then, under $\hyp_0$, the original sample $\X_n$ and the permuted one $\X_n^{\rperm}$ have the same distribution. 
Hence, the critical value of the upper-tailed test, denoted by $q_{1-\alpha}(\X_n)$, is the $(1-\alpha)$-quantile of the conditional distribution of the permuted statistic $T_\delta(\X_n^{\rperm})$ given the sample $\X_n$, 
where the permuted test statistic is equal to
\begin{equation*}
T_\delta(\X_n^{\rperm}) = \frac{1}{n-1} \pa{\sum_{i=1}^n \varphi_\delta (X_i^1,X_{\rperm(i)}^2) -\frac{1}{n} \sum_{i,j=1}^n \varphi_\delta (X_i^1,X_j^2)}, 
\end{equation*}
More precisely, given $\X_n$, if 
$$T_\delta^{(1)}(\X_n)\leq T_\delta^{(2)}(\X_n)\leq \dots \leq T_\delta^{(n!)}(\X_n)$$
denote the ordered values of all the permuted test statistic $T_\delta(\X_n^{\perm})$, when $\perm$ describes the set of all permutations of $\ac{1,\dots,n}$, then the critical value is equal to 
\begin{equation}
\label{eq:def_crit_valq}
q_{1-\alpha}(\X_n) = T_\delta^{(n!-\lfloor n!\alpha \rfloor)}(\X_n). 
\end{equation}
The corresponding test rejects the null hypothesis when $T_\delta(\X_n) > q_{1-\alpha}(\X_n)$, here denoted by 
\begin{equation}
\label{eq:def_test_chap_conc}
\Delta_\alpha(\X_n) = \1{T_\delta(\X_n) > q_{1-\alpha}(\X_n)}.
\end{equation}

\paragraph{}
In \cite{AlbertBouretFromontReynaud2015}, the asymptotic properties of such test are studied. Based on a combinatorial central limit theorem in a non-i.i.d. case, the test is proved to be, under mild conditions, asymptotically of prescribed size, and power equal to one under any reasonable alternatives. 
Yet, as explained above, such purely asymptotic properties may be insufficient when applying these tests in neuroscience for instance. Moreover, the delicate choice of the parameter $\delta$ is a real question, especially, in neuroscience, where it has some biological meaning, as mentioned in \cite{AlbertBouretFromontReynaud2015} and \cite{AlbertBouretFromontReynaud2016}. 
A possible approach to overcome this issue is to aggregate several tests for different parameters $\delta$, and reject independence if at least one of them does. In particular, this approach should give us information on how to choose this parameter. Yet, to do so, non-asymptotic controls are necessary. 

\paragraph{}
From a non-asymptotic point of view, since the test is non-asymptotically of prescribed level by construction, remains the non-asymptotic control of the second kind error rate, that is the probability of wrongly accepting the null hypothesis. 
In the spirit of \cite{FromontLaurentReynaud2011,FromontLaurentReynaud2013,SansonnetTuleau2015}, the idea is to study the uniform separation rates, in order to study the optimality in the minimax sense (see \cite{Baraud2002}). 

From now on, consider an alternative $P$ satisfying $(\hyp_1)$, and an i.i.d. sample $\X_n$ from such distribution $P$. 
Assume moreover that the alternative satisfies $\esps{P}{\varphi_\delta}>\esps{\indep}{\varphi_\delta}$, that is $\esp{T_\delta(\X_n)} > 0$. 
The initial step is to find some condition on $P$ guaranteeing the control of the second kind error rate, namely $\proba{\Delta_\alpha(\X_n) = 0}$, by a prescribed value $\beta>0$. 
Intuitively, since the expectation of the test statistic $\esp{T_\delta(\X_n)}$ is equal to zero under the null hypothesis, the test should be more efficient in rejecting $(\hyp_0)$ for large values of this expectation. 
So, the aim is to find conditions of the form $\esp{T_\delta(\X_n)}\geq s$ for some threshold $s$ to be determined. 
Yet, one of the main difficulties here comes from the randomness of the critical value. 
The idea, as in \cite{FromontLaurentReynaud2011}, is thus to introduce $q^\alpha_{1-\beta/2}$ the $(1-\beta/2)$-quantile of the critical value $q_{1-\alpha}(\X_n)$ and deduce from Chebychev's inequality (see Appendix \ref{sct:cond_Chebychev}), that the second kind error rate is controlled by $\beta$ as soon as 
\begin{equation}
\label{eq:cond_simple_tchebychev}
\esp{T_\delta(\X_n)} \geq q^\alpha_{1-\beta/2} + \sqrt{\frac{2}{\beta} \var{T_\delta(\X_n)}}.
\end{equation}
Usually, the goal in general minimax approaches is to express, for well-chosen functions $\varphi_\delta$, some distance between the alternative $P$ and the null hypothesis $(\hyp_0)$ thanks to $\esp{T_\delta(\X_n)}$ for which minimax lower-bounds are known (see for instance \cite{FromontLaurentReynaud2011,FromontLaurentReynaud2013}). 
The objective is then to control, up to a constant, such distance (and in particular each term in the right-hand side of \eqref{eq:cond_simple_tchebychev}) by the minimax rate of independence testing with respect to such distance on well-chosen regularity subspaces of alternatives, in order to prove the optimality of the method from a theoretical point of view. 
The interested reader could refer to the thesis \cite[Chapter 4]{Albert2015PhD} for more details about this kind of development in the density case.  
It is not in the scope of the present article to develop such minimax theory in the general case, but to provide some general tools providing some sharp control of each term in the right-hand side of \eqref{eq:cond_simple_tchebychev} which consists in a very first step of this approach. 
Some technical computations imply that the variance term can be upper bounded, up to a multiplicative constant, by $n^{-1} (\esp{\varphi_\delta^2(X_1^1,X_1^2)} + \esp{\varphi_\delta^2(X_1^1,X_2^2)})$ (see Lemma~\ref{lm:calcul_variance}). 
Hence, the challenging part relies in the quantile term. At this point, several ideas have been explored. 

\subsection{Why concentration inequalities are necessary}
\label{sct:motivconc}

A first idea to control the conditional quantile of the permuted test statistic is based on the non-asymptotic control of the critical value obtained in Appendix \ref{sct:control_quantile_cond} (see equation \eqref{eq:maj_quantile_Hoeffding}), following Hoeffding's idea (see \cite[Theorem 2.1]{Hoeffding1952}), that leads to the condition
\begin{equation}
\label{eq:cond_Hoeffding}
\esp{T_\delta(\X_n)} \geq \frac{4}{\sqrt{\alpha}}\sqrt{\frac{2}{\beta}\frac{\esp{\varphi_\delta(X_1^1,X_1^2)^2} + \esp{\varphi_\delta(X_1^1,X_2^2)^2}}{n}}.
\end{equation}
The proof of this result is detailed in Appendix \ref{sct:cond_Hoeffding52}. 
Yet, this result may not be sharp enough, especially in $\alpha$. Indeed, as explained above, the next step consists in aggregating several tests for different values of the parameter $\delta$ in a purpose of adaptivity. 
Generally, when aggregating tests, as in multiple testing methods, the multiplicity of the tests has to be taken into account. In particular, the single prescribed level of each individual test should be corrected. 
Several corrections exist, such as the Bonferroni one, which consists in dividing the global desired level $\alpha$ by the number of tests $M$. Yet, for such correction, the lower-bound in \eqref{eq:cond_Hoeffding} comes with a cost in $\sqrt{M}$, which is too large to provide optimal rates. Even with more sophisticated corrections than the Bonferroni one (see, e.g., \cite{FromontLaurentReynaud2011,FromontLaurentReynaud2013,SansonnetTuleau2015}), 
the control by a term of order $\alpha^{-1/2}$ is too large, since classically in the literature, the dependence in $\alpha$ should be of the order of $\sqrt{\ln(1/\alpha)}$. 
Hence, the bound ensuing from this first track being not sharp enough, the next idea was to investigate other non-asymptotic approaches for permuted sums.

\paragraph{}
Such approaches have also been studied in the literature. 
For instance, Ho and Chen \cite{HoChen1978} obtain non-asymptotic Berry-Esseen type bounds in the $\L^p$-distance between the cumulative distribution function (c.d.f.) of the standardized permuted sum of i.i.d. random variables and the c.d.f. of the normal distribution, based on Stein's method. In particular, they obtain the rate of convergence to a normal distribution in $\L^p$-distance under Lindeberg-type conditions. Then, Bolthausen \cite{Bolthausen1984} considers a different approach, also based on Stein's method allowing to extend Ho and Chen's results in the non-identically distributed case. More precisely, he obtains bounds in the $\L^\infty$-distance in the non-random case. 
In particular, in the deterministic case (which can easily be generalized to random cases), considering the notation introduced above, he obtains the following non-asymptotic bound:
\begin{equation*}
\sup_{x\in\R} \abs{\proba{Z_n -\esp{Z_n} \leq x\sqrt{\var{Z_n}}} - \Phi_{0,1}(x)} \leq \frac{C}{n\sqrt{\var{Z_n}}^3}\sum_{i,j=1}^n\abs{d_{i,j}}^3,
\end{equation*}
where $C$ is an absolute constant, and $\Phi_{0,1}$ denotes the standard normal distribution function. 
In particular, when applying this result to answer our motivation by considering random variables $\varphi_\delta(X_i^1,X_j^2)$ instead of the deterministic terms $a_{i,j}$, and working conditionally on the sample $\X_n$, the permuted statistic $T_\delta(\X_n^{\rperm})$ corresponds to $(n-1)^{-1}(Z_n-\esp{Z_n})$. 
Therefore, the previous inequality implies that, for all $t$ in $\R$,  
\begin{align}
\label{eq:cond_appl_Bolthausen}
\proba{T_\delta\!\pa{\X_n^{\rperm}} > t \middle| \X_n} \leq &\cro{1 - \Phi_{0,1}\pa{\frac{t}{\sqrt{\var{T_\delta\!\pa{\X_n^{\rperm}} \middle| \X_n}}}}} \nonumber\\
& +\quad \frac{C}{n(n-1)^{2/3}\sqrt{\var{T_\delta\!\pa{\X_n^{\rperm}} \middle| \X_n}}^3}\sum_{i,j}\abs{D_{i,j}}^3,
\end{align}
where $D_{i,j}$ denotes 
$$\varphi_\delta(X_i^1,X_j^2) -\frac{1}{n}\sum_{l=1}^n \varphi_\delta(X_i^1,X_l^2) -\frac{1}{n}\sum_{k=1}^n \varphi_\delta(X_k^1,X_j^2) + \frac{1}{n^2}\sum_{k,l=1}^n \varphi_\delta(X_k^1,X_l^2).$$
Yet, by definition of conditional quantiles, the critical value $q_{1-\alpha}(\X_n)$ is the smallest value of $t$ such that $\proba{T_\delta(\X_n) > t \middle| \X_n} \leq \alpha$.
Hence, considering \eqref{eq:cond_appl_Bolthausen}, one can easily make the first term of the sum in the right-hand side of the inequality as small as one wants by choosing $t$ large enough. 
However, the second term being fixed, nothing guarantees that the upper-bound in \eqref{eq:cond_appl_Bolthausen} can be constrained to be smaller than $\alpha$. Thus, this result cannot be applied in order to control non-asymptotically the critical value. 
Concentration inequalities seem thus to be adequate here, as they provide sharp non-asymptotic results, with usually exponentially small controls which leads to the desired logarithmic dependency in $\alpha$, as mentioned above.

\subsection{A sharp control of the conditional quantile and a new condition guaranteeing a control of the second kind error rate}
\label{sct:quantcontr}

Sharp controls of the quantiles are provided in the following proposition. 
\begin{prop}
\label{prop:control_quantile}
Consider the same notation as in Section \ref{sct:motivstat} and let $q_{1-\beta/2}^{\alpha}$ be the $(1-\beta/2)$-quantile of the conditional quantile $q_{1-\alpha}(\X_n)$.
Then, there exists two universal positive constants $C'$ and $c_0$ such that
\begin{equation}
\label{eq:control_conditional_quantile}
q_{1-\alpha}(\X_n) \leq \frac{C'}{n-1} \ac{\sqrt{\frac{1}{n} \sum_{i,j=1}^n\varphi_\delta^2(X_i^1,X_j^2)} \sqrt{\ln\pa{\frac{c_0}{\alpha}}} + \norm{\varphi_\delta}_{\infty}\ln\pa{\frac{c_0}{\alpha}}}. 
\end{equation}

As a consequence, there exists a universal positive constants $C$ such that
\begin{equation}
\label{eq:control_quantile}
q_{1-\beta/2}^\alpha \leq C \ac{\sqrt{\frac{2}{\beta}\ln\pa{\frac{c_0}{\alpha}}}\pa{\frac{\sqrt{\esps{P}{\varphi_\delta^2}}}{n} + \frac{\sqrt{\esps{\indep}{\varphi_\delta^2}}}{\sqrt{n}}}  + \frac{\norm{\varphi_\delta}_\infty}{n}\ln\pa{\frac{c_0}{\alpha}}}.
\end{equation}
\end{prop}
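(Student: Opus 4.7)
The plan is to apply the Bernstein-type inequality of Theorem \ref{thm:concQcqMoy} conditionally on the sample $\X_n$, and then handle the resulting random upper bound by a simple Markov step. First, I would fix $\X_n$ and observe that, with $a_{i,j}=\varphi_\delta(X_i^1,X_j^2)$, the permuted statistic satisfies
$$T_\delta(\X_n^{\rperm}) = \frac{1}{n-1}\pa{Z_n - \esp{Z_n\mid\X_n}}, \quad Z_n=\sum_{i=1}^n a_{i,\rperm(i)},$$
because $\esp{a_{i,\rperm(i)}\mid\X_n}=\frac{1}{n}\sum_j a_{i,j}$. Since the $a_{i,j}$ are conditionally deterministic with $|a_{i,j}|\leq\norm{\varphi_\delta}_\infty$, Theorem \ref{thm:concQcqMoy} applies conditionally and gives, for every $x>0$,
$$\proba{T_\delta(\X_n^{\rperm})\geq \frac{1}{n-1}\pa{2\sqrt{2\pa{\tfrac{1}{n}\textstyle\sum_{i,j} a_{i,j}^2}x}+2\norm{\varphi_\delta}_\infty x}\,\middle\vert\,\X_n}\leq 16e^{1/16}e^{-x/16}.$$
Choosing $x=16\ln(c_0/\alpha)$ with $c_0 = 16e^{1/16}$ makes the right-hand side equal to $\alpha$, so by definition of the conditional $(1-\alpha)$-quantile this gives \eqref{eq:control_conditional_quantile} with a universal $C'$.

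For \eqref{eq:control_quantile}, I would then bound the data-dependent term $\frac{1}{n}\sum_{i,j}\varphi_\delta^2(X_i^1,X_j^2)$ in probability. Splitting the sum into diagonal and off-diagonal contributions and using that the $X_i$ are i.i.d. with law $P$,
$$\esp{\frac{1}{n}\sum_{i,j=1}^n\varphi_\delta^2(X_i^1,X_j^2)}=\esps{P}{\varphi_\delta^2}+(n-1)\esps{\indep}{\varphi_\delta^2}.$$
Markov's inequality then guarantees, with probability at least $1-\beta/2$,
$$\frac{1}{n}\sum_{i,j}\varphi_\delta^2(X_i^1,X_j^2)\leq\frac{2}{\beta}\bigl(\esps{P}{\varphi_\delta^2}+(n-1)\esps{\indep}{\varphi_\delta^2}\bigr),$$
and taking square roots via $\sqrt{A+B}\leq\sqrt{A}+\sqrt{B}$ splits the bound into a $P$-part and an $\indep$-part.

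Combining this with \eqref{eq:control_conditional_quantile} yields a deterministic upper bound for $q_{1-\alpha}(\X_n)$ valid on an event of probability at least $1-\beta/2$, and dividing by $n-1$ produces exactly the shape of \eqref{eq:control_quantile} (absorbing $1/(n-1)\asymp 1/n$ and $1/\sqrt{n-1}\asymp 1/\sqrt{n}$ into a universal $C$). Since $q_{1-\beta/2}^\alpha$ is, by definition, the smallest deterministic value dominating $q_{1-\alpha}(\X_n)$ with probability at least $1-\beta/2$, this concludes the proof.

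The computations are essentially routine; the only real points requiring care are the verification that Theorem \ref{thm:concQcqMoy} may be applied conditionally (immediate, because the $a_{i,j}$ are functions of $\X_n$ and $\rperm$ is independent of $\X_n$), and the bookkeeping of constants through the choice $x=16\ln(c_0/\alpha)$. The structure of the proof — concentration conditionally, then Markov unconditionally — is exactly what allows the $\sqrt{\ln(1/\alpha)}$ dependence that was missing from the Hoeffding-type bound \eqref{eq:cond_Hoeffding}.
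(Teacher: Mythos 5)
Your proposal is correct and follows essentially the same two-step route as the paper's proof: apply Theorem~\ref{thm:concQcqMoy} conditionally on $\X_n$ with $a_{i,j}=\varphi_\delta(X_i^1,X_j^2)$ and solve for the quantile by setting $x$ proportional to $\ln(c_0/\alpha)$, then control the $(1-\beta/2)$-quantile of the random term $\frac{1}{n}\sum_{i,j}\varphi_\delta^2(X_i^1,X_j^2)$ via Markov's inequality and the subadditivity of $\sqrt{\cdot}$. The explicit choice $c_0=16e^{1/16}$ and $x=16\ln(c_0/\alpha)$ matches the paper's $c_1^{-1}\ln(c_0/\alpha)$ with $c_1=1/16$, so the constants agree as well.
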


Moreover, a control of the variance term is obtained in the following lemma based on the Cauchy-Schwartz inequality. 
\begin{lm}
\label{lm:calcul_variance}
Let $n\geq 4$ and $\X_n$ be a sample of $n$ i.i.d. random variables with distribution $P$ and marginals $P^1$ and $P^2$. Let $T_\delta$ be the test statistic defined in \eqref{eq:def_stat_T}, and $\esps{P}{\cdot}$ and $\esps{\indep}{\cdot}$ be notation introduced in \eqref{eq:not_esps_general_indep}. Then, if both $\esps{P}{\varphi_\delta^2}<+\infty$ and $\esps{\indep}{\varphi_\delta^2}<+\infty$, 
$$\var{T_\delta(\X_n)} \leq \frac{1}{n} \pa{\sqrt{\esps{P}{\varphi_\delta^2}} + 2 \sqrt{\esps{\indep}{\varphi_\delta^2}}}^2.$$
\end{lm}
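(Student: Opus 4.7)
The approach is to decompose the statistic into its diagonal and off-diagonal parts, control each by standard $L^2$ techniques, and combine via the Minkowski/triangle inequality in $L^2$ (a form of Cauchy--Schwarz). After splitting the $i=j$ contribution from the double sum in~\eqref{eq:def_stat_T}, one obtains $T_\delta(\X_n) = A - B$ with
$$A := \frac{1}{n}\sum_{i=1}^n \varphi_\delta(X_i^1,X_i^2), \qquad B := \frac{1}{n(n-1)}\sum_{i\neq j}\varphi_\delta(X_i^1,X_j^2),$$
so that $\sqrt{\var{T_\delta(\X_n)}} \leq \sqrt{\var A} + \sqrt{\var B}$. Since $A$ is an empirical mean of $n$ i.i.d. copies of $\varphi_\delta(X_1^1,X_1^2)$, one has directly $\var A \leq n^{-1}\esps{P}{\varphi_\delta^2}$, which already accounts for the first summand of the announced bound.

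The main work is to control $\sqrt{\var B}$ by $2\sqrt{\esps{\indep}{\varphi_\delta^2}/n}$. I would recognize $B$ as a $U$-statistic of order two after symmetrizing the kernel: setting $\tilde\psi(X,Y) := \tfrac{1}{2}\bigl(\varphi_\delta(X^1,Y^2) + \varphi_\delta(Y^1,X^2)\bigr)$ yields $B = \binom{n}{2}^{-1}\sum_{i<j}\tilde\psi(X_i,X_j)$, and the classical Hoeffding variance identity for $U$-statistics gives
$$\var B = \frac{4(n-2)}{n(n-1)}\zeta_1 + \frac{2}{n(n-1)}\zeta_2,$$
with $\zeta_1 = \var{\esp{\tilde\psi(X_1,X_2)\mid X_1}}$ and $\zeta_2 = \var{\tilde\psi(X_1,X_2)}$. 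Both $\zeta_1$ and $\zeta_2$ can then be bounded by $\esps{\indep}{\varphi_\delta^2}$ through Cauchy--Schwarz: for $\zeta_2$, applying $(a+b)^2\leq 2(a^2+b^2)$ inside the definition of $\tilde\psi$ and using that both $(X^1,Y^2)$ and $(Y^1,X^2)$ have marginal distribution $P^1\otimes P^2$ gives $\esp{\tilde\psi(X_1,X_2)^2}\leq\esps{\indep}{\varphi_\delta^2}$; for $\zeta_1$, noting that $\esp{\tilde\psi(X,Y)\mid X} = \tfrac{1}{2}(f(X^1)+g(X^2))$ with $f(x^1)=\esps{P^2}{\varphi_\delta(x^1,\cdot)}$ and $g(x^2)=\esps{P^1}{\varphi_\delta(\cdot,x^2)}$, the same quadratic inequality combined with Jensen's inequality (which gives $\esp{f(X_1^1)^2}\leq\esps{\indep}{\varphi_\delta^2}$ and likewise for $g$) yields $\zeta_1\leq \esps{\indep}{\varphi_\delta^2}$.

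An elementary check that $\tfrac{4(n-2)}{n(n-1)}+\tfrac{2}{n(n-1)} = \tfrac{4n-6}{n(n-1)}\leq \tfrac{4}{n}$ for every $n\geq 2$ (a fortiori for $n\geq 4$) then produces $\var B \leq \tfrac{4}{n}\esps{\indep}{\varphi_\delta^2}$, hence $\sqrt{\var B}\leq 2\sqrt{\esps{\indep}{\varphi_\delta^2}/n}$. Plugging this and the bound on $\sqrt{\var A}$ into the triangle inequality and squaring concludes. The delicate point, and essentially the only nontrivial one, is preserving the sharp constant $2$ in front of $\sqrt{\esps{\indep}{\varphi_\delta^2}}$: this is precisely what forces the symmetrization together with the exact $U$-statistic variance identity, rather than a cruder case-by-case expansion of $\esp{B^2}$ over the four index-overlap patterns, which would lose this constant.
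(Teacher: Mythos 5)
Your proof is correct and reaches the stated bound, but it packages the argument somewhat differently from the paper. Both proofs rest on the same split of $T_\delta$ into its diagonal part $A = \frac{1}{n}\sum_i\varphi_\delta(X_i^1,X_i^2)$ and its off-diagonal part $B = \frac{1}{n(n-1)}\sum_{i\neq j}\varphi_\delta(X_i^1,X_j^2)$. The paper expands $\var{T_\delta(\X_n)} = A_n - 2B_n + C_n$ directly (where $A_n = \var{A}$, $B_n = \operatorname{Cov}(A,B)$, $C_n = \var{B}$), bounds the cross-covariance $B_n$ by Cauchy--Schwarz, and controls $C_n$ by a hands-on classification of the index-overlap patterns. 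You instead apply the $L^2$ triangle inequality $\sqrt{\var{A-B}}\leq\sqrt{\var A}+\sqrt{\var B}$ and then invoke the Hoeffding variance identity for U-statistics after symmetrizing the kernel. These are genuinely different-looking derivations, though they are combinatorially the same: the U-statistic identity \emph{is} the systematized count of overlap patterns. Both routes produce exactly the same coefficients $\frac{4(n-2)}{n(n-1)}$ and $\frac{2}{n(n-1)}$ (compare the paper's study of $C_n$), and the crude bound $\frac{4n-6}{n(n-1)}\leq\frac{4}{n}$ is common to both.

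One inaccuracy worth flagging in your discussion: you claim that a "cruder case-by-case expansion of $\esp{B^2}$ over the four index-overlap patterns... would lose this constant." This is not so --- the paper does precisely that case analysis for $C_n$ and recovers the identical constant $4/n$, and a direct expansion of $\var B$ over the four one-index-overlap configurations plus the two two-index-overlap configurations likewise yields $\frac{4n-6}{n(n-1)}\esps{\indep}{\varphi_\delta^2}$. Symmetrization plus the U-statistic formula is an elegant way to organize the bookkeeping, but it is not what preserves the constant; the constant is determined by the overlap counts, however one chooses to enumerate them.
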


Proposition~\ref{prop:control_quantile} and Lemma \ref{lm:calcul_variance} both imply that the right-hand side of \eqref{eq:cond_simple_tchebychev} is upper bounded by 
\begin{equation}
\label{eq:upper_bound}
C''\ac{\sqrt{\frac{2}{\beta}\cro{\ln\pa{\frac{c_0}{\alpha}}+1}\frac{\pa{\esps{P}{\varphi_\delta^2} + \esps{\indep}{\varphi_\delta^2}}}{n}} + \frac{\norm{\varphi_\delta}_\infty}{n}\ln\pa{\frac{c_0}{\alpha}}},
\end{equation}
where $C''$ is a universal constant. 

Indeed, the control of $q_{1-\beta/2}^\alpha$ is implied by \eqref{eq:control_quantile} combined with the concavity property of the square-root function. 
Lemma \ref{lm:calcul_variance} directly implies that the variance term satisfies 
$$
\var{T_\delta(\X_n)} \leq \frac{8}{n} \pa{\esps{P}{\varphi_\delta^2} + \esps{\indep}{\varphi_\delta^2}},
$$

Finally, if $\esp{T_\delta(\X_n)}$ is larger than the quantity in \eqref{eq:upper_bound}, then condition \eqref{eq:cond_simple_tchebychev} is satisfied which directly provides that $\proba{\Delta_{\delta,\alpha}(\X_n) = 0} \leq \beta$, that is the second kind error rate of the test $\Delta_{\delta,\alpha}$ is less than or equal to the prescribed value $\beta$. 
One may notice that this time, the dependence in $\alpha$ is, as expected, of the order of $\sqrt{\ln(1/\alpha)}$.

\section{Proofs}
\label{sct:proofs}

\subsection{Proof of Lemma \ref{lm:concRac}}
\label{sct:lmConcRac}

\paragraph{Sketch of proof.} From now on, fix $t>0$.
Recall the notation introduced by Talagrand in Theorem \ref{thm:Talagrand}. The main purpose of these notation is to introduce some notion of distance between a permutation $\perm$ in $\Sn{n}$ and a subset $A$ of $\Sn{n}$. To do so, the idea is to reduce the set of interest to a simpler one, that is $[0,1]^n$, by considering $$U_A(\perm) = \ac{s \in \{0,1\}^n \ ;\ \exists \tau \in A \mbox{ such that } \forall 1\leq i\leq n,\ s_i=0 \implies \tau(i)=\perm(i)}.$$
One may notice that the permutation $\perm$ belongs to $A$ if and only if $0$ belongs to the set $U_A(\perm)$. 
Hence, the corresponding distance between the permutation $\perm$ and the set $A$ is coded by the distance between $0$ and the set $U_A(\perm)$ and thus defined by
$$f(A,\perm) = \min\ac{\sum_{i=1}^n v_i^2\ ;\ v=\pa{v_i}_{1\leq i\leq n}\in V_A(\perm)},$$
where $V_A(\perm)=\operatorname{ConvexHull}\pa{U_A(\perm)}$. 
One may notice in particular that $A$ contains $\perm$ if and only if the distance $f(A,\perm)=0$. 

The global frame of the proof of Lemma \ref{lm:concRac} (and also Proposition \ref{prop:concPosMed}) relies on the following steps. 
The first step consists in proving that
\begin{equation}
\label{eq:1step}
\proba{\sqrt{Z}\geq\sqrt{C_A} + t \sqrt{\max_{1\leq i,j\leq n}\ac{a_{i,j}}}} \leq \frac{e^{-t^2/16}}{\proba{Z\in A}},
\end{equation}
for some subset $A$ of $\Sn{n}$ of the shape $A=\ac{\tau\in\Sn{n} ; Z(\tau)\leq C_A}$ for some constant $C_A$ to be chosen later. 
For this purpose, since Talagrand's inequality for random permutations (see Theorem \ref{thm:Talagrand}) provides that 
$$\proba{f(A,\rperm) \geq t^2} \leq \frac{e^{-t^2/16}}{\proba{\rperm \in A}},$$
it is sufficient to prove that 
$$\proba{f(A,\rperm) \geq t^2}\geq \proba{\sqrt{Z}\geq\sqrt{C_A} + t \sqrt{\max_{1\leq i,j\leq n}\ac{a_{i,j}}}},$$ to obtain \eqref{eq:1step}. 
To do so, the idea, as in \cite{AdamczakChafaiWolff2014}, is to show that the assertion $f(A,\rperm) < t^2$ implies that $\sqrt{Z}< \sqrt{C_A} + t \sqrt{\max_{1\leq i,j\leq n}\ac{a_{i,j}}}$, and to conclude by contraposition. 

Then, the two following steps consist in choosing appropriate constants $C_A$ in \eqref{eq:1step} depending on the median of $Z$, such that both $\proba{\sqrt{Z}\geq\sqrt{C_A} + t \sqrt{\max_{1\leq i,j\leq n}\ac{a_{i,j}}}}$ and $\proba{Z\in A}$ are greater than $1/2$, in order to control both probabilities 
$$\proba{\sqrt{Z} \geq \sqrt{\med{Z}} + t\sqrt{\max_{1\leq i,j\leq n}\ac{a_{i,j}}}}\mbox{ and }\proba{\sqrt{Z} \leq \sqrt{\med{Z}} - t\sqrt{\max_{1\leq i,j\leq n}\ac{a_{i,j}}}}$$ respectively in \eqref{eq:concRac+} and \eqref{eq:concRac-}.

\paragraph{First step: preliminary study.}
Assume $f(A,\rperm)<t^2$. Then, by definition of the distance $f$, there exists some $s^1,\dots,s^m$ in $U_A(\rperm)$, and some non-negative weights $p_1,\dots,p_m $ satisfying $\sum_{j=1}^m p_j=1$ such that 
$$\sum_{i=1}^n \cro{\pa{\sum_{j=1}^m p_j s_i^j}^2} < t^2.$$
For each $1\leq j\leq m$, since $s^j$ belongs to $U_A(\rperm)$, one may consider a permutation $\tau_j$ in $A$ associated to $s^j$ (that is satisfying $s_i^j=0 \implies \tau_j(i)=\rperm(i)$). 
Then, since the $a_{i,j}$ are non-negative, and from the Cauchy-Schwartz inequality,
\begin{eqnarray*}
Z-\sum_{j=1}^m p_j Z(\tau_j) &=& \sum_{i=1}^n \sum_{j=1}^m p_j \pa{a_{i,\rperm(i)}-a_{i,\tau_j(i)}}\\
&=& \sum_{i=1}^n \sum_{j=1}^m p_j \pa{a_{i,\rperm(i)}-a_{i,\tau_j(i)}}s_i^j\\ 
&\leq & \sum_{i=1}^n \cro{\pa{\sum_{j=1}^m p_j s_i^j} a_{i,\rperm(i)}} \\
&\leq & \sqrt{\sum_{i=1}^n \pa{\sum_{j=1}^m p_j s_i^j}^2} \sqrt{\sum_{i=1}^n a_{i,\rperm(i)}^2}\\
&< & t \sqrt{\max_{1\leq i,j\leq n}\ac{a_{i,j}}} \sqrt{Z}.
\end{eqnarray*}
Thus, as the $\tau_j$ are in $A=\ac{\tau ; Z(\tau)\leq C_A}$, 
$$ Z < C_A + t \sqrt{\max_{1\leq i,j\leq n}\ac{a_{i,j}}} \sqrt{Z}.$$

Therefore, by solving the second-order polynomial in $\sqrt{Z}$ above, one obtains
$$ \sqrt{Z} < \frac{t \sqrt{\max_{1\leq i,j\leq n}\ac{a_{i,j}}} + \sqrt{t^2\max_{1\leq i,j\leq n}\ac{a_{i,j}}+4C_A}}{2}\leq t \sqrt{\max_{1\leq i,j\leq n}\ac{a_{i,j}}} + \sqrt{C_A}. $$

Finally, by contraposition, 
$$\proba{\sqrt{Z}\geq\sqrt{C_A} + t \sqrt{\max_{1\leq i,j\leq n}\ac{a_{i,j}}}} \leq \proba{f(A,\rperm)\geq t^2},$$ 
which, combined with \eqref{eq:Talagrand-Markov} of Theorem \ref{thm:Talagrand} provides \eqref{eq:1step}.

\paragraph{Second step: proof of \eqref{eq:concRac+}.}
Taking $C_A=\med{Z}$ guarantees $\proba{Z\in A}\geq 1/2$ and thus, \eqref{eq:1step} provides \eqref{eq:concRac+}.

\paragraph{Third step: proof of \eqref{eq:concRac-}.}
Taking $C_A = \pa{\sqrt{\med{Z}} - t\sqrt{\max_{1\leq i,j\leq n}\ac{a_{i,j}}}}^2$ implies 
$$\proba{\sqrt{Z}\geq \sqrt{C_A} + t \sqrt{\max_{1\leq i,j\leq n}\ac{a_{i,j}}}} = \proba{\sqrt{Z}\geq\sqrt{\med{Z}}}=\proba{Z\geq\med{Z}} \geq \frac{1}{2}.$$

So finally, again by \eqref{eq:1step},
\begin{eqnarray*}
\proba{\sqrt{Z} \leq \sqrt{\med{Z}} - t\sqrt{\max_{1\leq i,j\leq n}\ac{a_{i,j}}}} 
& = & \proba{Z\in A} \\
&\leq & \frac{e^{-t^2/16}}{\proba{\sqrt{Z}\geq \sqrt{C_A} + t \sqrt{\max_{1\leq i,j\leq n}\ac{a_{i,j}}}}}\\
&\leq & 2 e^{-t^2/16},
\end{eqnarray*}
which ends the proof of the Lemma.

\subsection{Proof of Proposition \ref{prop:concPosMed}}
\label{sct:propConcPosMed}

From now on, fix $x>0$, and consider $t=x^2$.
This proof is again based on Talagrand's inequality for random permutations, combined with \eqref{eq:concRac+} in Lemma \ref{lm:concRac}. It follows exactly the same progression as in the proof of Lemma \ref{lm:concRac}; 
the preliminary step consists in working with subsets $A\subset \Sn{n}$ of the form $A=\ac{\tau\in\Sn{n}\ ;\ Z(\tau)\leq C_A}$ for some constant $C_A$, in order to obtain for all $v>0$,
\begin{equation}
\label{eq:1stepPos}
\proba{Z \geq C_A + t \pa{\sqrt{\med{\sum_{i=1}^n a_{i,\rperm(i)}^2}} + v\max_{1\leq i,j \leq n}\ac{a_{i,j}}}}\leq \frac{e^{-t^2/16}}{\proba{Z\in A}} + 2 e^{-v^2/16}.
\end{equation}
The second and third step consist in picking up a well-chosen constant $C_A$ and a well-chosen $v>0$ in order to obtain respectively
\begin{equation}
\label{eq:concPosMed+}
\proba{Z \geq \med{Z} + t\pa{\sqrt{\med{\sum_{i=1}^n a_{i,\rperm(i)}^2}} + (t\vee C_0) \max_{1\leq i,j\leq n}\ac{a_{i,j}}}}\leq 4 e^{-t^2/16}, 
\end{equation}
and 
\begin{equation}
\label{eq:concPosMed-}
\proba{Z \leq \med{Z} - t\pa{\sqrt{\med{\sum_{i=1}^n a_{i,\rperm(i)}^2}} + (t\vee C_0) \max_{1\leq i,j\leq n}\ac{a_{i,j}}}}\leq 4 e^{-t^2/16}, 
\end{equation}
where $C_0 = 4\sqrt{\ln(8)}$.
The final step combines \eqref{eq:concPosMed+} and \eqref{eq:concPosMed-} in order to prove \eqref{eq:concPosMed}.

\paragraph{First step: preliminary study.}
Let $A=\ac{\tau\in\Sn{n}\ ;\ Z(\tau)\leq C_A}$ with $C_A$ a general constant, and fix $v>0$. 
Assume, this time, that both
\begin{equation}
\label{eq:hypPos}
f(A,\rperm)<t^2 \quad\mbox{ and }\quad
\sqrt{\sum_{i=1}^n a_{i,\rperm(i)}^2} < \sqrt{\med{\sum_{i=1}^n a_{i,\rperm(i)}^2}} + v\max_{1\leq i,j \leq n}\ac{a_{i,j}}.
\end{equation}
Then, as in the preliminary study of the proof of Lemma \ref{lm:concRac}, from the first assumption in \eqref{eq:hypPos}, there exists some $s^1,\dots,s^m$ in $U_A(\rperm)$, and some non-negative weights $p_1,\dots,p_m$ satisfying $\sum_{j=1}^m p_j=1$ such that
$$\sum_{i=1}^n \cro{\pa{\sum_{j=1}^m p_j s_i^j}^2} < t^2.$$
For each $1\leq j\leq m$, consider $\tau_j$ in $A$ associated to $s^j$, that is a permutation $\tau_j$ in $A$ satisfying $s_i^j=0 \implies \tau_j(i)=\rperm(i)$. 
Then, combining the Cauchy-Shwartz inequality with the second assumption in \eqref{eq:hypPos} leads to
\begin{eqnarray*}
Z-\sum_{j=1}^m p_j Z(\tau_j) &=& \sum_{i=1}^n \sum_{j=1}^m p_j \pa{a_{i,\rperm(i)}-a_{i,\tau_j(i)}}s_i^j\\ 
&\leq & \sum_{i=1}^n \cro{\pa{\sum_{j=1}^m p_j s_i^j} a_{i,\rperm(i)}} \\
&\leq & \sqrt{\sum_{i=1}^n \pa{\sum_{j=1}^m p_j s_i^j}^2} \sqrt{\sum_{i=1}^n a_{i,\rperm(i)}^2}\\
&< & t \pa{\sqrt{\med{\sum_{i=1}^n a_{i,\rperm(i)}^2}} + v\max_{1\leq i,j \leq n}\ac{a_{i,j}}}.
\end{eqnarray*}
Notice that here, the reasoning begins exactly as in the proof of Lemma \ref{lm:concRac}. Yet, the second assumption in \eqref{eq:hypPos}, which can be controlled thanks to that lemma, allows us to sharpen the inequality.
Thus, as the $\tau_j$ are in $A=\ac{\tau ; Z(\tau)\leq C_A}$, 
\begin{equation}
\label{eq:csqPos} 
Z < C_A + t \pa{\sqrt{\med{\sum_{i=1}^n a_{i,\rperm(i)}^2}} + v\max_{1\leq i,j \leq n}\ac{a_{i,j}}}. 
\end{equation}
Hence, by contraposition of \eqref{eq:hypPos} $\implies$ \eqref{eq:csqPos},  one obtains
\begin{multline*}
\proba{Z \geq C_A + t \pa{\sqrt{\med{\sum_{i=1}^n a_{i,\rperm(i)}^2}} + v\max_{1\leq i,j \leq n}\ac{a_{i,j}}}} \\
\leq \proba{f(A,\rperm)\geq t^2} + \proba{\sqrt{\sum_{i=1}^n a_{i,\rperm(i)}^2} \geq \sqrt{\med{\sum_{i=1}^n a_{i,\rperm(i)}^2}} + v\max_{1\leq i,j \leq n}\ac{a_{i,j}}}, 
\end{multline*}
and \eqref{eq:1stepPos} follows from Theorem \ref{thm:Talagrand} and \eqref{eq:concRac+} in Lemma \ref{lm:concRac}. 

\paragraph{Second step: proof of \eqref{eq:concPosMed+}.}
Consider $C_A=\med{Z}$ so that $\proba{Z\in A}\geq 1/2$.
Thus, if $v=t$ in \eqref{eq:1stepPos}
\begin{align*}
\PP\Bigg(Z \geq \med{Z} &+ t\pa{\sqrt{\med{\sum_{i=1}^n a_{i,\rperm(i)}^2}} + (t\vee C_0) \max_{1\leq i,j\leq n}
\ac{a_{i,j}}}\Bigg)\\
&\leq \proba{Z \geq \med{Z} + t\pa{\sqrt{\med{\sum_{i=1}^n a_{i,\rperm(i)}^2}} + t \max_{1\leq i,j\leq n}
\ac{a_{i,j}}}}\\
&\leq 4 e^{-t^2/16}
\end{align*}
Notice that the maximum with the constant in $(t\vee C_0)$ is not necessary in the case only a control of the right-tail is wanted. 

\paragraph{Third step: proof of \eqref{eq:concPosMed-}.}
Consider now $$C_A = \med{Z} - t \pa{\sqrt{\med{\sum_{i=1}^n a_{i,\rperm(i)}^2}} + v\max_{1\leq i,j \leq n}\ac{a_{i,j}}},$$
so that
$$\proba{Z \geq C_A + t \pa{\sqrt{\med{\sum_{i=1}^n a_{i,\rperm(i)}^2}} + v\max_{1\leq i,j \leq n}\ac{a_{i,j}}}}
= \proba{Z\geq \med{Z}} \geq \frac{1}{2}.$$
Hence, on the one hand, from \eqref{eq:1stepPos}, 
$$\proba{Z\in A} \leq \frac{e^{-t^2/16}}{\pa{\frac{1}{2} - 2 e^{-v^2/16}}}.$$ 
Thus, if $v=C_0=4\sqrt{\ln(8)}$, then $\pa{\frac{1}{2} - 2 e^{-v^2/16}} = \frac{1}{4}$, and $\proba{Z\in A} \leq 4 e^{-t^2/16}.$

On the other hand, as $(t\vee C_0) \geq C_0=v$, 
$$\proba{Z\in A} \geq \proba{Z \leq \med{Z} - t\pa{\sqrt{\med{\sum_{i=1}^n a_{i,\rperm(i)}^2}} + (t\vee C_0) \max_{1\leq i,j\leq n}\ac{a_{i,j}}}},$$ 
which ends the proof of \eqref{eq:concPosMed-}.

\paragraph{Fourth step: proof of \eqref{eq:concPosMed}.}
Both \eqref{eq:concPosMed+} and \eqref{eq:concPosMed-} lead to 
$$\proba{\abs{Z - \med{Z}} > t\pa{\sqrt{\med{\sum_{i=1}^n a_{i,\rperm(i)}^2}} + (t\vee C_0) \max_{1\leq i,j\leq n}\ac{a_{i,j}}}}\leq 8 e^{-t^2/16}.$$
Thus, on the one hand, if $t\geq C_0$, that is $t\vee C_0 = t$, and \eqref{eq:concPosMed} holds. \\
On the other hand, if $t < C_0$,
\begin{multline*}
\proba{\abs{Z - \med{Z}} > t\pa{\sqrt{\med{\sum_{i=1}^n a_{i,\rperm(i)}^2}} + t\max_{1\leq i,j\leq n}\ac{a_{i,j}}}}
\leq  1\\
\leq  e^{C_0^2/16 - t^2/16} = 8 e^{-t^2/16},
\end{multline*}
which ends the proof of the Proposition by taking $x=\sqrt{t}$.

\subsection{Proof of Lemma \ref{lm:ineqEspMedVar}}
\label{sct:lmineqEspMedVar}

Let $X$ be any real random variable.
Recall that $$\med{X} \in \argmin_{m\in\R} \esp{\abs{X-m}}. $$
In particular, thanks to Jensen's inequality, 
\begin{eqnarray}
\abs{\esp{X}-\med{X}} &\leq & \esp{\abs{X-\med{X}}} \nonumber \\
&\leq & \esp{\abs{X-\esp{X}}} \nonumber\\
&\leq & \sqrt{\esp{\pa{X-\esp{X}}^2}} \nonumber\\
&\leq & \sqrt{\var{X}} \label{eq:majdiffmoymed}.
\end{eqnarray}

\subsection{Proof of Proposition \ref{prop:concPosMoy}}
\label{sct:propConcPosMoy}

First, for a better readability, let 
$$M=\max_{1\leq i,j\leq n} \ac{a_{i,j}}\quad \mbox{ and }\quad V=\esp{\sum_{i=1}^n a_{i,\rperm(i)}^2}=\frac{1}{n}\sum_{i,j=1}^n a_{i,j}^2.$$
Then, $\med{\sum_{i=1}^n a_{i,\rperm(i)}^2} \leq 2V$ since by Markov's inequality, for all non-negative random variable X, $\med{X}\leq 2\esp{X}$. 
Indeed, 
$$\frac{1}{2} \leq \proba{X \geq \med{X}} \leq \frac{\esp{X}}{\med{X}}. $$

Thus, by Proposition \ref{prop:concPosMed}, one obtains that, for all $x>0$,
\begin{equation}
\label{eq:tradconcMedMV}
\proba{\abs{Z-\med{Z}}\geq \sqrt{2Vx} + Mx } \leq 8 e^{-x/16}.
\end{equation}

The following is based on Lemma \ref{lm:ineqEspMedVar}, and provides an upper-bound of the difference between the expectation and the median of $Z$. 

\begin{lm}
\label{lm:majdiffmoymed}
With the notation defined above, 
$$
\abs{\esp{Z}-\med{Z}} \leq \sqrt{2 V}.
$$
\end{lm}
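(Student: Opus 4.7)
The plan is to reduce the bound to a variance estimate via the previously established Lemma \ref{lm:ineqEspMedVar}, and then control $\var{Z}$ by $2V$ using Hoeffding's exact formula for the variance of a permuted sum.

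First, I would invoke Lemma \ref{lm:ineqEspMedVar} directly to $Z$, which gives
\[
\abs{\esp{Z}-\med{Z}} \leq \sqrt{\var{Z}}.
\]
So it suffices to show that $\var{Z} \leq 2V$. The case $n=1$ is trivial since $Z=a_{1,1}$ is deterministic, so I may assume $n\geq 2$.

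Next, I would recall the identity (already cited from \cite[Theorem 2]{Hoeffding1951} in the excerpt, and used just after Theorem \ref{thm:concQcqMoy}):
\[
\var{Z} = \frac{1}{n-1} \sum_{i,j=1}^n d_{i,j}^2,
\]
where the $d_{i,j}$'s are the doubly-centered terms defined in \eqref{eq:def_dij}. The key remaining step is the comparison
\[
\sum_{i,j=1}^n d_{i,j}^2 \leq \sum_{i,j=1}^n a_{i,j}^2.
\]
This is nothing but the fact that the ANOVA-type projection $(a_{i,j})\mapsto (d_{i,j})$, which subtracts the row means, column means, and adds back the global mean, is an orthogonal projection onto the subspace of matrices with zero row and column sums, and therefore contracts the $\ell^2$-norm. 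One can see this either by a direct expansion, using that the cross terms vanish by orthogonality of the row-mean, column-mean, and global-mean components, or simply by noting that $d_{i,j}$ is the residual of $a_{i,j}$ after subtracting its best $\ell^2$-approximation by a sum of a row-only and a column-only term.

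Combining these pieces yields
\[
\var{Z} \leq \frac{1}{n-1}\sum_{i,j=1}^n a_{i,j}^2 = \frac{n}{n-1} V \leq 2V,
\]
for $n\geq 2$, and plugging into the inequality from Lemma \ref{lm:ineqEspMedVar} gives $\abs{\esp{Z}-\med{Z}} \leq \sqrt{2V}$. There is no real obstacle here; the only mild subtlety is verifying the contraction of the double-centering, which is standard but worth stating explicitly so the factor $\frac{n}{n-1}\leq 2$ is transparent.
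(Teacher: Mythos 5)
Your proof is correct, but it takes a genuinely different route from the paper's. Both proofs begin by invoking Lemma~\ref{lm:ineqEspMedVar} to reduce the claim to $\var{Z}\leq 2V$, but they diverge at that point. The paper expands $\var{Z}=\sum_{i,j,k,l}a_{i,j}a_{k,l}E_{i,j,k,l}$, works out the sign and magnitude of $E_{i,j,k,l}$ by cases, drops the non-positive contributions, and controls the remaining cross sum by Cauchy--Schwarz, yielding $\var{Z}\leq V+V=2V$. You instead quote Hoeffding's exact identity $\var{Z}=\frac{1}{n-1}\sum_{i,j}d_{i,j}^2$ and observe that the double-centering map $a\mapsto d$ is an orthogonal projection (onto the space of matrices with vanishing row and column sums), so it contracts the $\ell^2$-norm: $\sum d_{i,j}^2\leq\sum a_{i,j}^2$, whence $\var{Z}\leq\frac{n}{n-1}V\leq 2V$ for $n\geq2$. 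Your argument is cleaner and actually yields the sharper bound $\frac{n}{n-1}V$; the paper's version is self-contained in that it does not require the Hoeffding variance formula (which the paper uses elsewhere but only cites, not proves). One small remark: it is worth stating the ANOVA orthogonality explicitly (the cross terms vanish because $\sum_i\alpha_i=\sum_j\beta_j=0$ and $d$ has zero row and column sums), since that is the one step a skeptical reader would want spelled out.
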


\begin{proof}[Proof of Lemma \ref{lm:majdiffmoymed}] 
Lemma \ref{lm:ineqEspMedVar} implies that 
$$
\abs{\esp{Z}-\med{Z}} \leq \sqrt{\var{Z}}.
$$
Let us prove that  
\begin{equation}
\label{eq:majvar}
\var{Z} \leq 2V. 
\end{equation}

Indeed, 
\begin{eqnarray*}
\var{Z} &=& \esp{\pa{\sum_{i=1}^n a_{i,\rperm(i)} - \frac{1}{n}\sum_{i,j=1}^n a_{i,j}}^2} \\
&=& \esp{\pa{\sum_{i,j=1}^n a_{i,j}\pa{\1{\rperm(i)=j}-\frac{1}{n}}}^2} \\
&=& \sum_{i,j=1}^n \sum_{k,l=1}^n a_{i,j} a_{k,l} E_{i,j,k,l},
\end{eqnarray*}
where $$E_{i,j,k,l}=\esp{\pa{\1{\rperm(i)=j}-\frac{1}{n}}\pa{\1{\rperm(k)=l}-\frac{1}{n}}} = \esp{\1{\rperm(i)=j}\1{\rperm(k)=l}} - \frac{1}{n^2}.$$
In particular, 
\begin{equation*}
E_{i,j,k,l} = \left\{ \begin{array}{ll} 
\displaystyle\frac{1}{n}-\frac{1}{n^2} \leq \frac{1}{n} & \mbox{if } i=k \mbox{ and } j=l, \\ \\
\displaystyle\frac{-1}{n^2}\leq 0 & \mbox{if } i=k \mbox{ and } j\neq l \mbox{ or } i\neq k \mbox{ and } j=l, \\ \\
\displaystyle\frac{1}{n(n-1)} - \frac{1}{n^2} = \frac{1}{n^2(n-1)} & \mbox{if } i\neq k \mbox{ and } j\neq l. 
\end{array}\right. 
\end{equation*}
Therefore, from the Cauchy-Schwarz inequality applied to the second sum bellow (of $n^2(n-1)^2$ terms), one obtains
\begin{eqnarray*}
\var{Z} &\leq & \frac{1}{n} \sum_{i,j=1}^n a_{i,j}^2 + \frac{1}{n^2(n-1)} \sum_{i\neq k} \sum_{j\neq l} a_{i,j} a_{k,l} \\
&\leq & V + \frac{\sqrt{n^2(n-1)^2}}{n^2(n-1)} \sqrt{\sum_{i\neq k}\sum_{j\neq l} a_{i,j}^2 a_{k,l}^2} \\
&\leq & V + \frac{1}{n} \sqrt{\sum_{i,j} a_{i,j}^2\sum_{k,l}a_{k,l}^2} \\
&=& 2V.
\end{eqnarray*}
Finally, combining \eqref{eq:majdiffmoymed} and \eqref{eq:majvar} ends the proof of Lemma \ref{lm:majdiffmoymed}. 
\end{proof}


Therefore, one deduces from Lemma \ref{lm:majdiffmoymed} and Equation \eqref{eq:tradconcMedMV} that for all $x>0$, 
\begin{equation}
\label{eq:concMoyPlusEta}
\proba{\abs{Z-\esp{Z}} \geq \sqrt{2 V} + \sqrt{2 V x} + M x} \leq 8e^{-x/16}.
\end{equation}

Now, as in \cite[Corollary 2.11]{BoucheronLugosiMassart2013}, introduce $h_1 : u\in\R^+ \mapsto 1+u-\sqrt{1+2u}$. Then, in particular, $h_1$ is non-decreasing, convex, one to one function on $\R^+$ with inverse function $h_1^{-1} : v\in\R^+ \mapsto v+\sqrt{2v}.$ 
Indeed, 
\begin{eqnarray*}
h_1\pa{h_1^{-1}(v)} &=& 1+v+\sqrt{2v} - \sqrt{1+2v+2\sqrt{2v}} \\ 
&=& 1 + v +\sqrt{2v} - \sqrt{\pa{1+\sqrt{2v}}^2} =v,
\end{eqnarray*}
and 
\begin{eqnarray*}
h_1^{-1}\pa{h_1(u)} &=& 1+u-\sqrt{1+2u} + \sqrt{2 + 2u-2\sqrt{1+2u}} \\
&=& u + 1-\sqrt{1+2u} + \sqrt{1 -2\sqrt{1+2u} + 1+2u} \\
&=& 1+u-\sqrt{1+2u} + \sqrt{\pa{1-\sqrt{1+2u}}^2} = u. \\
\end{eqnarray*}

Consider $\ca$ and $\cc$ defined by $\ca = V/M$ and $\cc = M^2/V$, such that $\ca \cc = M $ and $\ca^2 \cc = V$ and thus
$$\sqrt{2 V x} + M x  = \ca h_1^{-1}(\cc x).$$

Then, from \eqref{eq:concMoyPlusEta}, 
$$\proba{\abs{Z-\esp{Z}}\geq \sqrt{2\ca^2 \cc} + \ca h_1^{-1}(\cc x)}\leq 8 e^{-x/16}.$$

\medskip
Let $t>0$, and consider the two following cases.

\paragraph{1st case:} if $t \geq \sqrt{2V} = \sqrt{2\ca^2 \cc}$, 
then define $\displaystyle x=\frac{1}{\cc}h_1\pa{\frac{t}{\ca}-\sqrt{2\cc}}$ such that $t = \sqrt{2\ca^2 \cc}+\ca h_1^{-1}(\cc x)$. Then, 
$$\proba{\abs{Z-\esp{Z}}\geq t} \leq 8 \exp\pa{-\frac{1}{16\cc}h_1\pa{\frac{t}{\ca}-\sqrt{2\cc}}}. $$
Yet, by convexity of $h_1$, 
$$h_1\pa{\frac{t}{\ca}-\sqrt{2\cc}} \geq 2 h_1\pa{\frac{t}{2\ca}} - h_1\pa{\sqrt{2\cc}}.$$
Hence,
$$\proba{\abs{Z-\esp{Z}}\geq t} \leq 8 \exp\pa{\frac{1}{16\cc}h_1\pa{\sqrt{2\cc}}}\exp\pa{-\frac{1}{8\cc}h_1\pa{\frac{t}{2\ca}}}.$$

Moreover, 
$\sqrt{2\cc} \leq \cc + \sqrt{2\cc} = h_1^{-1} \pa{\cc},$
hence $$\frac{1}{16\cc}h_1\pa{\sqrt{2\cc}}\leq \frac{1}{16}.$$
So finally in this case, 
\begin{equation}
\label{eq:grandst}
\proba{\abs{Z-\esp{Z}}\geq t} \leq 8 e^{1/16}\exp\pa{-\frac{1}{8\cc}h_1\pa{\frac{t}{2\ca}}}.
\end{equation}

\paragraph{2nd case:} if $t < \sqrt{2V} = \sqrt{2\ca^2 \cc}$, 
\begin{equation*}
\proba{\abs{Z-\esp{Z}}\geq t} \quad\leq\quad 1
\quad = \quad  \exp\pa{\frac{1}{8\cc}h_1\pa{\frac{t}{2\ca}}}\exp\pa{-\frac{1}{8\cc}h_1\pa{\frac{t}{2\ca}}}
\end{equation*} 

Moreover, in this case, since $\sqrt{2\cc}/2 \leq h_1^{-1}(\cc/4)$, hence
$$\frac{1}{8\cc} h_1\pa{\frac{t}{2\ca}} \leq \frac{1}{8\cc} h_1\pa{\frac{\sqrt{2\cc}}{2}}\leq \frac{1}{32},$$
and thus
\begin{equation}
\label{eq:petitst}
\proba{\abs{Z-\esp{Z}}\geq t} \leq  e^{1/32}\exp\pa{-\frac{1}{8\cc}h_1\pa{\frac{t}{2\ca}}}.
\end{equation} 

\medskip

\paragraph{}
Finally, combining \eqref{eq:grandst} and \eqref{eq:petitst} leads, in all cases, to 
\begin{equation}
\label{eq:toust}
\proba{\abs{Z-\esp{Z}}\geq t} \leq 8e^{1/16} \exp\pa{-\frac{1}{8\cc}h_1\pa{\frac{t}{2\ca}}}.
\end{equation}

Now, in order to obtain the Bernstein-type inequality, let $\displaystyle x = \frac{2}{\cc}h_1\!\pa{\frac{t}{2\ca}}$, then 
$$t = 2\ca h_1^{-1}\pa{\frac{\cc x}{2}} = \ca\cc x + 2\sqrt{\ca^2\cc x} = 2\sqrt{V x} + Mx,$$
and thus for all $x>0$, 
\begin{equation}
\proba{\abs{Z-\esp{Z}}\geq 2\sqrt{V x} + M x}\leq 8e^{1/16} \exp\pa{-\frac{x}{16}},
\end{equation}
which ends the proof of the Proposition.

\subsection{Proof of Corollary \ref{coro:concPosMoy}}
\label{sct:coroConcPosMoy}

Consider the same notation as in both Proposition \ref{prop:concPosMoy} and its proof.
This proof follows the one of \cite[Corollary 2.10]{Massart2007}. 
Notice that for all $u\geq 0$,
$$h_1(u)\geq \frac{u^2}{2(1+u)}.$$ 
Hence, from \eqref{eq:toust} in the proof of Proposition \ref{prop:concPosMoy}, for all $t \geq 0$, 
\begin{eqnarray*}
\proba{\abs{Z-\esp{Z}}\geq t} &\leq & 8e^{1/16} \exp\pa{-\frac{1}{8\cc}h_1\pa{\frac{t}{2\ca}}} \\
&\leq & 8e^{1/16} \exp\pa{-\frac{t^2}{64\ca^2\cc\pa{1+t/2\ca}}} \\
&=& 8e^{1/16} \exp\pa{-\frac{t^2}{32\pa{2\ca^2\cc+\ca\cc t}}} \\
&=& 8e^{1/16} \exp\pa{-\frac{t^2}{32\pa{V+M t}}}. 
\end{eqnarray*}
which ends the proof of the Corollary.

\subsection{Proof of Theorem \ref{thm:concQcqMoy}}
\label{sct:thmConcQcqMoy}

For a better readability, introduce  
$a_{i,j}^+ = a_{i,j}\1{a_{i,j}\geq 0}$ (respectively $a_{i,j}^- = -a_{i,j}\1{a_{i,j} < 0}$), and 
denote $Z^+ = \sum_{i=1}^n a_{i,\rperm(i)}^+$ (respectively $Z^- = \sum_{i=1}^n a_{i,\rperm(i)}^-$).
Then $$Z = \sum_{i=1}^n a_{i,\rperm(i)} = Z^+ - Z^-.$$

Moreover, if $v$ (respectively $v^+$ and $v^-$) denotes $\frac{1}{n}\sum_{i,j=1}^n a_{i,j}^2$ (respectively $\frac{1}{n}\sum_{i,j=1}^n (a_{i,j}^+)^2$ and $\frac{1}{n}\sum_{i,j=1}^n (a_{i,j}^-)^2$), then $v=v^+ + v^-$ and, from the concavity property of the square root function, 
$$\sqrt{2v}\geq \sqrt{v^+} + \sqrt{v^-}.$$

Furthermore, if $M^+$ (respectively $M^-$) denotes $\max_{1\leq i,j\leq n}\{a_{i,j}^+\}$ (respectively $\max_{1\leq i,j\leq n}\{a_{i,j}^-\}$), then $2M=2\max_{1\leq i,j\leq n}\ac{|a_{i,j}|} \geq M^+ + M^-$. 

Finally, applying Proposition \ref{prop:concPosMoy} to $Z^+$ and $Z^-$ which are both sums of non-negative numbers leads to
\begin{eqnarray*}
\PP\Big(|Z \!\! &-&\!\! \esp{Z}| \ \geq\ 2\sqrt{2vx}\ +\ 2Mx\Big) \\
&\leq & \proba{\abs{Z^+ -\esp{Z^+}} + \abs{Z^- -\esp{Z^-}} \geq 2\sqrt{v^+ x} + M^+ x + 2\sqrt{v^- x} + M^- x} \\
&\leq &\proba{\abs{Z^+ -\esp{Z^+}} \geq 2\sqrt{v^+ x} + M^+ x} +\ \proba{\abs{Z^- -\esp{Z^-}} \geq 2\sqrt{v^- x} + M^- x} \\
&\leq & 16e^{1/16} \exp\pa{-\frac{x}{16}},
\end{eqnarray*}
which ends the proof of the Theorem.

\subsection{Proof of Corollary \ref{coro:concQcqMoy}}
\label{sct:thmConcQcqMoy}

Consider the same notation as in the proof of Theorem \ref{thm:concQcqMoy}, and let $t>0$.  
Let $M$ denote the maximum $\max_{1\leq i,j\leq n}\ac{|a_{i,j}|}$. On the one hand, $M^+ \leq M$ and $M^- \leq M$, and on the other hand, $v^+\leq v$ and $v^- \leq v$. 
Therefore, applying Corollary \ref{coro:concPosMoy}, one obtains
\begin{eqnarray*}
\proba{\abs{Z-\esp{Z}}\geq t} 
&\leq & \proba{\abs{Z^+ -\esp{Z^+}} + \abs{Z^- -\esp{Z^-}}\geq t} \\
&\leq & \proba{\abs{Z^+ -\esp{Z^+}}\geq t/2} + \proba{\abs{Z^- -\esp{Z^-}}\geq t/2} \\
&\leq & 8 e^{1/16} \exp\pa{\frac{-(t/2)^2}{16\pa{4v^+ + 2M^+ t/2}}} + 8 e^{1/16}\exp\pa{\frac{-(t/2)^2}{16\pa{4v^- + 2M^- t/2}}} \\
&\leq & 16e^{1/16} \exp\pa{\frac{-t^2}{64\pa{4 v + M t}}},
\end{eqnarray*}
which leads to the following intermediate result
\begin{equation}
\label{eq:concQcqMoyInter}
\proba{\abs{Z-\esp{Z}}\geq t}\leq 16e^{1/16} \exp\pa{\frac{-t^2}{64\pa{4\frac{1}{n}\sum_{i,j=1}^n a_{i,j}^2 + \max_{1\leq i,j\leq n} \ac{\abs{a_{i,j}}} t}}}.
\end{equation}

In order to make the variance appear, consider Hoeffding's centering trick recalled in \eqref{eq:def_dij} and introduce 
$$d_{i,j} = a_{i,j} - \frac{1}{n} \sum_{k=1}^n a_{k,j} -\frac{1}{n} \sum_{l=1}^n a_{i,l} + \frac{1}{n^2} \sum_{k,l=1}^n a_{k,l}
=\frac{1}{n^2} \sum_{k,l=1}^n \pa{a_{i,j}-a_{k,j}-a_{i,l}+a_{k,l}}.$$
One may easily verify that for all $i_0$ and $j_0$, $\sum_{i=1}^n d_{i,j_0} = \sum_{j=1}^n d_{i_0,j} = 0$.
Moreover, 
$$\sum_{i=1}^n d_{i,\rperm(i)} = \sum_{i=1}^n a_{i,\rperm(i)} - \frac{1}{n} \sum_{i,j=1}^n a_{i,j}=Z-\esp{Z} \quad \mbox{and}\quad \esp{\sum_{i=1}^n d_{i,\rperm(i)}} = \frac{1}{n}\sum_{i,j=1}^n d_{i,j} = 0.$$
In particular, applying equation \eqref{eq:concQcqMoyInter} to the permuted sum of the $d_{i,j}$'s leads to 
\begin{equation}
\label{eq:concQcqMoyHoeffding}
\proba{\abs{Z-\esp{Z}}\geq t}\leq 16e^{1/16} \exp\pa{\frac{-t^2}{64\pa{4\frac{1}{n}\sum_{i,j=1}^n d_{i,j}^2 + \max_{1\leq i,j\leq n} \ac{\abs{d_{i,j}}} t}}}.
\end{equation}
Then, it is sufficient to notice that, on the one hand,  from \cite[Theorem 2]{Hoeffding1951}, 
$$\var{Z}=(n-1)^{-1}\sum_{i,j=1}^n d_{i,j}^2\geq n^{-1}\sum_{i,j=1}^n d_{i,j}^2,$$ and on the other hand, $$\max_{1\leq i,j\leq n} \ac{\abs{d_{i,j}}} \leq 4 \max_{1\leq i,j\leq n} \ac{\abs{a_{i,j}}}, $$
to end the proof of Corollary \ref{coro:concQcqMoy}.

\subsection{Proof of Proposition~\ref{prop:control_quantile}}
\label{sect:proof_prop_control_var_quantile}

The proof of Proof of Proposition~\ref{prop:control_quantile} is divided into two steps. The first step consists in controlling the conditional quantile $q_{1-\alpha}(\X_n)$ and the second step provides an upper-bound for $q_{1-\beta/2}^\alpha$. 

\begin{description}
\item[1st step.] Let us prove \eqref{eq:control_conditional_quantile}, that is
$$
q_{1-\alpha}(\X_n) \leq \frac{C'}{n-1} \ac{\sqrt{\frac{1}{n} \sum_{i,j=1}^n\varphi_\delta^2(X_i^1,X_j^2)} \sqrt{\ln\pa{\frac{c_0}{\alpha}}} + \norm{\varphi_\delta}_{\infty}\ln\pa{\frac{c_0}{\alpha}}}. 
$$
Introduce $\tilde Z(\X_n) = \sum_{i=1}^n \varphi_\delta (X_i^1,X_{\rperm(i)}^2)$. Then, notice that 
\begin{equation}
\label{eq:writing_stat}
T_\delta^\rperm (\X_n) = \frac{1}{n-1}\pa{\tilde Z(\X_n) - \esp{\tilde Z(\X_n)\middle| \X_n}}.
\end{equation}
Therefore, applying Theorem~\ref{thm:concQcqMoy} to the conditional probability given $\X_n$, one obtains that there exist universal positive constants $c_0$ and $c_1$ such that, for all $x>0$, 
\begin{multline*}
\proba{\abs{\tilde Z(\X_n)-\esp{\tilde Z(\X_n)\middle| \X_n}} \geq  2\sqrt{2\pa{\frac{1}{n}\sum_{i,j=1}^n \varphi_\delta^2(X_i^1,X_j^2)}x} + 2\norm{\varphi_\delta}_\infty x\middle|\X_n} \\
\leq c_0 \exp\pa{-c_1 x}.
\end{multline*}
In particular, from \eqref{eq:writing_stat}, one obtains 
\begin{multline*}
\proba{\abs{T_\delta(\X_n^{\rperm})} \geq  \frac{2}{n-1}\pa{\sqrt{2\pa{\frac{1}{n}\sum_{i,j=1}^n \varphi_\delta^2(X_i^1,X_j^2)}x} + \norm{\varphi_\delta}_\infty x}\middle| \X_n} \\
\leq c_0 \exp\pa{- c_1 x}.
\end{multline*}

Yet, by definition of the quantile, $q_{1-\alpha}(\X_n)$ is the smallest $u$ such that 
$$\proba{\abs{T_\delta(\X_n^{\rperm})} \geq u\middle|\X_n} \leq \alpha.$$ 
Thus taking $x$ such that $c_0 \exp\pa{-c_1 x}=\alpha$ , that is $x=c_1^{-1} \ln\pa{c_0/\alpha}$, one obtains \eqref{eq:control_conditional_quantile} with $C' = 2 \max\ac{\sqrt{2/c_1}, 1/c_1}$ which is a universal positive constant. 

\item[2nd step.] Let us now control the quantile $q_{1-\beta/2}^{\alpha}$.
Since \eqref{eq:control_conditional_quantile} is always true, by definition of $q_{1-\beta/2}^\alpha$, one has that $q_{1-\beta/2}^\alpha$ is upper bounded by the $(1-\beta/2)$-quantile of the right-hand side of \eqref{eq:control_conditional_quantile}.
Yet, the only randomness left in the right-hand side of \eqref{eq:control_conditional_quantile} comes from the randomness of $\frac{1}{n}\sum_{i,j=1}^n \varphi_\delta^2(X_i^1,X_j^2)$, and thus it is sufficient to control its $(1-\beta/2)$-quantile.

Besides, applying Markov's inequality, one obtains for all $x>0$, 
$$\proba{\frac{1}{n}\sum_{i,j=1}^n \varphi_\delta^2(X_i^1,X_j^2) \geq x} \leq \frac{\esp{\frac{1}{n}\sum_{i,j=1}^n \varphi_\delta^2(X_i^1,X_j^2) }}{x}, $$
with $\esp{\frac{1}{n}\sum_{i,j=1}^n \varphi_\delta^2(X_i^1,X_j^2) } = \esps{P}{\varphi_\delta^2} + (n-1) \esps{\indep}{\varphi_\delta^2}$, and thus, taking $$x=\frac{2}{\beta}\pa{\esps{P}{\varphi_\delta^2} + (n-1) \esps{\indep}{\varphi_\delta^2}},$$
one has that the $(1-\beta/2)$-quantile of $\frac{1}{n}\sum_{i,j=1}^n \varphi_\delta^2(X_i^1,X_j^2)$ is upper bounded by $x$, and thus, the  $(1-\beta/2)$-quantile of $\sqrt{\frac{1}{n}\sum_{i,j=1}^n \varphi_\delta^2(X_i^1,X_j^2)}$ is itself upper bounded by 
$$\sqrt{\frac{2}{\beta}}\pa{\sqrt{\esps{P}{\varphi_\delta^2}} + \sqrt{n}\sqrt{\esps{\indep}{\varphi_\delta^2}}}.$$
Finally, 
\begin{eqnarray*}
q_{1-\beta/2}^\alpha &\leq & \frac{2C'}{n} \ac{\sqrt{\frac{2}{\beta}}\pa{\sqrt{\esps{P}{\varphi_\delta^2}} + \sqrt{n}\sqrt{\esps{\indep}{\varphi_\delta^2}}} \sqrt{\ln\pa{\frac{c_0}{\alpha}}} + \norm{\varphi_\delta}_{\infty}\ln\pa{\frac{c_0}{\alpha}}}.
\end{eqnarray*}
which is exactly \eqref{eq:control_quantile} for any constant $C\geq 2C'$.
\end{description}

\section{Proof of Lemma~\ref{lm:calcul_variance}}
\label{sct:proof_lm_calcul_variance}

Let us now prove Lemma \ref{lm:calcul_variance}. 
Let $n\geq 4$ and $\X_n$ be an i.i.d. sample with distribution $P$. 
First notice that one can write
$$T_\delta(\X_n) = \frac{1}{n(n-1)} \sum_{i\neq j} \pa{\varphi_\delta (X_i^1,X_i^2) - \varphi_\delta(X_i^1,X_j^2)}.$$
In particular, one recovers that $\esp{T_\delta(\X_n)} = \esps{P}{\varphi_\delta}-\esps{\indep}{\varphi_\delta}$. 

For a better readability, let us introduce for all $i\neq j$ in \ac{1,2,\ldots,n}, 
$$Y_i = \varphi_\delta(X_i^1,X_i^2)-\esps{P}{\varphi_\delta} \quad \mbox{and}\quad Z_{i,j} = \varphi_\delta(X_i^1,X_j^2)-\esps{\indep}{\varphi_\delta}.$$ Then, 
\begin{equation}
\label{eq:propYZ}
\esp{Y_i} = \esp{Z_{i,j}} = 0,\quad \mbox{and} \quad 
\left\{\begin{array}{l}
\displaystyle \esp{Y_i^2} = \vars{P}{\varphi_\delta}\leq \esps{P}{\varphi_\delta^2},\\ 
\displaystyle \esp{Z_{i,j}^2} = \vars{\indep}{\varphi_\delta}\leq \esps{\indep}{\varphi_\delta^2}.
\end{array}\right.
\end{equation}
 
One can write $$T_\delta(\X_n) - \esp{T_\delta(\X_n)} = \frac{1}{n(n-1)} \sum_{i\neq j} \pa{Y_i - Z_{i,j}},$$ and thus, 
\begin{eqnarray*}
\var{T_\delta(\X_n)} &=& \esp{\pa{\frac{1}{n(n-1)} \sum_{i\neq j} \pa{Y_i - Z_{i,j}}}^2} \\ 
&=& \frac{1}{n^2(n-1)^2} \sum_{i\neq j} \sum_{k\neq l} \esp{\pa{Y_i-Z_{i,j}}\pa{Y_k - Z_{k,l}}} \\
&=& A_n - 2 B_n + C_n, \\
\end{eqnarray*}
with 
$$A_n = \frac{1}{n^2} \sum_{i,k=1}^n \esp{Y_i Y_k},$$ 
$$B_n = \frac{1}{n^2(n-1)} \sum_{i=1}^n\sum_{k\neq l} \esp{Y_i Z_{k,l}},$$
$$C_n = \frac{1}{n^2(n-1)^2} \sum_{i\neq j} \sum_{k\neq l} \esp{Z_{i,j} Z_{k,l}},$$
where each sum is taken for indexes contained in $\ac{1,2,\ldots,n}$. 
In particular, since just an upper-bound of the variance is needed, it is sufficient to write 
\begin{equation}
\label{eq:control_variance1}
\var{T_\delta(\X_n)} \leq |A_n| + 2 |B_n| + |C_n|, 
\end{equation}
and to study each term separately.

\paragraph{Study of $A_n$.} Since by construction, the $Y_i$'s are centered, and independent (as the $X_i$'s are), 
\begin{eqnarray*}
A_n &=& \frac{1}{n^2} \pa{ \sum_{i} \esp{Y_i^2} + \sum_{i\neq k} \esp{Y_i}\esp{Y_k}} \\ 
&=& \frac{1}{n} \esp{Y_1^2}, \\
\end{eqnarray*}
and in particular, from \eqref{eq:propYZ}, 
\begin{equation}
\label{eq:control_A}
\abs{A_n} \leq \frac{1}{n} \esps{P}{\varphi_\delta^2}. 
\end{equation}

\paragraph{Study of $B_n$.} If $i$, $k$ and $l$ are all different, using once again the independence of the $X_i$'s and a centering argument, then $\esp{Y_{i}Z_{k,l}} = \esp{Y_{i}}\esp{Z_{k,l}} = 0$. Thus 
\begin{eqnarray*}
B_n &=& \frac{1}{n^2 (n-1)} \sum_{i\neq k} \pa{ \esp{Y_i Z_{i,k}} + \esp{Y_i Z_{k,i}} } \\ 
&=& \frac{1}{n} \pa{\esp{Y_1 Z_{1,2}} + \esp{Y_1 Z_{2,1}}}.
\end{eqnarray*}
In particular, applying the Cauchy-Schwartz inequality, and from \eqref{eq:propYZ}, one obtains
\begin{equation}
\label{eq:control_B}
\abs{B_n} \leq  \frac{2}{n}\sqrt{\esp{Y_1^2} \esp{Z_{1,2}^2}}
\leq  \frac{2}{n}\sqrt{\esps{P}{\varphi_\delta^2} \esps{\indep}{\varphi_\delta^2}}.
\end{equation}

\paragraph{Study of $C_n$.} Still by an independence and a centering argument, if $i$, $j$, $k$ and $l$ are all different, 
$\esp{Z_{i,j} Z_{k,l}} = \esp{Z_{i,j}}\esp{Z_{k,l}} = 0$. Thus, if $I_n^{[3]}$ denotes the set of triplets $(i,j,k)$ in $\ac{1,\dots,n}^3$ which are all different, one obtains
\begin{eqnarray*}
C_n &=& \frac{1}{n^2 (n-1)^2} \Bigg\{ \sum_{(i,j,k) \in I_n^{[3]}} \Big( \esp{Z_{i,j} Z_{i,k}} + 2 \esp{Z_{i,j} Z_{k,i}} +  \esp{Z_{j,i} Z_{k,i}} \Big) \\ 
&& \quad\quad\quad\quad\quad\quad +\ \sum_{i\neq j} \Big( \esp{Z_{i,j}^2} + \esp{Z_{i,j} Z_{j,i} }\Big) \Bigg\} \\
&=& \frac{n-2}{n(n-1)} \pa{\esp{Z_{1,2} Z_{1,3}} + 2 \esp{Z_{1,2} Z_{3,1}} + \esp{Z_{2,1} Z_{3,1}} }  \\ 
&& +\ \frac{1}{n(n-1)}\pa{\esp{Z_{1,2}^2} + \esp{Z_{1,2} Z_{2,1} }}.
\end{eqnarray*}
In particular, applying the Cauchy-Schwartz inequality, and using \eqref{eq:propYZ}, each expectation in the previous equation satisfies 
$\esp{Z_{i,j} Z_{k,l}}\leq \esp{Z_{1,2}^2} \leq \esps{\indep}{\varphi_\delta^2}$, and thus

\begin{equation}
\label{eq:control_C}
\abs{C_n} \leq  \pa{\frac{4(n-2)}{n(n-1)} + \frac{2}{n(n-1)}}\esps{\indep}{\varphi_\delta^2}
\leq  \frac{4}{n} \esps{\indep}{\varphi_\delta^2}.
\end{equation}

Finally, combining \eqref{eq:control_variance1}, \eqref{eq:control_A}, \eqref{eq:control_B}, and \eqref{eq:control_C} leads to 
$$\var{T_\delta(\X_n)} \leq \frac{1}{n}\pa{\sqrt{\esps{P}{\varphi_\delta^2}} + 2 \sqrt{\esps{\indep}{\varphi_\delta^2}}}^2, $$
which ends the proof of the Lemma.

%
%

\appendix
\section{A non-asymptotic control of the second kind error rates}
\label{sct:cond_Chebychev_Hoeffding_gen}

Consider the notation from Section \ref{sct:applitestindep}. Since this section focuses on the study of the second kind error rate of the test, in all the sequel, the observation is assumed to satisfy the alternative $(\hyp_1)$. 
Let thus $P$ be an alternative, that is $P\neq P^1\otimes P^2$, $n\geq 4$ and $\X_n=(X_i,\dots,X_n)$ be an i.i.d. sample from distribution $P$. Fix $\alpha$ and $\beta$ be two fixed values in $(0,1)$. Consider $T_\delta$ the test statistic introduced in \eqref{eq:def_stat_T}, the (random) critical value $q_{1-\alpha}(\X_n)$ defined in \eqref{eq:def_crit_valq}, and the corresponding permutation test defined in \eqref{eq:def_test_chap_conc} by $$\Delta_\alpha(\X_n) = \1{T_\delta(\X_n) > q_{1-\alpha}(\X_n)},$$
which precisely rejects independence when $T_\delta(\X_n) > q_{1-\alpha}(\X_n)$. 
Notice that this test is exactly the upper-tailed test by permutation introduced in \cite{AlbertBouretFromontReynaud2015}. 

\paragraph{}
The aim of this section is to provide different conditions on the alternative $P$ ensuring a control of the second kind error rate by a fixed value $\beta>0$, that is $\proba{\Delta_\alpha(\X_n)=0} \leq \beta.$ 
The following steps constitute the first steps of a general study of the separation rates for the previous independence test, and is worked through in the specific case of continuous real-valued random variables in \cite[Chapter 4]{Albert2015PhD}. 

Recall the notation introduced in \eqref{eq:not_esps_general_indep} for a better readability. For all real-valued measurable function $g$ on $\calX^2$, denote respectively 
\begin{equation*}
\esps{P}{g} = \esp{g(X_1^1,X_1^2)}\quad \mbox{and}\quad \esps{\indep}{g} = \esp{g(X_1^1,X_2^2)},
\end{equation*} 
the expectations of $g(X)$ under the alternative $P$ (meaning that $X \sim P$) and under the null hypothesis $(\hyp_0)$ (meaning that $X \sim P^1\otimes P^2$). \\

Assume the following moment assumption holds, that is
\medskip
\\
$\pa{\mc{A}_{Mmt,2}}\quad \textrm{\begin{tabular}{|l} both $\esps{P}{\varphi_\delta^2}<+\infty$ and $\esps{\indep}{\varphi_\delta^2}<+\infty$, \end{tabular}}$ 
\medskip \\
so that all variance and second-order moments exist. 
Then, the following statements hold. 
\begin{enumerate}
\item By Chebychev's inequality, one has $\proba{\Delta_\alpha(\X_n)=0} \leq \beta$  as soon as Condition \eqref{eq:cond_simple_tchebychev} is satisfied, that is 
$$\esp{T_\delta(\X_n)} \geq q^\alpha_{1-\beta/2} + \sqrt{\frac{2}{\beta} \var{T_\delta(\X_n)}}.$$

\item On the one hand,
\begin{equation}
\label{eq:control_variance_gen_gros}
\var{T_\delta(\X_n)} \leq \frac{8}{n} \pa{\esps{P}{\varphi_\delta^2} + \esps{\indep}{\varphi_\delta^2}}, 
\end{equation}

\item On the other hand, in order to control the quantile $q^\alpha_{1-\beta/2}$, let us first upper bound the conditional quantile, following Hoeffding's approach based on the Cauchy-Schwarz inequality, by
\begin{equation}
\label{eq:maj_quantile_Hoeffding}
q_{1-\alpha}(\X_n) \leq \sqrt{\frac{1-\alpha}{\alpha} \var{T_\delta\pa{\X_n^{\rperm}}\middle| \X_n}}.
\end{equation}

\item Markov's inequality allows us to deduce the following bound for the quantile: 
\begin{equation}
\label{eq:maj_quantile_du_quantile}
q^\alpha_{1-\beta/2} \leq 2\sqrt{\frac{1-\alpha}{\alpha}}\sqrt{\frac{2}{\beta}\frac{\pa{\esps{\indep}{\varphi_\delta^2} + \esps{P}{\varphi_\delta^2}}}{n}}.
\end{equation}

\item Finally, combining \eqref{eq:cond_simple_tchebychev}, \eqref{eq:control_variance_gen_gros} and \eqref{eq:maj_quantile_du_quantile} ensures that  $\proba{\Delta_\alpha(\X_n)=0} \leq \beta$ as soon as Condition \eqref{eq:cond_Hoeffding} is satisfied, that is 
$$\esp{T_\delta(\X_n)} \geq \frac{4}{\sqrt{\alpha}}\sqrt{\frac{2}{\beta}\frac{\esp{\varphi_\delta(X_1^1,X_1^2)^2} + \esp{\varphi_\delta(X_1^1,X_2^2)^2}}{n}}.$$

\end{enumerate}


\paragraph{}
This section is divided in five subsections, each one of them respectively proving a point stated above. The first one proves the sufficiency of Condition \eqref{eq:cond_simple_tchebychev} in order to control the second kind error rate. The second, third and fourth ones provide respectively upper-bounds of the variance term, the critical value and the quantile $q^\alpha_{1-\beta/2}$. Finally, the fifth one provides the sufficiency of Condition \eqref{eq:cond_Hoeffding}.

\subsection{A first condition ensuing from Chebychev's inequality}
\label{sct:cond_Chebychev}

In this section, we prove the sufficiency of a first simple condition, derived from Chebychev's inequality in order to control the second error rate. 
Assume that \eqref{eq:cond_simple_tchebychev} is satisfied, that is 
$$\esp{T_\delta(\X_n)} \geq q^\alpha_{1-\beta/2} + \sqrt{\frac{2}{\beta} \var{T_\delta(\X_n)}}.$$
Then, 
\begin{eqnarray}
\proba{\Delta_\alpha(\X_n)=0} &=& \proba{T_\delta(\X_n) \leq q_{1-\alpha}(\X_n)} \\
&=& \proba{\ac{T_\delta(\X_n) \leq q_{1-\alpha}(\X_n)} \cap \ac{q_{1-\alpha}(\X_n) \leq q^\alpha_{1-\beta/2}}} \nonumber \\
&& + \ \proba{\ac{T_\delta(\X_n) \leq q_{1-\alpha}(\X_n)} \cap \ac{q_{1-\alpha}(\X_n) > q^\alpha_{1-\beta/2}}} \nonumber \\
&\leq & \proba{T_\delta(\X_n) \leq q^\alpha_{1-\beta/2}} + \proba{q_{1-\alpha}(\X_n) > q^\alpha_{1-\beta/2}} \nonumber \\
&\leq & \proba{T_\delta(\X_n) \leq q^\alpha_{1-\beta/2}} + \frac{\beta}{2}, \label{eq:cheby1}
\end{eqnarray}
by definition of the quantile $q^\alpha_{1-\beta/2}$.
Yet, from \eqref{eq:cond_simple_tchebychev} one obtains from Chebychev's inequality that
\begin{eqnarray}
\proba{T_\delta(\X_n) \leq q^\alpha_{1-\beta/2}} &\leq & \proba{T_\delta(\X_n) \leq \esp{T_\delta(\X_n)} - \sqrt{\frac{2}{\beta} \var{T_\delta(\X_n)}}} \nonumber \\
&\leq & \proba{\abs{T_\delta(\X_n) - \esp{T_\delta(\X_n)}} \geq \sqrt{\frac{2}{\beta} \var{T_\delta(\X_n)}}} \nonumber \\
&\leq & \frac{\beta}{2}. \label{eq:cheby2}
\end{eqnarray}

Finally, both \eqref{eq:cheby1} and \eqref{eq:cheby2} lead to the desired control $\proba{\Delta_\alpha(\X_n)=0}\leq \beta$ which ends the proof. 

\subsection{Control of the variance in the general case}
\label{sct:control_var_general}

To upper bound the variance term, we apply Lemma \ref{lm:calcul_variance} which directly implies that

$$ \var{T_\delta(\X_n)} \leq  \frac{2}{n} \pa{\esps{P}{\varphi_\delta^2} + 4 \esps{\indep}{\varphi_\delta^2}}, $$
which directly leads to \eqref{eq:control_variance_gen_gros}.

\subsection{Control of the critical value based on Hoeffding's approach}
\label{sct:control_quantile_cond}

This section is devoted to the proof the inequality \eqref{eq:maj_quantile_Hoeffding}, namely
$$q_{1-\alpha}(\X_n) \leq \sqrt{\frac{1-\alpha}{\alpha} \var{T_\delta\pa{\X_n^{\rperm}}\middle| \X_n}}.$$

The proof of this upper-bound follows Hoeffding's approach in \cite{Hoeffding1952}, and relies on a normalizing trick, and the Cauchy-Schwarz inequality.
From now on, for a better readability, denote respectively $\Estar{\cdot}$ and $\Vstar{\cdot}$ the conditional expectation and variance given the sample $\X_n$. 

As in Hoeffding \cite{Hoeffding1952}, the first step is to center and normalize the permuted test statistic. 
Yet, by construction the permuted test statistic is automatically centered, that is $\Estar{T_\delta\pa{\X_n^{\rperm}}} = 0$, as one can notice that  
$$T_\delta\pa{\X_n^{\rperm}}=\frac{1}{n-1}\pa{\sum_{i=1}^n \varphi_\delta\pa{X_i^1,X_{\rperm(i)}^2} - \Estar{\sum_{i=1}^n \varphi_\delta\pa{X_i^1,X_{\rperm(i)}^2}}}. $$
Therefore, just consider the normalizing term
$$\nu_n(\ds X_n) = \Vstar{T_\delta\pa{\X_n^{\rperm}}} = \Estar{T_\delta\pa{\X_n^{\rperm}}^2}
= \frac{1}{n!}\sum_{\perm\in\Sn{n}} \pa{T_\delta\pa{\X_n^{\perm}}}^2.$$
Two cases appear: either $\nu_n(\ds X_n)=0$ or not. 

\paragraph{}
In the first case, the nullity of the conditional variance implies that all the permutations of the test statistic are equal.
Hence, for all permutation $\perm$ of $\ac{1,\dots,n}$, one has $T_\delta(\X_n^\perm) = T_\delta(\X_n)$.  Since the centering term $\Estar{\sum_{i=1}^n \varphi_\delta\pa{X_i^1,X_{\rperm(i)}^2}}=n^{-1}\sum_{i,j=1}^n\varphi_\delta(X_i^1,X_j^2)$ is permutation invariant, one obtains the equality of the permuted sums, that is 
$$\sum_{i=1}^n \varphi_\delta\pa{X_i^1,X_{\perm(i)}^2} = \sum_{i=1}^n \varphi_\delta\pa{X_i^1,X_i^2},$$ 
and this for all permutation $\perm$. 
In particular, the centering term is also equal to $\sum_{i=1}^n \varphi_\delta\pa{X_i^1,X_i^2}$. Indeed, by invariance of the sum (applied in the third equality below), 
\begin{eqnarray*}
\frac{1}{n} \sum_{i,j=1}^n \varphi_\delta\pa{X_i^1,X_j^2} &=& \frac{1}{n} \sum_{i,j=1}^n \varphi_\delta\pa{X_i^1,X_j^2}\cro{\frac{1}{(n-1)!}\sum_{\perm\in\Sn{n}} \1{\perm(i)=j}} \\
&=& \frac{1}{n!} \sum_{\perm\in\Sn{n}} \sum_{i=1}^n \varphi_\delta\pa{X_i^1,X_{\perm(i)}^2} \cro{\sum_{j=1}^n \1{\perm(i)=j}} \\
&=& \frac{1}{n!} \sum_{\perm\in\Sn{n}} \pa{\sum_{i=1}^n \varphi_\delta\pa{X_i^1,X_i^2}} \\
&=& \sum_{i=1}^n \varphi_\delta\pa{X_i^1,X_i^2}. 
\end{eqnarray*}
Therefore, $T_\delta(\X_n)$ is equal to zero, and thus, so is $q_{1-\alpha}(\X_n)$.
Finally, inequality \eqref{eq:maj_quantile_Hoeffding_norm} is satisfied since 
$$q_{1-\alpha}(\X_n) = 0 \leq 0 = \sqrt{\frac{1-\alpha}{\alpha} \var{T_\delta\pa{\X_n^{\rperm}}\middle| \X_n}}.$$

\paragraph{}
Consider now the second case, and assume $\nu_n\pa{\ds X_n}>0$. Let us introduce the (centered and) normalized statistic
$$T_\delta'\pa{\ds X_n} = \frac{1}{\sqrt{\nu_n(\ds X_n)}}\pa{T_\delta(\X_n)}.$$
In particular, the new statistic $T_\delta'\pa{\ds X_n}$ satisfies 
$$\Estar{T_\delta'\pa{\X_n^{\rperm}}} = 0 \quad \mbox{and}\quad \Vstar{T_\delta'\pa{\X_n^{\rperm}}}\leq 1.$$

One may moreover notice that the normalizing term $\nu_n(\X_n)$ is permutation invariant, that is, for all permutations $\perm$ and $\perm'$ in $\Sn{n}$, 
$$\nu_n\pa{\ds X_n^{\perm}}=\nu_n\pa{\ds X_n}=\nu_n\pa{\ds X_n^{\perm'}}. $$
In particular, since $\nu_n\pa{\ds X_n} >0$, 
$$T_\delta\pa{\X_n^{\perm}}\leq T_\delta\pa{\X_n^{\perm'}}\quad \Leftrightarrow \quad T_\delta'\pa{\X_n^{\perm}}\leq T_\delta'\pa{\X_n^{\perm'}}.$$ 

Therefore, as the test $\Delta_\alpha$ depends only on the comparison of the $\ac{T_\delta\pa{\X_n^{\perm}}}_{\perm\in\Sn{n}}$, the test statistic $T_\delta$ can be replaced by $T_\delta'$, and the new critical value becomes 
\begin{equation}
\label{eq:def_quantile_norm}
q'_{1-\alpha}(\X_n) = T_\delta^{\prime (n!-\lfloor n!\alpha\rfloor)}\pa{\ds X_n} = \frac{T_\delta^{(n!-\lfloor n!\alpha\rfloor)}\pa{\ds X_n}}{\nu_n(\X_n)}=\frac{q_{1-\alpha}(\X_n)}{\nu_n(\X_n)}. 
\end{equation}

Moreover, following the proof of Theorem 2.1. of Hoeffding \cite{Hoeffding1952}, one can show (as below) that 
\begin{equation}
\label{eq:maj_quantile_Hoeffding_norm}
q'_{1-\alpha}(\X_n) \leq \sqrt{\frac{1-\alpha}{\alpha}}.
\end{equation}
Hence, combining \eqref{eq:maj_quantile_Hoeffding_norm} with \eqref{eq:def_quantile_norm} leads straightforwardly to \eqref{eq:maj_quantile_Hoeffding}.  

\paragraph{}
Finally, remains the proof of \eqref{eq:maj_quantile_Hoeffding_norm}. 
There are two cases:
\begin{description}
\item[1\up{st} case:] If $q'_{1-\alpha}(\X_n)\leq 0$, then \eqref{eq:maj_quantile_Hoeffding_norm} is satisfied. 
\item[2\up{nd} case:] If $q'_{1-\alpha}(\X_n) > 0$, then introduce 
$Y=q'_{1-\alpha}(\X_n) - T_\delta'\pa{\X_n^{\rperm}}.$ 

First, since by construction, $\Estar{T_\delta'\pa{\X_n^{\rperm}}}=0$, one directly obtains $\Estar{Y} = q'_{1-\alpha}(\X_n)$. 
Hence, $$0 < q'_{1-\alpha}(\X_n) = \Estar{Y} \leq \Estar{Y\1{Y>0}},$$
and by the Cauchy-Schwarz inequality,  
$$\pa{q'_{1-\alpha}(\X_n)}^2  \quad \leq\quad  \pa{\Estar{Y\1{Y>0}}}^2 \quad 
\leq \quad  \Estar{Y^2} \Estar{\1{Y>0}},$$

Yet, on one hand, 
\begin{eqnarray*}
\Estar{Y^2}
&=& \Estar{\pa{q'_{1-\alpha}(\X_n) - T_\delta'\pa{\X_n^{\rperm}}}^2} \\
&=& \pa{q'_{1-\alpha}(\X_n)}^2 + \Estar{\pa{T_\delta'\pa{\X_n^{\rperm}}}^2} -2 q'_{1-\alpha}(\X_n)\Estar{T_\delta'\pa{\X_n^{\rperm}}} \\ 
&=& \pa{q'_{1-\alpha}(\X_n)}^2 + \Vstar{T_\delta'\pa{\X_n^{\rperm}}} \\
&\leq & \pa{q'_{1-\alpha}(\X_n)}^2 + 1, 
\end{eqnarray*}
since by the normalizing initial step, $\Vstar{T_\delta'\pa{\X_n^{\rperm}}}\leq 1$. 

And, on the other hand, 
\begin{eqnarray*}
\Estar{\1{Y> 0}} &=& \Estar{\1{T_\delta'\pa{\X_n^{\rperm}} < q'_{1-\alpha}(\X_n)}}\\
&=& \frac{\#\ac{\perm\in\Sn{n} \ ;\ T_\delta'\pa{\X_n^{\perm}} < T_\delta^{\prime(n!-\lfloor n!\alpha\rfloor)}\pa{\ds X_n} }}{n!} \\
&\leq & \frac{(n! - \lfloor n!\alpha\rfloor) - 1}{n!} \ =\ 1 - \frac{\lfloor n!\alpha\rfloor+1}{n!} \\
& < & 1-\frac{n!\alpha}{n!} \ =\ 1 - \alpha.
\end{eqnarray*}

So finally, 
$$\pa{q'_{1-\alpha}(\X_n)}^2  \leq \pa{1 - \alpha}\pa{\pa{q'_{1-\alpha}(\X_n)}^2 + 1},$$
which is equivalent to $\pa{q'_{1-\alpha}(\X_n)}^2 \leq  (1-\alpha)/\alpha,$ and thus ends the proof of \eqref{eq:maj_quantile_Hoeffding_norm}.
\end{description}

\subsection{Control of the quantile of the critical value}
\label{sct:control_quantile_du_quantile}

The control of the conditional quantile allows us to upper bound its own quantile $q^\alpha_{1-\beta/2}$ as stated in \eqref{eq:maj_quantile_du_quantile}, that is 
$$q^\alpha_{1-\beta/2} \leq 2\sqrt{\frac{1-\alpha}{\alpha}}\sqrt{\frac{2}{\beta}\frac{\pa{\esps{\indep}{\varphi_\delta^2} + \esps{P}{\varphi_\delta^2}}}{n}}.$$

Indeed, \eqref{eq:maj_quantile_Hoeffding} ensures that
$$q_{1-\alpha}(\X_n) \leq \sqrt{\frac{1-\alpha}{\alpha}} \sqrt{\esp{T_\delta\pa{\X_n^{\rperm}}^2\middle| \X_n}},$$
and in particular, the $(1-\beta/2)$-quantile of $q_{1-\alpha}(\X_n)$ satisfies 
\begin{equation}
\label{eq:maj_quantile_q_quantile_zeta}
q^\alpha_{1-\beta/2} \leq \sqrt{\frac{1-\alpha}{\alpha}}\sqrt{\zeta_{1-\beta/2}}, 
\end{equation}
where $\zeta_{1-\beta/2}$ is the $(1-\beta/2)$-quantile of $\esp{T_\delta\pa{\X_n^{\rperm}}^2\middle| \X_n}$. 
Yet, from Markov's inequality, for all positive $x$, 
$$\proba{\esp{T_\delta\pa{\X_n^{\rperm}}^2\middle| \X_n} \geq x} \leq \frac{\esp{T_\delta\pa{\X_n^{\rperm}}^2}}{x}. $$
In particular, the choice of $x=2\esp{T_\delta\pa{\X_n^{\rperm}}^2}/\beta$ leads to the control of the quantile
\begin{equation}
\label{eq:quantile_zeta_1}
\zeta_{1-\beta/2}\leq \frac{2\esp{T_\delta\pa{\X_n^{\rperm}}^2}}{\beta}. 
\end{equation}

Moreover, noticing that one can write 
$$T_\delta\pa{\X_n^{\rperm}} = \frac{1}{n-1} \sum_{i,j=1}^n \pa{\1{\rperm(i)=j}-\frac{1}{n}}\varphi_\delta(X_i^1,X_j^2),$$
the second-order moment in \eqref{eq:quantile_zeta_1} can be rewritten 
\begin{eqnarray*}
\esp{T_\delta\pa{\X_n^{\rperm}}^2}  
&=& \frac{1}{(n-1)^2} \esp{\pa{\sum_{i,j=1}^n \pa{\1{\rperm(i)=j}-\frac{1}{n}}\varphi_\delta(X_i^1,X_j^2)}^2} \\
&=& \frac{1}{(n-1)^2} \sum_{i,j=1}^n \sum_{k,l=1}^n E_{i,j,k,l} \times\esp{\varphi_\delta(X_i^1,X_j^2) \varphi_\delta(X_k^1,X_l^2)},\\
\end{eqnarray*}
by independence between $\rperm$ and $\X_n$, where $$E_{i,j,k,l} = \esp{\pa{\1{\rperm(i)=j}-\frac{1}{n}} \pa{\1{\rperm(k)=l}-\frac{1}{n}}} = \esp{\1{\rperm(i)=j}\1{\rperm(k)=l}}-\frac{1}{n^2}.$$
On the one hand, for all $1\leq i,j,k,l\leq n$, the Cauchy-Schwarz inequality always ensures
\begin{equation}
\label{eq:maj_varphi_CS}
\esp{\varphi_\delta(X_i^1,X_j^2) \varphi_\delta(X_k^1,X_l^2)} \leq \sqrt{\esp{\varphi_\delta^2(X_i^1,X_j^2)} \esp{\varphi_\delta^2(X_k^1,X_l^2)}} \leq \esps{\indep}{\varphi_\delta^2} + \esps{P}{\varphi_\delta^2},
\end{equation}
since for all $1\leq i,j\leq n$, $\esp{\varphi_\delta^2(X_i^1,X_j^2)}\leq \esps{\indep}{\varphi_\delta^2} + \esps{P}{\varphi_\delta^2}$. 

On the other hand, remains to control the sum $(n-1)^{-2} \sum_{i,j=1}^n \sum_{k,l=1}^n E_{i,j,k,l}$. 
Three cases appear. 
\begin{description}
\item[1\up{st} case: ] If $i\neq k$ and $j\neq l$ (occurring $[n(n-1)]^2$ times), then $$E_{i,j,k,l} = \frac{1}{n(n-1)} - \frac{1}{n^2} = \frac{1}{n^2(n-1)}.$$ 

\item[2\up{nd} case: ] If [$i\neq k$ and $j=l$] or [$i=k$ and $j\neq l$], then $E_{i,j,k,l} = 0-1/n^2 \leq 0.$ 

\item[3\up{rd} case: ] If $i=k$ and $j=l$ (occurring $n(n-1)$ times), then $$E_{i,j,k,l} = \frac{1}{n} - \frac{1}{n^2} = \frac{n-1}{n^2}\leq \frac{1}{n}.$$ 
\end{description}

Therefore, 
\begin{eqnarray}
\frac{1}{(n-1)^2} \sum_{i,j=1}^n \sum_{k,l=1}^n E_{i,j,k,l} 
&\leq & \frac{1}{(n-1)^2}\pa{[n(n-1)]^2 \times\frac{1}{n^2(n-1)} + n(n-1) \times \frac{1}{n}} \nonumber \\
&\leq & \frac{2}{n-1} \nonumber \\
&\leq & \frac{4}{n}. \label{eq:maj_sum_Eijkl}
\end{eqnarray}

Finally, both \eqref{eq:maj_varphi_CS} and \eqref{eq:maj_sum_Eijkl} imply that 
\begin{equation}
\label{eq:esp_carre_2}
\esp{T_\delta\pa{\X_n^{\rperm}}^2} \leq \frac{4}{n}\pa{\esps{\indep}{\varphi_\delta^2} + \esps{P}{\varphi_\delta^2}},
\end{equation}

Therefore, combining \eqref{eq:maj_quantile_q_quantile_zeta}, \eqref{eq:quantile_zeta_1} and \eqref{eq:esp_carre_2} ends the proof of \eqref{eq:maj_quantile_du_quantile}.

\subsection{A first condition ensuing from Hoeffding's approach}
\label{sct:cond_Hoeffding52}

Back to the condition \eqref{eq:cond_simple_tchebychev} derived from Chebychev's inequality, both \eqref{eq:control_variance_gen_gros} and \eqref{eq:maj_quantile_du_quantile} imply that 
$$q^\alpha_{1-\beta/2} + \sqrt{\frac{2}{\beta} \var{T_\delta(\X_n)}} 
\leq \sqrt{\frac{2}{\beta}\frac{\pa{\esps{P}{\varphi_\delta^2} + \esps{\indep}{\varphi_\delta^2}}}{n}} \pa{2\sqrt{\frac{1-\alpha}{\alpha}}+ \sqrt{8}},$$
with $2\sqrt{(1-\alpha)/\alpha}+ \sqrt{8}\leq 4/\sqrt{\alpha}$, since $\sqrt{1-\alpha}+\sqrt{\alpha}\leq \sqrt{2}$.
Finally, the right-hand side of condition \eqref{eq:cond_simple_tchebychev} being upper bounded by 
$$\frac{4}{\sqrt{\alpha}}\sqrt{\frac{2}{\beta}\frac{\pa{\esps{P}{\varphi_\delta^2} + \esps{\indep}{\varphi_\delta^2}}}{n}}, $$ which is exactly the right-hand side of \eqref{eq:cond_Hoeffding}, this ensures the sufficiency of condition~\ref{eq:cond_Hoeffding} to control the second kind error rate by $\beta$.

\paragraph{Acknowledgement}

The author is grateful to Jean-François Coeurjolly for his insightful discussions. This work was supported in part by the French Agence Nationale de la Recherche (ANR 2011 BS01 010 01 projet Calibration). 

\bibliographystyle{alpha}

\end{document}